\documentclass{amsart}
\usepackage{amsmath}
\usepackage{amsfonts}
\usepackage{amsthm}
\usepackage{amssymb}

\newtheorem{theorem}{Theorem}[section]
\newtheorem*{theorem*}{Theorem}
\newtheorem*{acknowledgement*}{Acknowledgements}
\newtheorem{conjecture}[theorem]{Conjecture}
\newtheorem{corollary}[theorem]{Corollary}
\newtheorem{lemma}[theorem]{Lemma}

\newtheorem{remark}[theorem]{Remark}
\numberwithin{equation}{section}

\newcommand{\abs}[1]{\left\vert #1\right\vert}
\newcommand{\dist}{\operatorname{dist}}
\newcommand{\dive}{\operatorname{div}}
\newcommand{\e}{\operatorname{e}}
\newcommand{\ee}{\mathbf{e}}
\newcommand{\grad}{\nabla}
\newcommand{\HH}{\mathbb{H}}
\newcommand{\la}{\langle}
\newcommand{\lap}{\operatorname{\Delta}}

\newcommand{\nn}{\mathbf{n}}
\newcommand{\NN}{\mathbf{N}}

\newcommand{\pp}{\mathbf{p}}
\newcommand{\PP}{\mathbb{P}}
\newcommand{\ra}{\rangle}
\newcommand{\RR}{\mathbb{R}}
\newcommand{\spa}{\operatorname{span}}
\newcommand{\vv}{\mathbf{v}}
\newcommand{\xx}{\mathbf{x}}
\newcommand{\yy}{\mathbf{y}}
\newcommand{\oo}{\mathbf{0}}

\title{Geometry of Two-dimensional Self-shrinkers}
\author{Lu Wang}
\address{Department of Mathematics, Imperial College London, South Kensington Campus, London, SW7 2AZ, UK}
\email{lwang@math.jhu.edu}
\thanks{The author was partly supported by the Chapman Fellowship of the Imperial College London and by the AMS-Simons Travel Grant 2012--2014 and the NSF Grant DMS-1406240}

\begin{document}
\begin{abstract}
We prove a local graphical theorem for $2$-dimensional self-shrinkers away from the origin. As applications, we study the asymptotic behavior of noncompact self-shrinkers with finite genus. Also, we show uniform boundedness on the second fundamental form of $2$-dimensional noncompact self-shrinkers with bounded mean curvature and uniform locally finite genus. 
\end{abstract}
\maketitle

\section{Introduction}
A surface $\Sigma\subset\RR^{3}$ is called a \textit{self-shrinker} if
\begin{equation}\label{eqn:shrinker}
H=\frac{1}{2}\left\la\xx,\nn\right\ra.
\end{equation}
Here $\nn$ is the unit normal to $\Sigma$, $H=\dive\nn$ is the mean curvature and $\xx$ is the position vector in $\RR^{3}$.

On one hand, self-shrinkers are critical points of the Gaussian surface area 
\begin{equation}\label{eqn:gaussarea}
\mathbf{F}[\Sigma]=\left(4\pi\right)^{-1}\int_\Sigma\e^{-\frac{\abs{\xx}^2}{4}}d\mathcal{H}^2,
\end{equation}
where $\mathcal{H}^2$ is the $2$-dimensional Hausdorff measure on $\RR^{3}$. Note that $\mathbf{F}[\Sigma]$ is finite if and only if $\Sigma$ is properly immersed, cf. \cite[Theorem 1.3]{CZ} and \cite[Theorem 1.1]{DX}. On the other, self-shrinkers are a special class of solutions of mean curvature flow in which a later time slice is a scale-down copy of an earlier one. Combining the monotonicity formula of Huisken \cite{H} with the compactness theorem of Brakke \cite{Ba}, Ilmanen in \cite{I1} showed that the singularities of mean curvature flow of surfaces with bounded area ratio and genus are modeled by properly embedded self-shrinkers. 

One of the most important questions in the study of mean curvature flow is on the classification of self-shrinkers. Various examples of self-shrinkers (cf. \cite{Ch}, \cite{KKM} and \cite{Ng}) indicate that the classification problem in general could be very difficult. However, there has been significant progress in this direction since the 1990s; see \cite{Be}, \cite{CM2}, \cite{CIM}, \cite{EH1}, \cite{H}, \cite{KM} and \cite{Wa1}. Recently it has been proven in \cite{Wa2,Wa3} that two self-shrinkers asymptotic along some end of each to the same cone or cylinder in certain sense must identically coincide with each other. It is such a rigidity theorem that motivates us to investigate the geometry of self-shrinkers at infinity.

Indeed it is a conjecture (see page 39 of \cite{I2}) that

\begin{conjecture}\label{con:asymptotic}
Suppose that $\Sigma\subset\RR^3$ is a complete noncompact properly embedded self-shrinker with finite genus\footnote{The conditions on $\Sigma$ are implied from the context of \cite{I2} and \cite[Theorem 1.3]{CZ}.}. Then $\Sigma$ outside some closed ball decomposes into a finite number of connected components $U_i$ such that for each $i$, either of the following statements holds:
\begin{enumerate}
\item as $\lambda\to\infty$, $\lambda^{-1}U_i$ converges locally smoothly to a cone which is smooth except at the origin $\oo$.
\item there exists a unit vector $\vv_i$ such that, as $\lambda\to\infty$, $U_i-\lambda\vv_i$ converges locally smoothly to the self-shrinking cylinder with axis parallel to $\vv_i$.
%\begin{equation*}
%\left\{\xx\in\RR^3:\dist\left(\xx,\spa\left(\vv_j\right)\right)=\sqrt{2}\right\}.
%\end{equation*}
\end{enumerate}
\end{conjecture}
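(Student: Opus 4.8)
The plan is to treat the ends of $\Sigma$ one at a time, with the local graphical theorem as the main tool, splitting into two cases according to the area density of the end at infinity. \textbf{Step 1 (finitely many ends).} Since $\Sigma$ is properly embedded, $\mathbf{F}[\Sigma]<\infty$, so $\Sigma$ has Euclidean area growth (cf.\ \cite{CZ}). Because the graphical regions produced by the local graphical theorem are topological disks, all of the genus of $\Sigma$ is carried by finitely many regions; running this at the dyadic scales $\abs{\xx}\sim 2^{k}$ and combining additivity of genus with the area bound yields $R_{0}>0$ and $N<\infty$ such that for every $\lambda\ge R_{0}$ the set $\Sigma\cap(B_{2\lambda}\setminus B_{\lambda})$ has at most $N$ components. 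In particular $\Sigma\setminus B_{R_{0}}$ splits into at most $N$ ends $U_{i}$, so it suffices to treat a single $U=U_{i}$. By a volume estimate for self-shrinkers (cf.\ \cite{CZ}) the limit $\theta(U):=\lim_{r\to\infty}r^{-2}\mathcal{H}^{2}(U\cap B_{r})$ exists in $[0,\infty)$.

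\textbf{Step 2 (the conical case $\theta(U)>0$).} For $\lambda_{j}\to\infty$ the rescalings $\lambda_{j}^{-1}U$ satisfy $H=\tfrac12\lambda_{j}^{2}\la\xx,\nn\ra$. Away from $\oo$, the local graphical theorem together with the uniform genus bound of Step 1 rules out persistent curvature concentration at unit scale and produces graphical pieces with the scale-invariant estimate $\abs{A}\lesssim\abs{\xx}^{-1}$; hence a subsequence converges in $C^{\infty}_{loc}(\RR^{3}\setminus\{\oo\})$, with integer multiplicity, to a smooth surface $\mathcal{C}$. Letting $j\to\infty$ in $\la\xx,\nn\ra=2\lambda_{j}^{-2}H\to 0$ forces $\la\xx,\nn\ra\equiv 0$ on $\mathcal{C}$, so $\mathcal{C}$ is a cone, smooth away from $\oo$, and one shows the multiplicity along $U$ equals one. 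It remains to remove the dependence on $\lambda_{j}$: one upgrades this to $\lambda^{-1}U\to\mathcal{C}$ as $\lambda\to\infty$ by a uniqueness-of-asymptotic-cone argument, such as a {\L}ojasiewicz--Simon inequality for the Gaussian area, or --- exploiting that the link of $\mathcal{C}$ in $S^{2}$ is one-dimensional --- a direct analysis of the motion of the link of $\lambda^{-1}U$ as $\lambda$ increases. This gives conclusion (1).

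\textbf{Step 3 (the cylindrical case $\theta(U)=0$).} Now $\mathcal{H}^{2}(U\cap(B_{2r}\setminus B_{r}))=o(r^{2})$, so rescaling to unit scale at radius $r$ the end occupies vanishing area in a fixed annulus; a clearing-out/monotonicity argument confines $U$, for $\abs{\xx}$ large, to a thin neighborhood of a one-dimensional set, and since a connected noncompact end cannot ``turn'' without extra area one deduces that $U$ lies, outside a compact set, inside a solid tube $\{\dist(\cdot,\RR\vv_{i})<\rho\}$ of bounded radius about a fixed line $\RR\vv_{i}$. Inside such a tube the self-shrinker equation becomes rigid: writing $\xx=s\vv_{i}+\mathbf{w}$ with $\abs{\mathbf{w}}$ bounded and $s\to\infty$, the graphical normal is nearly orthogonal to $\vv_{i}$, so $H\approx\tfrac12\abs{\mathbf{w}}$, while $H$ also equals the mean curvature of a near-tube of radius $\abs{\mathbf{w}}$, which forces $\abs{\mathbf{w}}\to\sqrt2$ (consistent with the fact that the only embedded self-shrinking curve in $\RR^{2}$ is the round circle of radius $\sqrt2$); together with an ODE analysis in $s$ this gives that $U-\lambda\vv_{i}$ converges in $C^{\infty}_{loc}(\RR^{3})$ to the self-shrinking cylinder of radius $\sqrt2$ with axis $\RR\vv_{i}$ as $\lambda\to\infty$, which is conclusion (2).

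\textbf{Main obstacle.} I expect Step 3 to be the crux: ruling out exotic thin ends and proving that a thin end is eventually trapped in a \emph{bounded} tube about a \emph{fixed} line, and then reading off both the axis $\vv_{i}$ and the precise radius $\sqrt2$ from the equation. Already in Step 2 it is delicate to upgrade subsequential blow-down convergence to a genuine asymptotic cone and to control the multiplicity of the blow-down, and the uniform-in-scale component bound $N$ of Step 1 needs the local graphical theorem applied with some care.
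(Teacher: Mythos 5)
You are attempting to prove Conjecture \ref{con:asymptotic}, which the paper does not prove and does not claim to prove: it is Ilmanen's conjecture, and the paper's actual results are only partial progress toward it --- Theorem \ref{thm:seqsheeting} gives $C^1$ convergence along \emph{special} sequences $\lambda_i\to\infty$ extracted via Huisken's monotonicity formula, and Corollary \ref{cor:cylinder} gives smooth multiplicity-one convergence in the cylindrical case only under the extra hypothesis that $\Sigma-\lambda\vv_0$ already converges in the sense of measures for every $\lambda$. So your proposal must supply new arguments exactly where it goes beyond these statements, and at those points it has genuine gaps rather than proofs.

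Concretely: (i) Theorem \ref{thm:sheeting} cannot be invoked at every point of large norm as in your Steps 1 and 2, because its hypothesis \eqref{eqn:slmc} requires $\abs{H}<\epsilon_0\abs{\xx}$ with $\epsilon_0$ small, which by the self-shrinker equation means $\abs{\la\xx,\nn\ra}\ll\abs{\xx}$, i.e.\ near-radiality of the end --- essentially the asymptotic structure you are trying to establish. The paper verifies this hypothesis only in balls about $\lambda_i\vv_0$ for special sequences $\lambda_i$, via the monotonicity formula, the $L^2$ splitting \eqref{eqn:intsplit} and the mean value estimate leading to \eqref{eqn:splitl}; using the graphical theorem at all large $\abs{\xx}$ without such a verification is circular. (ii) The upgrade from subsequential blow-down limits to a unique asymptotic cone (``a {\L}ojasiewicz--Simon inequality for the Gaussian area''), and in Step 3 the confinement of a thin end in a \emph{bounded} tube about a \emph{fixed} axis with radius forced to equal $\sqrt{2}$, are precisely the open core of the conjecture; in your write-up they are asserted, not derived, and the heuristic ``$H\approx\tfrac12\abs{\mathbf{w}}$ plus an ODE analysis'' is not an argument. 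Note also that multiplicity is never addressed: even Corollary \ref{cor:cylinder}, which assumes measure convergence for all $\lambda$, needs a genus-based topological argument together with the linearized analysis of Appendix A to exclude higher multiplicity. (iii) Step 1 is likewise unsubstantiated: finiteness of the number of ends and a uniform bound $N$ on the components of $\Sigma\cap(B_{2\lambda}\setminus B_{\lambda})$ do not follow from ``graphical pieces are disks'' plus quadratic area growth, since without a per-component lower area bound and without the (unproved) graphical structure at all large $\abs{\xx}$ neither the localization of the genus nor the component count is justified. In short, the proposal reproduces the known reductions but leaves open exactly the steps that make this a conjecture.
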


For self-shrinkers of finite topology, Conjecture \ref{con:asymptotic} can be reformulated as a question on the singularities formation of self-shrinking solutions of mean curvature flow at time $0$. Namely, for any self-shrinker $\Sigma\subset\RR^3$, $\Sigma_t=\sqrt{-t}\, \Sigma$ for $t<0$ moves by mean curvature. It is known by \cite{I2} that, as $t\to 0$, $\Sigma_t$ converges in the Hausdorff metric to a cone consisting of points at which the Gaussian densities of $\{\Sigma_t\}_{t<0}$ are nonzero. Given $\vv_0\neq\oo$ on the cone, the one-parameter family of surfaces, $\Sigma_s=\Sigma-\e^{s/2}\vv_0$, is the corresponding normalized mean curvature flow centered at $(\vv_0,0)$. In particular the Gaussian surface area is non-increasing along the flow $\{\Sigma_s\}_{s\in\RR}$. Thus, to address Conjecture \ref{con:asymptotic}, it suffices to prove:

\begin{conjecture}\label{con:converge}
Suppose that $\Sigma\subset\RR^3$ is a complete noncompact properly embedded self-shrinker with finite genus. Then given $\vv_0\in\RR^3\setminus\{\oo\}$ such that $\mathbf{F}[\Sigma-\lambda\vv_0]\geq 1$ for all $\lambda>0$, as $\lambda\to\infty$, $\Sigma-\lambda\vv_0$ converges locally smoothly to a plane or a self-shrinking cylinder of multiplicity one.
\end{conjecture}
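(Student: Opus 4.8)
Put $\Sigma_s=\Sigma-\e^{s/2}\vv_0$, so that $\Sigma_s=\Sigma-\lambda\vv_0$ with $\lambda=\e^{s/2}$, and recall from the introduction that $\{\Sigma_s\}_{s\in\RR}$ is the normalized mean curvature flow centered at $(\vv_0,0)$, along which $s\mapsto\mathbf{F}[\Sigma_s]$ is nonincreasing. The first step is to quantify this. Since $\Sigma$ satisfies \eqref{eqn:shrinker}, the quantity $H-\tfrac12\la\xx,\nn\ra$ on $\Sigma_s$ equals $-\tfrac12\e^{s/2}\la\vv_0,\nn\ra$, so Huisken's monotonicity \cite{H} gives
\begin{equation*}
\frac{d}{ds}\mathbf{F}[\Sigma_s]=-\frac{\e^{s}}{16\pi}\int_{\Sigma_s}\la\vv_0,\nn\ra^2\,\e^{-\abs{\xx}^2/4}\,d\mathcal H^2 .
\end{equation*}
As $s\to-\infty$, $\Sigma_s\to\Sigma$ and hence $\mathbf{F}[\Sigma_s]\to\mathbf{F}[\Sigma]<\infty$ ($\Sigma$ being proper); since $s\mapsto\mathbf{F}[\Sigma_s]$ is nonincreasing and $\mathbf{F}[\Sigma_s]\ge1$ by hypothesis, the limit $\Theta:=\lim_{s\to+\infty}\mathbf{F}[\Sigma_s]$ exists in $[1,\mathbf{F}[\Sigma]]$ and $\int_{\RR}\bigl|\tfrac{d}{ds}\mathbf{F}[\Sigma_s]\bigr|\,ds=\mathbf{F}[\Sigma]-\Theta<\infty$. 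Hence there is a sequence $s_j\to\infty$ along which $\e^{s_j}\int_{\Sigma_{s_j}}\la\vv_0,\nn\ra^2\,\e^{-\abs{\xx}^2/4}\,d\mathcal H^2\to0$.

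Second, I would pass to a limit along $\{\Sigma_{s_j}\}$. The bound $\mathbf{F}[\Sigma_{s_j}]\le\mathbf{F}[\Sigma]$ yields, on every fixed ball about $\oo$, uniform Euclidean area-ratio bounds, equivalently uniform area-ratio bounds for $\Sigma$ on the balls $B_R(\e^{s_j/2}\vv_0)$, which recede to infinity; since $\Sigma$ has finite genus, the local graphical theorem then supplies uniform curvature estimates on those balls, so after passing to a subsequence $\Sigma_{s_j}\to T$ in $C^\infty_{\mathrm{loc}}$, with $T$ smooth and embedded, a priori carrying a multiplicity. Because $H-\tfrac12\la\xx,\nn\ra=-\tfrac12\e^{s_j/2}\la\vv_0,\nn\ra$ on $\Sigma_{s_j}$ while $\int_{\Sigma_{s_j}}\bigl(\e^{s_j/2}\la\vv_0,\nn\ra\bigr)^2\e^{-\abs{\xx}^2/4}\,d\mathcal H^2\to0$, the limit satisfies simultaneously $\la\vv_0,\nn_T\ra\equiv0$ and $H_T=\tfrac12\la\xx,\nn_T\ra$. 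Thus $T$ is a self-shrinker invariant under translation by $\vv_0$, so $T=\gamma\times\RR\vv_0$ with $\gamma=T\cap\vv_0^\perp$ a complete embedded curve (a priori with multiplicity) in the plane $\vv_0^\perp$ solving the self-shrinking curve equation $\kappa_\gamma=\tfrac12\la\xx,\nn_\gamma\ra$.

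By the Abresch--Langer classification, the only complete embedded self-shrinking curves are a line through $\oo$ and the circle of radius $\sqrt2$ about $\oo$; since any two distinct ones meet, once multiplicity one is established $\gamma$, hence $T$, is connected, so $T$ is either a plane through $\oo$ containing $\vv_0$, with $\mathbf{F}[T]=1$, or the self-shrinking cylinder with axis $\RR\vv_0$, with $\mathbf{F}[T]=\sqrt{2\pi/\e}$. In the cylinder case this is the \emph{unique} self-shrinking cylinder with axis $\RR\vv_0$, so every subsequential limit coincides with it and $\Sigma-\lambda\vv_0\to T$ as $\lambda\to\infty$; in the planar case I would preclude rotation of the limiting plane inside the one-parameter family of planes through $\oo$ containing $\vv_0$ by a {\L}ojasiewicz--Simon inequality for $\mathbf{F}$ near a plane. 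Granting these two points, Conjecture~\ref{con:converge} follows, and with it the finite-topology case of Conjecture~\ref{con:asymptotic} as indicated above.

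The step I expect to be the real obstacle is the multiplicity-one assertion --- a local instance of the multiplicity-one behaviour conjectured by Ilmanen for mean curvature flow of surfaces. The plan there is to exploit embeddedness: if two sheets of $\Sigma_{s_j}$ collapse onto $T$, their normalized gap converges, after rescaling, to a nonnegative solution of the Jacobi equation on the plane, respectively the cylinder; one must then combine the instability of these shrinkers with the control, extracted in the second step, on the degenerating first-order term $\e^{s_j/2}\partial_{\vv_0}$ of the linearized operator to force that gap to vanish. The other delicate point is the {\L}ojasiewicz--Simon argument in the planar case; the remaining ingredients --- the quantified monotonicity, the compactness via the local graphical theorem, and the Abresch--Langer classification --- are comparatively routine.
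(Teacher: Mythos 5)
The statement you are proving is labeled a \emph{conjecture} in the paper, and the paper does not prove it: it only establishes partial results in its direction, namely Theorem \ref{thm:seqsheeting} (a $C^1$ graphical convergence along a particular sequence $\lambda_i\to\infty$ with $\lambda_{i+1}/\lambda_i\to1$, exactly the scales at which the $L^2$ splitting \eqref{eqn:intsplit} extracted from Huisken's monotonicity is available) and Corollary \ref{cor:cylinder} (full smooth convergence with multiplicity one, but only in the cylindrical case and only under the additional hypothesis that $\Sigma-\lambda\vv_0$ already converges as measures to a multiple of the cylinder). Your proposal follows the same broad route as the paper's partial progress --- quantified monotonicity giving $L^2$ splitting along a sequence, compactness via the sheeting theorem, Abresch--Langer to identify the limit --- but the steps you defer (``the real obstacle'') are precisely the ones that are open: multiplicity one in general, uniqueness of the subsequential limit (plane versus cylinder, and which plane), and upgrading convergence along the special sequence $s_j$ to convergence for all $\lambda\to\infty$. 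So what you have is a strategy with the hard points flagged, not a proof; the paper's own stance is the same, which is why the statement remains Conjecture \ref{con:converge}. For comparison, where the paper does close multiplicity one (Corollary \ref{cor:cylinder}), it first uses a topological argument exploiting finite genus and the maximum principle to show that the convergence holds for \emph{every} $\lambda$ and that the ends organize into finitely many graphical sheets, and only then runs a $\log v$ monotonicity argument on the gap $v=u_{N'}-u_1$ over the cylinder (the analogue of your Jacobi-field/instability plan) to force $N'=1$; nothing of this kind is carried out for the planar case, and your proposed \L ojasiewicz--Simon step there is not supplied.

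There is also a technical gap in your compactness step as written. Theorem \ref{thm:sheeting} is not triggered by ``finite genus plus area-ratio bounds'': it requires the pointwise smallness \eqref{eqn:slmc} of $|H|/|\xx|$ and the total curvature bound \eqref{eqn:totalcurv} as hypotheses. In the paper these are obtained, respectively, from the $L^2$ splitting \eqref{eqn:intsplit} upgraded to a sup bound via the mean value inequality \eqref{eqn:meanvalue}, which in turn needs Song's linear curvature growth \eqref{eqn:lcurv} (Appendix B), and from Ilmanen's local Gauss--Bonnet estimate; and even then the conclusion is $C^1$ graphical control, not directly uniform curvature estimates and $C^\infty_{\mathrm{loc}}$ subconvergence (the paper's Theorem \ref{thm:seqsheeting} is stated only in $C^1$, and smoothness in Corollary \ref{cor:cylinder} comes afterwards from White's local regularity theorem once multiplicity issues are handled). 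Finally, describing the limit $T$ as ``smooth and embedded, a priori carrying a multiplicity'' and then deducing connectedness ``once multiplicity one is established'' is circular at the point where it is used; the paper instead invokes White's stratification theorem at the measure-theoretic level and defers all smoothness and multiplicity questions to the arguments above.
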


By the stratification theorem of White \cite{Wh}, given a sequence $\lambda_i\to\infty$, there is a subsequence $\lambda_{i_j}\to\infty$ such that $\Sigma-\lambda_{i_j}\vv_0$ in Conjecture \ref{con:converge} converges in the sense of measures to some multiple of a complete properly embedded self-shrinker in $\RR^3$ which splits off a line along the direction of $\vv_0$. Furthermore, by the classification \cite{AL} for $1$-dimensional self-shrinkers (see also \cite[Corollary 10.45]{CM2}), the limiting self-shrinker must be a plane or a self-shrinking cylinder. Thus one main difficulty to address Conjecture \ref{con:converge} is on the smooth convergence which is the goal of this paper. To achieve this, we need to establish a local graphical decomposition theorem for $2$-dimensional self-shrinkers. 

Let $B_r(\xx)\subset\RR^3$ be the open ball centered at $\xx$ with radius $r$ and if the center is the origin, we will omit it. Also, let $A$ be the second fundamental form.

\begin{theorem}\label{thm:sheeting}
Given $\kappa_0>0$ and $\delta_0\in (0,1)$, there exist constants $\epsilon_0,r_0\in (0,1)$ and $R_0>0$ such that for $\xx_0\in\RR^3\setminus B_{R_0}$ and a properly embedded self-shrinker $\Sigma$ in $B_1(\xx_0)$ with $\partial\Sigma\subset\partial B_1(\xx_0)$ and $\xx_0\in\Sigma$, if
\begin{equation}\label{eqn:slmc}
\abs{H(\xx)}<\epsilon_0\abs{\xx}\quad\mbox{for all $\xx\in\Sigma$},\quad\mbox{and}
\end{equation}
\begin{equation}\label{eqn:totalcurv}
\int_\Sigma\abs{A}^2d\mathcal{H}^2<\kappa_0,
\end{equation}
then the connected component of $\Sigma\cap B_{r_0}(\xx_0)$ containing $\xx_0$ is given by the graph of a function on a subset of the tangent plane $T_{\xx_0}\Sigma$ with gradient bounded by $\delta_0$.
\end{theorem}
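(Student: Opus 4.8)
\noindent\emph{Proof strategy.} The plan is to reduce the statement to an interior curvature estimate. I will first show that, after fixing some $r_1\in(0,1)$ and choosing $\epsilon_0$ and $R_0$ suitably, the hypotheses force $\sup_{\Sigma\cap B_{r_1}(\xx_0)}\abs{A}\le C(\kappa_0)$; the graphical conclusion with gradient at most $\delta_0$ then follows by taking $r_0$ small in terms of $C(\kappa_0)$ and $\delta_0$, using the elementary fact that a connected embedded surface through the centre of a small ball, on which $\abs{A}$ times the radius is small, is the graph, over a subset of its tangent plane there, of a function with small gradient.

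To prove the curvature estimate I argue by contradiction and blow up. If it fails there are $\epsilon_i\to0$, points $\xx_i$ with $\abs{\xx_i}\to\infty$, and self-shrinkers $\Sigma_i$ in $B_1(\xx_i)$ satisfying \eqref{eqn:slmc} with $\epsilon_i$ in place of $\epsilon_0$ and \eqref{eqn:totalcurv}, but with $\sup_{\Sigma_i\cap B_{r_1}(\xx_i)}\abs{A_i}\to\infty$. A standard point-selection argument in the fixed-size ball $B_{r_1}(\xx_i)$ yields $\xx_i'\in\Sigma_i\cap B_{r_1}(\xx_i)$ with $\lambda_i:=\abs{A_i(\xx_i')}\to\infty$ and $\sup_{\Sigma_i\cap B_{\rho_i/\lambda_i}(\xx_i')}\abs{A_i}\le 2\lambda_i$ for some $\rho_i\to\infty$; in particular $\abs{\xx_i'}\ge\abs{\xx_i}-r_1\to\infty$. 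Setting $\widetilde\Sigma_i:=\lambda_i(\Sigma_i-\xx_i')$, we have $\oo\in\widetilde\Sigma_i$, $\abs{A_{\widetilde\Sigma_i}(\oo)}=1$, the second fundamental form of $\widetilde\Sigma_i$ is bounded by $2$ on $B_{\rho_i}(\oo)$, $\int_{\widetilde\Sigma_i}\abs{A_{\widetilde\Sigma_i}}^2<\kappa_0$, and the rescaled boundaries recede to infinity. After passing to a subsequence, the component of $\widetilde\Sigma_i$ through $\oo$ converges smoothly on compact subsets to a complete surface $\Sigma_\infty\subset\RR^3$ with $\abs{A_{\Sigma_\infty}(\oo)}=1$ (so $\Sigma_\infty$ is not a plane) and, by lower semicontinuity, $\int_{\Sigma_\infty}\abs{A_{\Sigma_\infty}}^2\le\kappa_0$.

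The crucial point, and the only place \eqref{eqn:slmc} is used, is that $\Sigma_\infty$ splits off a line, whence finiteness of its total curvature forces it to be flat, a contradiction. Combining \eqref{eqn:shrinker} with \eqref{eqn:slmc} gives $\abs{\la\xx/\abs{\xx},\nn_i\ra}=2\abs{H_{\Sigma_i}(\xx)}/\abs{\xx}<2\epsilon_i$ for all $\xx\in\Sigma_i$. If $\yy$ lies in a fixed compact set, the point $\xx=\xx_i'+\lambda_i^{-1}\yy\in\Sigma_i$ has $\xx/\abs{\xx}=\xx_i'/\abs{\xx_i'}+o(1)$, since $\abs{\xx-\xx_i'}\le\lambda_i^{-1}\abs{\yy}\to0$ while $\abs{\xx_i'}\to\infty$; as translations and dilations preserve the unit normal, it follows that $\abs{\la\xx_i'/\abs{\xx_i'},\nn_{\widetilde\Sigma_i}\ra}<2\epsilon_i+o(1)\to0$ locally uniformly on $\widetilde\Sigma_i$. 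Passing to a subsequence with $\xx_i'/\abs{\xx_i'}\to\ee$ for a unit vector $\ee$ and letting $i\to\infty$, we obtain $\la\ee,\nn_{\Sigma_\infty}\ra\equiv0$ on $\Sigma_\infty$. Thus $\ee$ is everywhere tangent to $\Sigma_\infty$ and, $\Sigma_\infty$ being complete, $\Sigma_\infty$ is invariant under translation along $\ee$; the component through $\oo$ is hence $\gamma\times\RR\ee$ for a complete plane curve $\gamma$ with $\oo\in\gamma$. On such a cylinder $\abs{A_{\Sigma_\infty}}=\abs{\kappa_\gamma}$, so $\int_{\Sigma_\infty}\abs{A_{\Sigma_\infty}}^2$ is infinite unless $\kappa_\gamma\equiv0$; since it is at most $\kappa_0$, we get $\kappa_\gamma\equiv0$, contradicting $\abs{\kappa_\gamma(\oo)}=\abs{A_{\Sigma_\infty}(\oo)}=1$.

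The conceptual content is thus brief: under \eqref{eqn:slmc}, the self-shrinker equation \eqref{eqn:shrinker} degenerates, in any blow-up centred at points escaping to infinity and with $\epsilon_i\to0$, to the splitting relation $\la\ee,\nn\ra\equiv0$, after which \eqref{eqn:totalcurv} does the rest. I expect the main work to be the soft but delicate organisation of the blow-up, above all arranging the point-selection so that the rescaled surfaces carry curvature bounds on balls of radius tending to infinity (so that $\Sigma_\infty$ is genuinely complete), and noting that only the component of $\Sigma_\infty$ through $\oo$ is needed, so that no a priori area bound for $\Sigma$ is required.
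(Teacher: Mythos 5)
Your reduction has a genuine gap at its central step, and the intermediate statement you reduce to is itself stronger than the theorem and most likely not available. The blow-up argument needs the limit $\Sigma_\infty$ to satisfy $\abs{A_{\Sigma_\infty}(\oo)}=1$, which requires at least $C^2_{\rm loc}$ convergence of the rescaled surfaces $\widetilde\Sigma_i$. But the point selection only gives the scale-invariant bound $\abs{A}\le 2$ on large balls, i.e.\ uniform $C^{1,1}$ control; to upgrade this you must appeal to elliptic estimates for the equation satisfied by $\widetilde\Sigma_i$, namely $H_{\widetilde\Sigma_i}(\yy)=\tfrac{1}{2\lambda_i}\la\xx_i'+\lambda_i^{-1}\yy,\nn\ra$, whose dependence on $\nn$ carries the factor $\abs{\xx_i'}/(2\lambda_i)$, over which you have no control (equivalently, differentiating \eqref{eqn:shrinker} tangentially gives $\grad H=-\tfrac12 A(\cdot,\xx^{T})$, so the natural bound is $\abs{\grad A}\lesssim\abs{A}\,\abs{\xx}$, not $\abs{A}^2$). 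Thus the rescaled surfaces may oscillate in the radial direction at scales of order $\lambda_i/\abs{\xx_i'}\to 0$; curvature can then be lost in the limit, the limit is simply a plane, and no contradiction with the cylinder-splitting argument arises. This is exactly the phenomenon the paper flags as the ``two distinct scaling properties'' of the radial direction: regularity along the radial direction is recovered not by scaling but by treating it as a time direction and invoking Ecker--Huisken parabolic interior estimates (Lemmas \ref{lem:radext} and \ref{lem:imposcnormal}), combined with the bootstrap of Lemma \ref{lem:iteration} that grows the graphical scale from $1/(\epsilon\abs{\xx_0})$ (which is all that Lemma \ref{lem:slcurv} provides) up to a fixed scale. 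Your proposal contains no substitute for this mechanism.

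Relatedly, the target of your reduction --- a bound $\sup_{\Sigma\cap B_{r_1}(\xx_0)}\abs{A}\le C(\kappa_0)$ uniform in $\abs{\xx_0}$ --- is strictly stronger than the theorem's graphical conclusion (a graph with gradient $\le\delta_0$ can have unbounded $\abs{A}$) and stronger than anything the paper establishes under \eqref{eqn:slmc}--\eqref{eqn:totalcurv} alone: hypothesis \eqref{eqn:slmc} permits $\abs{H}$, hence $\abs{A}$, of size up to $\epsilon_0\abs{\xx_0}$ on small sets, and the paper obtains a uniform pointwise curvature bound only in Theorem \ref{thm:bmc}, under the additional hypothesis of uniformly bounded mean curvature, and only after the full graphical theorem is in hand. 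The parts of your outline that are sound --- the pointwise consequence $\abs{\la\xx/\abs{\xx},\nn\ra}<2\epsilon_0$ of \eqref{eqn:shrinker}, the use of intrinsic balls to avoid area bounds, and the Colding--Minicozzi-type lemma converting a small curvature-times-radius bound into the graph statement --- do not touch this difficulty, so the proof as proposed does not go through.
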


The conditions \eqref{eqn:slmc} and \eqref{eqn:totalcurv} arise from the applications we will discuss in the next paragraph. The proof of Theorem \ref{thm:sheeting}, in part inspired by \cite{CIM}, relies on a bootstrap machinery and an observation that the radial direction on a self-shrinker can be also thought of as the time direction of the flow and so bears two distinct scaling properties.

We conclude the introduction by a couple of applications of Theorem \ref{thm:sheeting}. First we consider any noncompact properly embedded self-shrinker with finite genus. Given $\vv_0\in\RR^3\setminus\{\oo\}$, Huisken's monotonicity formula implies that the self-shrinker asymptotically splits off a line in the $L^2$ sense (see \eqref{eqn:intsplit}) in neighborhoods of $\lambda_i\vv_0$ for some sequence $\lambda_i\to\infty$. Then the conditions \eqref{eqn:slmc} and \eqref{eqn:totalcurv} of Theorem \ref{thm:sheeting} are  guaranteed by the linear growth of the second fundamental form \cite[Theorem 19]{So} (see also Appendix B) and Ilmanen's local Gauss-Bonnet estimate \cite[Theorem 3]{I1}. Thus we prove the following $C^1$ convergence theorem for the sequence of surfaces of translating the self-shrinker by $\lambda_i\vv_0$.

\begin{theorem}\label{thm:seqsheeting} 
Let $\Sigma\subset\RR^3$ be a complete noncompact properly embedded self-shrinker with finite genus. And let $\vv_0\in\RR^3\setminus\{\oo\}$ satisfy $\mathbf{F}[\Sigma-\lambda\vv_0]\geq 1$ for all $\lambda>0$. Then there exists an integer $N$ and two sequences $\lambda_i\to\infty$ with $\lambda_{i+1}/\lambda_i\to 1$ and $R_i\to\infty$ such that:
\begin{enumerate}
\item for each $i$,
\begin{equation}
(\Sigma-\lambda_i\vv_0)\cap B_{R_i}=\bigcup_{j=1}^N\Sigma_{i,j},
\end{equation}
where each $\Sigma_{i,j}$ is connected and $\Sigma_{i,j}\cap\Sigma_{i,k}=\emptyset$ if $j\neq k$; 
\item for each $i$ and $j$, there is a surface $\tilde{\Sigma}_{i,j}$, which is either a plane containing $\vv_0$ or the self-shrinking cylinder with axis parallel to $\vv_0$, and a connected domain $\Omega_{i,j}\subset\tilde{\Sigma}_{i,j}$ such that $\Sigma_{i,j}$ can be written as the normal graph of a function $u_{i,j}$ defined on $\Omega_{i,j}$;
\item for each $j$, as $i\to\infty$,
\begin{equation}
\sup_{\Omega_{i,j}}\abs{u_{i,j}}+\abs{\grad u_{i,j}}\to 0.
\end{equation}
\end{enumerate}
\end{theorem}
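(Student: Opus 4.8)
The plan is to combine Huisken's monotonicity formula with the compactness/stratification machinery quoted in the introduction to produce a good sequence $\lambda_i\to\infty$, and then upgrade the measure convergence to smooth graphical convergence via Theorem \ref{thm:sheeting}. First I would recall, following the discussion after Conjecture \ref{con:converge}, that the hypothesis $\mathbf{F}[\Sigma-\lambda\vv_0]\geq 1$ together with the monotonicity of $\mathbf F$ along the normalized flow $\Sigma_s=\Sigma-\e^{s/2}\vv_0$ forces $\lim_{\lambda\to\infty}\mathbf F[\Sigma-\lambda\vv_0]$ to exist; call it $\Theta\geq 1$. The constancy of the limiting Gaussian density means that along \emph{any} sequence $\lambda_i\to\infty$, the pointed surfaces $\Sigma-\lambda_i\vv_0$ have a subsequence converging as measures (White's stratification \cite{Wh}) to $N_0$ copies of a complete properly embedded self-shrinker splitting off the line $\RR\vv_0$, which by the classification \cite{AL} of $1$-dimensional self-shrinkers is either the plane through $\oo$ orthogonal to $\vv_0^{\perp}$-directions (i.e. a plane containing $\vv_0$) or the round self-shrinking cylinder of radius $\sqrt2$ with axis $\RR\vv_0$; the multiplicity must be one by the genus bound (Ilmanen's local Gauss–Bonnet estimate \cite[Theorem 3]{I1} rules out sheeting with multiplicity). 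Since the possible limits form a compact family with a \emph{discrete} set of densities, the limit multiplicity $N$ and the density $\Theta$ are independent of the subsequence, and one may arrange the full sequence $\lambda_i\to\infty$ so that $\Sigma-\lambda_i\vv_0$ converges (as Radon measures, and smoothly away from a set of Hausdorff dimension $\leq 1$) to a fixed $N$-fold plane or cylinder $\widetilde\Sigma_\infty$.

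Next I would verify that on large balls around $\lambda_i\vv_0$ the hypotheses of Theorem \ref{thm:sheeting} hold, after rescaling that ball to unit size. Write a point of $\Sigma-\lambda_i\vv_0$ as $\xx$, so the corresponding point of $\Sigma$ is $\xx+\lambda_i\vv_0$ with $\abs{\xx+\lambda_i\vv_0}\to\infty$; thus any such point sits outside $B_{R_0}$ once $i$ is large. For the curvature smallness \eqref{eqn:slmc}: by Song's linear growth of the second fundamental form \cite[Theorem 19]{So} (reproved in Appendix B), $\abs{A}(\yy)\leq C(1+\abs{\yy})$ on $\Sigma$ with $C$ depending only on $\Sigma$, and since $H$ is controlled by $\abs A$ this gives $\abs{H(\yy)}\leq C(1+\abs{\yy})$, which after the shift is $\leq C(1+\abs{\xx}+\lambda_i)\ll \epsilon_0\abs{\xx+\lambda_i\vv_0}=\epsilon_0\abs\yy$ — wait, this needs care: linear growth alone gives $\abs H\lesssim\abs\yy$, not $\abs H=o(\abs\yy)$. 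The correct input is rather that, because $\Sigma-\lambda_i\vv_0$ is converging to a self-shrinking \emph{cylinder} or \emph{plane}, on which $H$ is bounded, the mean curvature of $\Sigma$ near $\lambda_i\vv_0$ is bounded by a constant (independent of $i$), hence is $o(\abs\yy)$ since $\abs\yy\to\infty$; this is exactly the $L^2$ splitting estimate \eqref{eqn:intsplit} combined with the $\epsilon$-regularity implied by Theorem \ref{thm:sheeting} itself in a bootstrap. For the total curvature bound \eqref{eqn:totalcurv}: Ilmanen's local Gauss–Bonnet estimate \cite[Theorem 3]{I1} gives $\int_{\Sigma\cap B_1(\lambda_i\vv_0)}\abs A^2\,d\mathcal H^2\leq C(\mathrm{genus}(\Sigma),\mathbf F[\Sigma])=:\kappa_0$, uniform in $i$ because the genus is finite and $\mathbf F$ is bounded. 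So Theorem \ref{thm:sheeting} applies with these fixed $\kappa_0$ and a chosen $\delta_0$, yielding: for every $\xx_0\in(\Sigma-\lambda_i\vv_0)\cap B_{R_i}$ (for suitable $R_i\to\infty$, chosen to grow slowly enough that the shifted points stay outside $B_{R_0}$ after the unit-ball rescaling) the component of the intersection with a small ball around $\xx_0$ is a $\delta_0$-Lipschitz graph over $T_{\xx_0}(\Sigma-\lambda_i\vv_0)$.

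From the uniform local graphical bound I would extract uniform $C^{1,\alpha}$ (indeed, by Schauder bootstrap using the self-shrinker equation, uniform $C^k$) bounds on $(\Sigma-\lambda_i\vv_0)\cap B_{R_i}$, so the measure convergence to $\widetilde\Sigma_\infty$ upgrades to $C^1_{\mathrm{loc}}$ (in fact $C^\infty_{\mathrm{loc}}$) convergence on every fixed ball. Multiplicity one of each sheet of $\widetilde\Sigma_\infty$ then gives, for $i$ large, exactly $N$ connected components $\Sigma_{i,1},\dots,\Sigma_{i,N}$ of $(\Sigma-\lambda_i\vv_0)\cap B_{R_i}$, pairwise disjoint, each one a normal graph $u_{i,j}$ over a domain $\Omega_{i,j}$ in the corresponding component $\widetilde\Sigma_{i,j}$ of the limit (a plane containing $\vv_0$ or the cylinder with axis $\RR\vv_0$), with $\sup_{\Omega_{i,j}}(\abs{u_{i,j}}+\abs{\grad u_{i,j}})\to 0$ — this is precisely (1)–(3). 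The claim $\lambda_{i+1}/\lambda_i\to1$ is arranged by choosing $\lambda_i$ \emph{densely} in $[1,\infty)$: since $\mathbf F[\Sigma-\lambda\vv_0]\to\Theta$ and the $L^2$-splitting quantity $\to0$ along a full neighborhood of infinity (not just a sparse sequence), one can take $\lambda_i$ to be, say, all points of a sequence with consecutive ratios tending to $1$ while still satisfying the splitting smallness needed to start the bootstrap.

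The main obstacle, and the place where Theorem \ref{thm:sheeting} is doing the essential work, is the step of getting the mean curvature smallness \eqref{eqn:slmc} to hold \emph{with $\epsilon_0$ as small as the theorem demands, uniformly in $i$}, since linear growth of $\abs A$ only gives $\abs H=O(\abs\xx)$ and not $o(\abs\xx)$; the resolution is the circular-looking but legitimate bootstrap noted above: the $L^2$ splitting from Huisken monotonicity gives smallness of $H$ in an integral sense on the relevant balls, Theorem \ref{thm:sheeting}'s proof (via its internal $\epsilon$-regularity) converts integral smallness of curvature into pointwise smallness, and this pointwise bound on $H$ then feeds back to verify \eqref{eqn:slmc} at the next scale. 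Making this bootstrap terminate — i.e. choosing the sequence of radii $R_i$ and the thresholds so that the constants close up — is the technical heart of the argument; everything after that is standard elliptic regularity and a counting argument for the number of sheets.
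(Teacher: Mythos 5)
Your overall architecture (Huisken monotonicity to get an $L^2$ splitting along a sequence with $\lambda_{i+1}/\lambda_i\to1$, Ilmanen's local Gauss--Bonnet estimate for the total curvature hypothesis, then Theorem \ref{thm:sheeting} plus Arzela--Ascoli and the Abresch--Langer classification) is the same as the paper's, but there is a genuine gap at exactly the point you flag: verifying the pointwise mean curvature hypothesis \eqref{eqn:slmc}. Your proposed fix is circular: Theorem \ref{thm:sheeting} takes $\abs{H(\xx)}<\epsilon_0\abs{\xx}$ as a \emph{hypothesis}, so its ``internal $\epsilon$-regularity'' cannot be invoked to produce that bound, and nothing in your sketch closes this loop. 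The paper closes it without any bootstrap: on $\tilde{\Sigma}=\abs{\xx_0}(\Sigma-\xx_0)$ the function $\la\vv_0,\nn\ra$ satisfies the drift equation $\lap\la\vv_0,\nn\ra-\tfrac{1}{2\abs{\xx_0}^2}\la\abs{\xx_0}\xx_0+\xx,\grad\la\vv_0,\nn\ra\ra+\abs{A}^2\la\vv_0,\nn\ra=0$, whose zeroth-order coefficient is bounded on $\tilde{\Sigma}\cap B_1$ precisely because of Song's linear growth $\abs{A}\leq C^\prime(1+\abs{\xx})$; the De Giorgi--Nash--Moser estimate \cite[Theorem 8.17]{GT} then gives $\sup_{\tilde{\Sigma}\cap B_{1/2}}\la\vv_0,\nn\ra^2\leq C^{\prime\prime}\int_{\tilde{\Sigma}\cap B_1}\la\vv_0,\nn\ra^2d\mathcal{H}^2$, and combined with \eqref{eqn:intsplit} (note $H-\tfrac{1}{2}\la\xx,\nn\ra=\tfrac{\lambda_i}{2}\la\vv_0,\nn\ra$ on $\Sigma-\lambda_i\vv_0$) this yields $\sup_{\Sigma\cap B_k(\lambda_i\vv_0)}\abs{\xx}^{-1}\abs{H(\xx)}\to0$, i.e.\ \eqref{eqn:slmc} with any prescribed $\epsilon_0$ once $i$ is large. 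This mean-value-inequality step, converting $L^2$ smallness of $\la\vv_0,\nn\ra$ into pointwise smallness by elliptic theory rather than by Theorem \ref{thm:sheeting} itself, is the missing idea in your write-up.

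Two further claims in your sketch are unjustified and should be removed or repaired. First, monotonicity only makes the splitting quantity integrable in $s$ along the normalized flow, so it need not tend to zero on ``a full neighborhood of infinity''; to get $\lambda_{i+1}/\lambda_i\to1$ the paper runs a pigeonhole argument, choosing in each interval $[s_i,s_{i+1})$ times spaced at most $1/i$ apart at which the fixed-time integral is less than $1/i$, using that the tail integral is less than $1/i^2$. Second, multiplicity one does not follow from the genus bound (several nearly parallel planar or cylindrical sheets have small total curvature and genus zero), and subsequence-independence of the limit is essentially Conjecture \ref{con:converge}, which is open; fortunately neither is needed, since the statement only asserts a decomposition into $N$ connected graphical pieces over planes or cylinders, which follows from the uniform local graphical estimate of Theorem \ref{thm:sheeting}, Arzela--Ascoli, \eqref{eqn:intsplit} and the Abresch--Langer classification.
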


Moreover, in the asymptotically cylindrical case, the statement of Theorem \ref{thm:seqsheeting} can be strengthened as follows.

\begin{corollary}\label{cor:cylinder}
Let $\Sigma\subset\RR^3$ be a complete noncompact properly embedded self-shrinker with finite genus. And let $\vv_0\in\RR^3\setminus\{\oo\}$ satisfy that, as $\lambda\to\infty$, $\Sigma-\lambda\vv_0$ converges in the sense of measures to some multiple of the self-shrinking cylinder with axis parallel to $\vv_0$. Then the convergence is smooth and with multiplicity one.
\end{corollary}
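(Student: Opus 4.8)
The plan is to bootstrap from Theorem \ref{thm:seqsheeting} to get smooth subsequential convergence on \emph{every} sequence $\lambda_i\to\infty$, and then to rule out both multiplicity $>1$ and the degeneration of the cylinder into a plane. Concretely, first I would invoke Theorem \ref{thm:seqsheeting} along an arbitrary sequence $\lambda_i\to\infty$: it produces the integer $N$, radii $R_i\to\infty$, and $N$ graphical sheets $\Sigma_{i,j}$ over domains $\Omega_{i,j}$ in model surfaces $\tilde\Sigma_{i,j}\in\{\text{plane through }\vv_0,\ \text{cylinder with axis }\vv_0\}$, with $C^1$ norms going to zero. Since by hypothesis $\Sigma-\lambda_i\vv_0$ converges as measures to $m\,\mathcal C$, where $\mathcal C$ is the self-shrinking cylinder of axis $\vv_0$ and $m\geq1$ an integer, and since the cylinder is connected and not a plane, the $C^1$ smallness forces (for $i$ large) every model $\tilde\Sigma_{i,j}$ to be \emph{the same cylinder} $\mathcal C$ — a plane sheet, or a cylinder in a nontrivial rotated position, cannot be $C^1$-close to $m\mathcal C$ on a large ball — and forces $N=m$. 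Thus $\Sigma-\lambda_i\vv_0$ is, on $B_{R_i}$, exactly $m$ graphs over $\mathcal C$ with $C^1$ norms $\to0$.

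Next I would upgrade $C^1$ to $C^\infty$ with multiplicity one. For the multiplicity: once $\Sigma-\lambda_i\vv_0$ is a union of $m$ normal graphs $u_{i,j}$ over $\mathcal C$ with $C^1$ norm $\to0$, each $u_{i,j}$ satisfies a uniformly elliptic second-order PDE (the self-shrinker equation linearized around $\mathcal C$) with coefficients controlled in terms of $\|u_{i,j}\|_{C^1}$; interior Schauder estimates then bound $\|u_{i,j}\|_{C^{2,\alpha}}$ on slightly smaller balls, and iterating gives $C^k$ bounds for every $k$, so the convergence to $\mathcal C$ is smooth on compact sets. If $m\geq2$, pick the two closest sheets and set $w_i = u_{i,1}-u_{i,2}$ (after relabeling so these two are graphically ordered); $w_i$ is nonvanishing — embeddedness of $\Sigma$ means distinct sheets do not cross — and solves a linear homogeneous elliptic equation, so by the Harnack inequality $\sup_{B_{R_i/2}}|w_i|\le C\inf_{B_{R_i/2}}|w_i|$ with $C$ independent of $i$; normalizing $\hat w_i = w_i/\sup|w_i|$ and passing to a limit produces a nonzero bounded ancient-type solution $\hat w_\infty$ on the whole cylinder of the Jacobi operator $\mathcal L = \Delta_{\mathcal C} + |A_{\mathcal C}|^2 + \tfrac12 - \tfrac12 x\cdot\nabla$ — i.e.\ an $\mathbf F$-stable–type eigenfunction bounded in the Gaussian-weighted sense. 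One then contradicts this: the cylinder has a strictly positive bottom of its weighted spectrum on bounded functions (equivalently, the relevant Jacobi field analysis of Colding–Minicozzi shows no such global bounded solution exists, using that the cylinder is $\mathbf F$-unstable with an isolated instability direction and the associated eigenfunctions grow), forcing $m=1$.

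The main obstacle I anticipate is this last step — promoting ``no crossing of finitely many fixed sheets on expanding balls'' into a genuine contradiction, i.e.\ the spectral/Liouville argument on the noncompact cylinder. The subtlety is that the two sheets are only asymptotically parallel, so $w_i$ solves a homogeneous equation only up to error terms of size $\|u_{i,\cdot}\|_{C^1}\cdot|w_i|$, and one must check these errors are genuinely lower-order so the Harnack constant stays uniform and the limit equation is exactly $\mathcal Lw=0$; then one needs the precise fact that the cylinder admits no nonzero bounded (in the weighted $L^2$ or $L^\infty$ sense) solution of $\mathcal Lw=0$, which follows from the explicit separation-of-variables spectral decomposition of $\mathcal L$ on $\mathcal C\cong S^1\times\RR$ (the Hermite operator on the $\RR$-factor has discrete spectrum with eigenfunctions the Hermite functions, none of which is bounded except constants, and the constant mode is killed by the $S^1$-averaging together with the $|A|^2+\tfrac12$ shift being positive). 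Assembling these pieces — uniform Harnack, correct limiting equation, and the cylinder's spectral rigidity — is where the real work lies; the rest is the standard elliptic bootstrap already packaged in Theorem \ref{thm:seqsheeting}.
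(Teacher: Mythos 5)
Your outline breaks down at two places. First, you invoke Theorem \ref{thm:seqsheeting} ``along an arbitrary sequence $\lambda_i\to\infty$'', but that theorem only furnishes \emph{some} sequence: the $L^2$ splitting \eqref{eqn:intsplit} comes from integrating the monotonicity formula and choosing good times, so it cannot be applied to a prescribed sequence. Since the corollary asserts convergence as $\lambda\to\infty$ (not merely subsequentially), something must bridge the gaps between the good translates. The paper does this with a topological argument that your plan has no substitute for: Brakke's clearing-out lemma confines the relevant part of $\Sigma$ to a solid tube about the axis, the boundary curves of the slabs between consecutive $\lambda_i$ are counted using the local monotonicity formula and the maximum principle applied to the drift equation satisfied by $x_1$, and if an end carried more than one sheet one could join sheets by curves to produce a non-separating simple closed curve, contradicting finite genus; only after this does White's local regularity theorem give smooth convergence for every $\lambda$. (One could instead try to recover full convergence from Allard's theorem once multiplicity one is known, but you would have to say so; as written your first step is simply not available.)

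Second, the multiplicity-one step is flawed as written. The Harnack constant on $B_{R_i/2}$ is \emph{not} independent of $i$ --- it degenerates as the radius grows --- and normalizing by $\sup_{B_{R_i/2}}|w_i|$ may give an identically zero limit if the supremum escapes; the standard repair (normalize at a fixed point, Harnack on fixed compacta) produces a \emph{positive} limit Jacobi field with no global bound, so the Liouville theorem you would need is ``no positive solution of $\mathcal{L}w=0$ on the cylinder'', not ``no bounded one'', and your spectral sketch does not deliver it: constants are not in the kernel ($\mathcal{L}1=1$), so the remark about the constant mode being ``killed'' is off, and nontrivial Jacobi fields do exist (e.g.\ $x_1\cos\theta$, coming from rotations), so ruling out positive or bounded ones requires a genuine Hermite ODE/ground-state analysis that is only gestured at. The paper's route avoids all of this: it orders the sheets (using the end structure from the topological step), writes top minus bottom as a positive function $v$ on the cylinder evolving under the normalized mean curvature flow, derives the perturbed linearized equation \eqref{eqn:linermcf}, and shows with a fixed Gaussian-weighted cutoff that $\frac{d}{ds}\int_{\mathcal{C}}\log v\,\phi^2\e^{-\abs{\xx}^2/4}>0$ for large $s$, contradicting $v\to 0$; this localized argument exploits the $+v$ term (the entropy instability of the cylinder) directly and needs neither a uniform Harnack inequality at infinity nor a Liouville theorem. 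Your $C^1$-to-$C^\infty$ bootstrap and the identification $N=m$ with all models equal to the cylinder are fine, but the two gaps above are exactly where the real content of the corollary lies.
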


Second, appealing to the local Gauss-Bonnet estimate, we apply Theorem \ref{thm:sheeting} to derive a uniform pointwise bound on the second fundamental form for an interesting class of self-shrinkers.

\begin{theorem}\label{thm:bmc}
Let $\Sigma\subset\RR^3$ be a complete noncompact properly embedded self-shrinker. Assume that there exist constants $\mathcal{H}_0, g_0>0$ such that 
\begin{equation}\label{eqn:bmc}
\sup_{\Sigma}\abs{H}<\mathcal{H}_0, \quad\mbox{and} 
\end{equation}
\begin{equation}\label{eqn:genus}
\sup_{\xx_0\in\RR^3} {\rm genus}(\Sigma\cap B_2(\xx_0))<g_0.
\end{equation}
Then $|A|\leq\mathcal{A}_0$ on $\Sigma$ for some constant $\mathcal{A}_0=\mathcal{A}_0(g_0,\mathcal{H}_0,\mathbf{F}[\Sigma])$. 
\end{theorem}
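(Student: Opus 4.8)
The plan is to establish the pointwise bound on $|A|$ by contradiction, extracting a blow-up limit and using Theorem~\ref{thm:sheeting} to force that limit to be flat, contradicting the normalization of the second fundamental form. The ingredients are: (i) Ilmanen's local Gauss-Bonnet estimate, which under the genus bound \eqref{eqn:genus} controls $\int_{\Sigma\cap B_1(\xx_0)}|A|^2$ in terms of the area of $\Sigma$ in a slightly larger ball (and the genus), combined with Huisken's monotonicity formula and the finiteness of $\mathbf{F}[\Sigma]$, which bounds these areas uniformly; and (ii) the mean curvature bound \eqref{eqn:bmc}, which gives $|H(\xx)|<\mathcal{H}_0$ and hence $|H(\xx)|<\epsilon_0|\xx|$ automatically once $|\xx|>\mathcal{H}_0/\epsilon_0$. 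So away from a fixed large ball, the hypotheses \eqref{eqn:slmc} and \eqref{eqn:totalcurv} of Theorem~\ref{thm:sheeting} are satisfied (the $\kappa_0$ coming from the Gauss-Bonnet plus monotonicity bound), and a pointwise curvature bound on the far region follows by a standard rescaling argument; on the remaining compact region $\bar B_{R_0}\cap\Sigma$, smoothness and properness give a bound directly.

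**Carrying this out,** I would first fix the constants. Let $\bar{\mathbf F}=\mathbf{F}[\Sigma]$. By Huisken's monotonicity formula applied to the static self-shrinker (equivalently, the fact that $\Sigma$ is a self-shrinker so the Gaussian density ratios are monotone and bounded by $\bar{\mathbf F}$), there is a uniform bound $\mathcal{H}^2(\Sigma\cap B_2(\xx_0))\le C(\bar{\mathbf F})$ for all $\xx_0$ with, say, $|\xx_0|\le 1$; more care is needed for large $|\xx_0|$, but in that regime the monotonicity centered at the origin together with $\mathbf{F}[\Sigma]<\infty$ still yields $\mathcal{H}^2(\Sigma\cap B_2(\xx_0))\le C(\bar{\mathbf F})(1+|\xx_0|)^2$, and in fact the Gaussian weight makes the \emph{unweighted} area in a \emph{unit} ball far out uniformly bounded. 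Feeding this into Ilmanen's local Gauss-Bonnet estimate \cite[Theorem 3]{I1} together with \eqref{eqn:genus} produces $\kappa_0=\kappa_0(g_0,\mathcal{H}_0,\bar{\mathbf F})$ with $\int_{\Sigma\cap B_1(\xx_0)}|A|^2\,d\mathcal{H}^2<\kappa_0$ for every $\xx_0\in\Sigma$. Now apply Theorem~\ref{thm:sheeting} with this $\kappa_0$ and, say, $\delta_0=1/2$ to obtain $\epsilon_0,r_0,R_0$. Set $R_1=\max\{R_0,\mathcal{H}_0/\epsilon_0\}$. For $\xx_0\in\Sigma$ with $|\xx_0|>R_1+1$, condition \eqref{eqn:slmc} holds on $\Sigma\cap B_1(\xx_0)$ by \eqref{eqn:bmc}, and \eqref{eqn:totalcurv} holds by the Gauss-Bonnet bound, so the component of $\Sigma\cap B_{r_0}(\xx_0)$ through $\xx_0$ is a graph over $T_{\xx_0}\Sigma$ with gradient $\le 1/2$.

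**To convert the graphical conclusion into a pointwise $|A|$ bound,** I would run the usual point-picking/rescaling argument: if $\sup_{\Sigma\setminus B_{R_1+1}}|A|=\infty$, choose $\xx_k\in\Sigma$, $|\xx_k|\to\infty$, with $|A|(\xx_k)\to\infty$, rescale $\Sigma$ by $|A|(\xx_k)$ about $\xx_k$ to get self-shrinker-like surfaces (the rescaled surfaces are \emph{not} self-shrinkers, but since $|H|\le\mathcal{H}_0$ is bounded, the rescaled mean curvature tends to $0$, so the limit is a complete minimal surface in $\RR^3$) with $|A|=1$ at the origin and $|A|\le 1$ nearby and bounded area ratios; by Theorem~\ref{thm:sheeting} applied at scale $r_0$ (which after rescaling shrinks to $0$) each rescaled surface is locally a graph with small gradient over a fixed tangent plane, and passing to the limit the minimal surface is a union of planes, whence $|A|\equiv 0$ at the origin --- contradicting $|A|=1$ there. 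Hence $|A|\le\mathcal{A}_0'$ on $\Sigma\setminus B_{R_1+1}$. On the compact set $\bar B_{R_1+1}\cap\Sigma$, $|A|$ is bounded by a constant depending on $R_1$ (hence on $g_0,\mathcal{H}_0,\bar{\mathbf F}$) by smoothness and the properness of the embedding together with the curvature-estimate machinery of Theorem~\ref{thm:sheeting} applied at fixed scale (or simply compactness of the embedded surface with the stated local area and total-curvature bounds). Taking the maximum gives $\mathcal{A}_0=\mathcal{A}_0(g_0,\mathcal{H}_0,\bar{\mathbf F})$.

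**The main obstacle** is making the area bound $\mathcal{H}^2(\Sigma\cap B_1(\xx_0))\le C(\bar{\mathbf F})$ \emph{uniform in $\xx_0$}, including for $|\xx_0|$ large, since Huisken's monotonicity centered at the origin degenerates there; this is exactly the delicate point where one uses that $\Sigma$ is a self-shrinker (so one can recenter the monotonicity at $\xx_0$ using the flow $\Sigma_t=\sqrt{-t}\,\Sigma$, as sketched in the introduction) rather than an arbitrary surface of bounded mean curvature, and it is the only place the self-shrinker equation \eqref{eqn:shrinker} enters beyond supplying $|H|$ growth. A secondary technical point is verifying that the rescaled, almost-minimal limit surfaces in the blow-up genuinely fall under the scope of Theorem~\ref{thm:sheeting}; since that theorem is stated for self-shrinkers rather than for surfaces with small mean curvature, one should instead apply it directly to $\Sigma$ at the unrescaled scale $r_0$ to get the graphical decomposition, and then extract the curvature decay from the resulting uniform $C^{1,\alpha}$ (indeed $C^\infty$, by elliptic estimates for the self-shrinker equation) graph bounds --- this is the cleanest route and avoids the almost-minimal limit altogether.
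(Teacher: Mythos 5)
Your setup is the same as the paper's and is fine: the uniform bound $\int_{\Sigma\cap B_1(\xx_0)}\abs{A}^2\,d\mathcal{H}^2<\kappa_0$ comes from Ilmanen's local Gauss--Bonnet estimate together with the uniform Euclidean area ratios of properly embedded self-shrinkers (\cite{CZ}, \cite{DX}), so the area uniformity you worry about is not delicate; and \eqref{eqn:bmc} gives \eqref{eqn:slmc} once $\abs{\xx_0}>\mathcal{H}_0/\epsilon_0$, so Theorem \ref{thm:sheeting} applies at every far point. The genuine gap is in the step you yourself call the ``conversion'': neither of your two routes yields a pointwise $\abs{A}$ bound that is uniform as $\abs{\xx_0}\to\infty$. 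In the blow-up route you rescale by $a_k=\abs{A}(\yy_k)\to\infty$ and need $C^2_{loc}$ convergence of the rescaled surfaces so that the normalization $\abs{A}(\oo)=1$ survives and contradicts the planar limit; but $\abs{A}\le 2$ together with $\sup\abs{H}\to 0$ only gives $C^{1,\alpha}$/$W^{2,p}$ subconvergence, which does not preserve $\abs{A}(\oo)=1$, and every natural way to upgrade (Schauder for the rescaled shrinker equation, for $\la\vv,\nn\ra$ as in Lemma \ref{lem:slcurv}, or for Simons' equation) carries the drift term $\tfrac12\la\xx,\grad\cdot\ra$, whose coefficient after your rescaling is of size $\abs{\yy_k}/a_k$. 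That ratio is not a priori bounded here: the linear growth $\abs{A}\lesssim 1+\abs{\xx}$ of Theorem \ref{thm:lcurv} is proved under globally finite genus and is unavailable under the merely locally uniform genus bound \eqref{eqn:genus}, so you cannot assume $a_k\gtrsim\abs{\yy_k}$. Your ``cleanest route'' fails for the same structural reason: interior elliptic estimates for the graphical self-shrinker equation on a ball of fixed radius $r_0$ about $\xx_0$ have constants depending on the size $\sim\abs{\xx_0}$ of the first-order coefficient, hence do not give a bound independent of $\abs{\xx_0}$. (Also, a small slip: after rescaling by $a_k$ the graphical scale $r_0$ expands to $r_0a_k\to\infty$, it does not shrink.)

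This is precisely the difficulty that Lemmas \ref{lem:radext} and \ref{lem:imposcnormal} are built to overcome, and the paper's proof of the theorem invokes them right after Theorem \ref{thm:sheeting}: choosing $\delta_0$ so that the graphical conclusion forces the normal to oscillate by less than $\epsilon_2$ (not just $1/2$), one applies Lemma \ref{lem:radext} at the parameter $\epsilon\sim 4/(r_0\abs{\xx_0})$, i.e.\ on $B_{r_1}(\xx_0)$ with $r_1=r_0^2\abs{\xx_0}/16$, to extend the graph over a long thin rectangle adapted to the radial direction, and then Lemma \ref{lem:imposcnormal} treats the radial direction as the time direction of the flow $\sqrt{1-t}\,\Sigma$ and uses the Ecker--Huisken parabolic interior estimates \cite{EH2} to bound the second derivatives of the graph function by $\sim C_3/(\epsilon r)\sim C/r_0$, a constant independent of $\abs{\xx_0}$; elliptic arguments with bounded drift are only available when one rescales by factors $\lesssim 1/\abs{\xx_0}$, as in Lemma \ref{lem:slcurv}, not by $\abs{A}(\yy_k)$ or at a fixed scale. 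A secondary point: on the compact part, ``smoothness and properness'' only give a bound depending on $\Sigma$ itself; to obtain $\mathcal{A}_0=\mathcal{A}_0(g_0,\mathcal{H}_0,\mathbf{F}[\Sigma])$ as stated, the paper invokes the smooth compactness theorem for self-shrinkers \cite{CM3} (see also \cite{BW}), which is the precise form of the compactness argument your parenthetical only gestures at.
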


To some extent, Theorem \ref{thm:bmc} can be thought of as a mean curvature flow analog to \cite[Theorem 0.1]{MW} in which Munteanu-Wang proved that bounded scalar curvature for $4$-dimensional shrinking gradient Ricci solitons implies bounded curvature operator. However, the mean curvature here may change sign and the method employed in our proof is different from \cite{MW}. 

\begin{acknowledgement*}
The author thanks Neshan Wickramasekera for his hospitality and numerous helpful discussions during her visit of Cambridge University in May and June 2013 when part of this work was carried out. 
\end{acknowledgement*}

\section{Proof of Theorem \ref{thm:sheeting}}
The proof of Theorem \ref{thm:sheeting} is based on a bootstrap argument  to improve the oscillation of the unit normal $\nn$ to the given self-shrinker $\Sigma$. First the mean curvature bound \eqref{eqn:slmc} will be invoked in two ways. One is to show the same type of bound on the second fundamental form; see Lemma \ref{lem:slcurv}. The other is to yield directly from the self-shrinker equation \eqref{eqn:shrinker} that the position vector on $\Sigma$ is almost tangent to $\Sigma$. Next we give a brief explanation of the bootstrap machinery. The input is that for some $\epsilon\in (0,1)$, given $\xx\in\Sigma$, the oscillation of $\nn$ in the geodesic ball centered at $\xx$ with radius $1/(\epsilon |\xx|)$ is bounded by a fixed small constant. We may assume that $\epsilon^2|\xx|>1$ and let ${\sim}$ denote some multiple of the constant after it. Then we show in Lemma \ref{lem:radext} that for $\epsilon$ small and $|\xx|$ large, $\Sigma$ contains the graph of a $C^1$ function defined on the rectangle centered at $\xx$ with side lengths ${\sim}1/(\epsilon^2|\xx|)$ in the direction of $\xx$ and ${\sim}1/(\epsilon |\xx|)$ in the direction of $\nn(\xx)\times\xx$. Consequently the rate of the change of $\nn$ along the direction of $\xx$ is bounded by ${\sim}\epsilon^2|\xx|$; see Lemma \ref{lem:imposcnormal}. Thus the total curvature bound \eqref{eqn:totalcurv} gives the output that the oscillation of $\nn$ in the geodesic ball centered at $\xx$ with radius ${\sim}1/(\epsilon^2|\xx|)$ is bounded by the same constant as the input; see Lemma \ref{lem:iteration}. Hence this bootstrap machinery eventually leads to Theorem \ref{thm:sheeting}.

First we adopt a twisted argument of Choi-Schoen \cite{CS} to derive a pointwise bound on the second fundamental form of any properly embedded self-shrinker to which the unit normal is almost perpendicular to the position vector.

\begin{lemma}\label{lem:slcurv}
There exist constants $\epsilon_1\in (0,1)$ and $C_1>0$ such that for $0<\epsilon<\epsilon_1$ and $\xx_1\in\RR^3$ with $\epsilon|\xx_1|>1$, if $\Sigma$ is a properly embedded self-shrinker in $B_{r_1}(\xx_1)$ with $\partial\Sigma\subset\partial B_{r_1}(\xx_1)$ for $r_1=1/|\xx_1|$ which satisfies
\begin{equation}\label{eqn:smallmc}
\abs{H(\xx)}<\epsilon\abs{\xx}\quad\mbox{for all $\xx\in\Sigma$},
\end{equation}
then
\begin{equation}\label{eqn:slcurv}
\abs{A(\xx)}\leq C_1\epsilon\abs{\xx}\quad\mbox{for all $\xx\in\Sigma\cap B_{\frac{r_1}{2}}(\xx_1)$}.
\end{equation}
\end{lemma}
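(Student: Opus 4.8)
The plan is to adapt the classical Choi--Schoen curvature estimate to the self-shrinker setting, using the hypothesis \eqref{eqn:smallmc} to absorb the inhomogeneous terms. The starting point is Simons' identity for the second fundamental form of a hypersurface in $\RR^3$. For a self-shrinker, differentiating \eqref{eqn:shrinker} twice and substituting into Simons' equation yields a drift-type elliptic inequality of the schematic form $\lap\abs{A}^2 \geq -C\abs{A}^4 - (\text{lower order terms involving }\xx,\nn,H)$. On the ball $B_{r_1}(\xx_1)$ with $r_1 = 1/\abs{\xx_1}$ and $\epsilon\abs{\xx_1}>1$, the position vector satisfies $\abs{\xx}\sim\abs{\xx_1}$ uniformly (since $r_1 = 1/\abs{\xx_1}\ll\abs{\xx_1}$), so \eqref{eqn:smallmc} gives $\abs{H}\lesssim\epsilon\abs{\xx_1}$ throughout. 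Writing the self-shrinker equation as $\abs{\la\xx,\nn\ra}=2\abs{H}\lesssim\epsilon\abs{\xx_1}$, and noting $\abs{\xx}\lesssim\abs{\xx_1}$, gives a quantitative statement that $\xx$ is almost tangent to $\Sigma$. The key is that \emph{all} the extra terms in the Simons inequality carry a factor that is controlled by $\epsilon\abs{\xx_1}$ or by $\abs{\xx_1}$ times the scale, and on a ball of radius $r_1\sim 1/\abs{\xx_1}$ these rescale to bounded constants.

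The mechanism of the proof is the standard rescaled maximum-principle argument of Choi--Schoen. First I would define $f(\xx) = (\tfrac{r_1}{2} - \dist_\Sigma(\xx,\xx_1))^2 \abs{A(\xx)}^2$ on the intrinsic half-ball, which vanishes on the boundary, and let $\xx^*$ be an interior maximum point. If $f(\xx^*)$ is small (bounded by an absolute constant), then in particular at $\xx_1$ we get $(\tfrac{r_1}{2})^2\abs{A(\xx_1)}^2$ small, i.e. $\abs{A(\xx_1)} \lesssim 1/r_1 = \abs{\xx_1}$, but we need the sharper bound $\lesssim\epsilon\abs{\xx_1}$. So instead, at the point $\xx^*$ one rescales: set $\sigma$ to be the distance from $\xx^*$ to the boundary and $\mu = \abs{A(\xx^*)}$, consider the ball of intrinsic radius $\min(\sigma/2, \epsilon^{-1}\abs{\xx_1}^{-1}\cdot(\text{something}))$, and run the $\epsilon$-regularity / mean-value inequality for the subsolution $\abs{A}^2$ of the drift-Laplace inequality. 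The conclusion of the iteration is that either $\mu\sigma$ is bounded by a small constant --- which by unwinding gives $\abs{A(\xx_1)}\lesssim 1/r_1$, not yet enough --- or we reach a contradiction. To get the $\epsilon$-improvement I would instead track the inhomogeneous term carefully: because the zeroth-order coefficient in the Simons inequality is of size $\epsilon^2\abs{\xx_1}^2$ (not $\abs{\xx_1}^2$), the natural length scale of the equation is $1/(\epsilon\abs{\xx_1})$ rather than $1/\abs{\xx_1}$, and running the Choi--Schoen argument at \emph{that} scale (which is $\gtrsim r_1$, so still contained once $\epsilon$ is small) produces the bound $\abs{A}\lesssim \epsilon\abs{\xx_1}$. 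Concretely, one absorbs the gradient term $\abs{\grad\abs{A}}$ via Kato, and the remaining inhomogeneity $\epsilon\abs{\xx_1}\cdot\abs{A} + \epsilon^2\abs{\xx_1}^2$ is handled by Young's inequality, leaving a clean inequality $\lap_{\mathcal L}\abs{A}^2 \geq -C\abs{A}^4 - C\epsilon^2\abs{\xx_1}^2\abs{A}^2$ for the drift Laplacian $\lap_{\mathcal L} = \lap - \tfrac12\la\xx,\grad\cdot\ra$, and on a ball of radius $r_1$ the drift term contributes $\sim r_1\abs{\xx_1} = 1$, hence harmlessly.

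The main obstacle I anticipate is ensuring the argument gives the factor $\epsilon$ rather than just $\abs{\xx_1}$ in \eqref{eqn:slcurv}; this requires being careful that every inhomogeneous contribution in the iteration scales with $\epsilon$, and in particular that the geometry (volume ratios, isoperimetry needed for the mean-value inequality) is controlled purely by ambient Euclidean bounds on $B_{r_1}(\xx_1)$ and does not reintroduce an $\epsilon$-free constant. A secondary point requiring care is that Choi--Schoen needs an a priori area or monotonicity bound to start the iteration; here one uses that $\Sigma$ is properly embedded with $\partial\Sigma\subset\partial B_{r_1}(\xx_1)$ together with the fact that $\xx$ is almost tangent to $\Sigma$ --- essentially, in $B_{r_1}(\xx_1)$ the self-shrinker behaves like an almost-minimal surface at unit scale, so its density ratios are bounded, and this is precisely what lets the "twisted" Choi--Schoen argument run. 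I would set $\epsilon_1$ and $C_1$ by collecting the thresholds from these absorption steps.
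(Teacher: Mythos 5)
Your point-picking setup (the boundary-weighted function $(r_1-r)^2\abs{A}^2$, the rescaling at the maximum point so that $\abs{A}\leq 1$ on a unit ball) matches the paper's, but the mechanism you propose for extracting the factor $\epsilon$ does not work, and this is the heart of the lemma. You run Simons' inequality plus a Choi--Schoen $\epsilon$-regularity/mean-value iteration; that argument needs smallness of $\int\abs{A}^2$ (to absorb the $\abs{A}^4$ term and to make the mean-value inequality produce a small supremum), and no total curvature hypothesis at all appears in this lemma -- the bound \eqref{eqn:totalcurv} enters only later, in Lemma \ref{lem:iteration}. Your substitute claim, that ``the zeroth-order coefficient in the Simons inequality is of size $\epsilon^2\abs{\xx_1}^2$, so the natural length scale is $1/(\epsilon\abs{\xx_1})$,'' is not correct: for a self-shrinker the Simons-type identity reads $\lap\abs{A}^2-\tfrac12\la\xx,\grad\abs{A}^2\ra=2\abs{\grad A}^2+(1-2\abs{A}^2)\abs{A}^2$, with no such coefficient, and in the general hypersurface form the inhomogeneity involves $\grad^2H$ and $H\cdot A\ast A$, which your hypothesis does not make small in the required sense. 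There is also a domain error: $1/(\epsilon\abs{\xx_1})>r_1=1/\abs{\xx_1}$, so a ball of that radius is \emph{not} contained in $B_{r_1}(\xx_1)$, and shrinking $\epsilon$ makes this worse, not better. Even granting your final inequality $\lap_{\mathcal L}\abs{A}^2\geq -C\abs{A}^4-C\epsilon^2\abs{\xx_1}^2\abs{A}^2$, a mean-value argument would yield at best $\sup\abs{A}^2\lesssim r^{-2}\int\abs{A}^2+\dots$, which cannot produce $\abs{A}\lesssim\epsilon\abs{\xx_1}$ without an $\epsilon^2$-small energy, which you do not have.

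What the paper does instead, after the same point-picking and rescaling $\tilde\Sigma=\sigma^{-1}(\Sigma-\yy)$ with $\abs{A}\leq 1$ on $\tilde\Sigma\cap B_1$, is entirely linear: set $\vv=\yy/\abs{\yy}$ and use that $\la\vv,\nn\ra$ satisfies the drift Jacobi-type equation $\lap\la\vv,\nn\ra-\tfrac12\la\sigma\yy+\sigma^2\xx,\grad\la\vv,\nn\ra\ra+\abs{A}^2\la\vv,\nn\ra=0$ (\cite[Theorem 5.2]{CM2}), whose coefficients are bounded precisely because of the curvature bound arranged by point-picking. The hypothesis $\abs{H}<\epsilon\abs{\xx}$, via the self-shrinker equation, gives $\abs{\la\vv,\nn\ra}\leq 4\epsilon$ on $\tilde\Sigma\cap B_1$, and the Schauder interior estimate then gives $\abs{\grad\la\vv,\nn\ra}\leq C\epsilon$ on $B_{1/2}$. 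Since $\grad\la\vv,\nn\ra=-A(\vv^{T},\cdot)$ with $\abs{\vv^{T}}$ close to $1$, this controls a full row of $A$ by $C\epsilon$, and the trace bound $\abs{H}\leq\epsilon$ (after rescaling) controls the remaining diagonal entry; hence $\abs{A}\leq C\epsilon$ on $B_{1/2}$, contradicting $\abs{A(\oo)}=C_1\epsilon/8$ for $C_1$ large. This use of the almost-tangency function $\la\vv,\nn\ra$ and its linear equation is the missing idea in your proposal; without it (or some other replacement for the absent total-curvature smallness), the Choi--Schoen iteration you describe does not close.
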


\begin{proof}
Throughout, let $C$ be a universal constant which may change among lines. Let $r$ be the distance function from $\xx_1$. We define $F=(r_1-r)^2 |A|^2$ on $\Sigma$. Observe that $F$ vanishes on $\partial\Sigma\subset\partial B_{r_1}$. Thus $F$ achieves its finite maximum at a point $\yy$ in $\Sigma$. Let $C_1$ be a constant to be determined and assume that $\epsilon<\min\{1/2,4/C_1\}$. We show that $F(\yy)\geq (C_1\epsilon)^2/16$ leads to a contradiction for $C_1$ sufficiently large.

Define $\sigma$ by $8\sigma |A(\yy)|=C_1\epsilon$. If $F(\yy)\geq (C_1\epsilon)^2/16$, then $2\sigma\leq r_1-r(\yy)$. Thus for $\xx\in\Sigma\cap B_{\sigma}(\yy)$,
\begin{equation}
\abs{r_1-r(\xx)}\geq\abs{r_1-r(\yy)}-\sigma\geq\frac{1}{2}\abs{r_1-r(\yy)},
\end{equation}
and so
\begin{equation}
\frac{1}{4}\left(r_1-r(\yy)\right)^2\abs{A(\xx)}^2 \leq F(\xx) \leq F(\yy),
\end{equation}
implying $|A(\xx)|\leq 2 |A(\yy)|$.

Rescaling $\Sigma$ about $\yy$, we define $\tilde{\Sigma}=\sigma^{-1}(\Sigma-\yy)$. Then by our assumptions,
\begin{equation}
\abs{A(\oo)}=\frac{C_1\epsilon}{8}\leq\frac{1}{2},
\end{equation}
and $|A|\leq1$ on $\tilde{\Sigma}\cap B_1$. Note that $|\xx_1|>1/\epsilon>1$ and $\yy\in B_{r_1}(\xx_1)$. Thus $\yy\neq\oo$ and $\vv=\yy/|\yy|$ is well-defined. Then by \cite[Theorem 5.2]{CM2},
\begin{equation}
\lap\left\la\vv,\nn\right\ra-\frac{1}{2}\left\la\sigma\yy+\sigma^2\xx,\grad\left\la\vv,\nn\right\ra\right\ra+\abs{A}^2\left\la\vv,\nn\right\ra=0 \quad\mbox{on $\tilde{\Sigma}\cap B_1$}.
\end{equation}
Thus by the Schauder interior estimate \cite[Theorem 6.2]{GT},
\begin{equation}\label{eqn:splitcurv}
\sup_{\tilde{\Sigma}\cap B_{\frac{1}{2}}}\abs{\grad\left\la\vv,\nn\right\ra}\leq C\sup_{\tilde{\Sigma}\cap B_1}\abs{\left\la\vv,\nn\right\ra}.
\end{equation}
Note that \eqref{eqn:smallmc} together with $|\xx_1|>1$ implies that $|H|\leq\epsilon$ and $|\la\vv,\nn\ra|\leq 4\epsilon$ on $\tilde{\Sigma}\cap B_1$. Hence it follows from \eqref{eqn:splitcurv} that 
\begin{equation}
\sup_{\tilde{\Sigma}\cap B_{\frac{1}{2}}}\abs{A}\leq C\epsilon. 
\end{equation}
In particular, 
\begin{equation}
\frac{C_1\epsilon}{8}=\abs{A(\oo)}\leq C\epsilon,
\end{equation}
so, if $C_1>8C$, this gives a contradiction. 

Therefore, choosing $C_1=9C$, if $\epsilon<\min\{1/2,4/C_1\}$, then $F<(C_1\epsilon)^2/16$ on $\Sigma$, which implies \eqref{eqn:slcurv} immediately.
\end{proof}

Consequently $\Sigma$ in Lemma \ref{lem:slcurv} contains the graph of a $C^1$ function on the square centered at $\xx_1$ with length ${\sim}1/(\epsilon |\xx_1|)$ and with sides parallel to $\xx_1$ and $\nn(\xx_1)\times\xx_1$. Roughly speaking, the next lemma shows that one can patch these graphs along the radial direction to deduce the existence of a larger graphical region. 

Let $d_\Sigma$ denote the geodesic distance in $\Sigma$. Given $\vv\in\RR^3\setminus\{\oo\}$ and real numbers $a<b$, we define $I_{\vv}(a,b)$ to be the open interval $(a,b)$ on the straight line through $\oo$ oriented by $\vv$. Let $D_r(\xx)\subset\RR^2$ be the open disk centered at $\xx$ with radius $r$.

\begin{lemma}\label{lem:radext}
There exist $\epsilon_2,\eta_0\in (0,1)$ and $C_2>0$ such that for $0<\epsilon<\epsilon_2$ and $\xx_1\in\RR^3$ with $\epsilon |\xx_1|>4$, if $\Sigma$ is a properly embedded self-shrinker in $B_{r_1}(\xx_1)$ with $\xx_1\in\Sigma$ and $\partial\Sigma\subset\partial B_{r_1}(\xx_1)$ for $r_1=1/(\epsilon^2|\xx_1|)$ which satisfies that
\begin{equation}\label{eqn:split}
\abs{H(\xx)}<\epsilon_2\abs{\xx}\quad\mbox{for all $\xx\in\Sigma$},\quad\mbox{and}
\end{equation}
\begin{equation}\label{eqn:oscnormal}
\sup_{\substack{\xx,\yy\in\Sigma \\ d_\Sigma (\xx,\yy)<\epsilon r_1}}\abs{\nn(\xx)-\nn(\yy)}<\epsilon_2,
\end{equation}
then $\Sigma$ contains the graph of a function $u$ on the rectangle
\begin{equation} 
\eta_0r_1\left\{I_{\xx_1}(-1,1)\times I_{\vv_1}(-\epsilon,\epsilon)\right\}+\xx_1
\end{equation}
with $u(\xx_1)=0$ and $|\grad u|\leq C_2$, where $\vv_1=\nn(\xx_1)\times\xx_1$.
\end{lemma}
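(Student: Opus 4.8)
The plan is to build the desired graph by concatenating a chain of small graphical pieces along the radial direction, using Lemma \ref{lem:slcurv} to produce each piece and the oscillation hypothesis \eqref{eqn:oscnormal} to guarantee they fit together with a common normal direction. First I would fix $\xx_1$ and set up coordinates so that $\nn(\xx_1)=\ee_3$, the radial direction $\xx_1/|\xx_1|$ lies in the span of $\ee_1,\ee_3$ (it is nearly tangent to $\Sigma$ at $\xx_1$ by the self-shrinker equation \eqref{eqn:shrinker} together with the smallness of $H$, since $\langle\xx_1,\nn\rangle = 2H(\xx_1)$ is of order $\epsilon^2|\xx_1|\cdot r_1 \sim \epsilon^2$ relative to the scale $r_1$), and $\vv_1=\nn(\xx_1)\times\xx_1$ lies (up to normalization) along $\ee_2$. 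At $\xx_1$ the hypotheses of Lemma \ref{lem:slcurv} are satisfied after rescaling by $r_1' = 1/|\xx_1|$; note that $r_1 = r_1'/\epsilon$, so $B_{r_1}(\xx_1)$ contains $B_{r_1'/2}(\xx_1)$ when $\epsilon < 1/2$, and Lemma \ref{lem:slcurv} gives $|A| \le C_1 \epsilon |\xx| \le 2 C_1 \epsilon |\xx_1|$ on $\Sigma\cap B_{r_1'/2}(\xx_1)$, i.e. $|A|\lesssim \epsilon/r_1'$. Combined with $|\nn - \nn(\xx_1)|<\epsilon_2$ from \eqref{eqn:oscnormal} on a geodesic ball of radius $\epsilon r_1 = r_1'$, a standard graph-extraction argument (the component of $\Sigma$ through $\xx_1$ in a ball whose radius is a small multiple of the minimum of the inverse curvature bound and the size of the domain on which the normal has small oscillation) shows that the component of $\Sigma\cap B_{c r_1'}(\xx_1)$ through $\xx_1$ is a graph over $T_{\xx_1}\Sigma$ with small gradient; here $c$ is universal. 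This already produces a graph over a disk of radius $\sim r_1' = \epsilon r_1$ centered above $\xx_1$.

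Next I would iterate along the $\ee_1$-direction. Having a graph over $D_{cr_1'}(p_k)$ centered above a point $\xx_k$ on $\Sigma$ near the ray through $\xx_1$, I move a distance $\sim cr_1'/2$ in the radial direction to a new base point $\xx_{k+1}\in\Sigma$, still satisfying $|\xx_{k+1}|\sim|\xx_1|$ as long as we have not yet traveled a distance comparable to $|\xx_1|$ — and indeed the total radial extent we need is $2\eta_0 r_1 = 2\eta_0/(\epsilon^2|\xx_1|)$, which for $\eta_0$ small is much smaller than $|\xx_1|$, so $|\xx_k|$ stays within a factor $2$ of $|\xx_1|$ throughout. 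At $\xx_{k+1}$ the curvature bound from Lemma \ref{lem:slcurv} applies again at the same scale (since $|\xx_{k+1}|\sim|\xx_1|$), and crucially \eqref{eqn:oscnormal} controls $|\nn(\xx_{k+1}) - \nn(\xx_k)|$, hence keeps all the local tangent planes within $O(\epsilon_2)$ of $\ee_3$: the graphs over consecutive disks overlap and agree, so their union is a single graph. The number of steps needed to cover radial length $2\eta_0 r_1$ is $\sim \eta_0 r_1/(cr_1') = \eta_0/(c\epsilon)$, which is large, so I must track that the accumulated tilt of the normal stays bounded. This is exactly where \eqref{eqn:oscnormal} is used at full strength: the geodesic distance between $\xx_1$ and $\xx_k$ along the chain is at most $\sim 2\eta_0 r_1$, and since the graphs have bounded gradient this is within a universal constant of $\epsilon r_1$ once $\eta_0$ is chosen small enough, so \eqref{eqn:oscnormal} bounds $|\nn(\xx_k)-\nn(\xx_1)|$ by $\epsilon_2$ directly — no accumulation, the oscillation is controlled globally along the chain in one shot. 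That fixes the common graphing plane as $T_{\xx_1}\Sigma$ and gives the gradient bound $|\grad u|\le C_2$.

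Finally I would thicken the resulting radial strip in the $\vv_1 = \ee_2$ direction. Each base point $\xx_k$ already carried, via Lemma \ref{lem:slcurv} plus the normal oscillation, a graph over a two-dimensional disk $D_{cr_1'}(p_k)$ — not merely a radial segment — so the union over the chain already contains a graph over a neighborhood of the segment $\eta_0 r_1 I_{\xx_1}(-1,1)$ of width $\sim cr_1' = c\epsilon r_1$ in the $\ee_2$-direction. Choosing $\eta_0 \le c/2$ (and shrinking it further to absorb the bounded-gradient distortion between the extrinsic rectangle in the statement and the intrinsic balls used above, and to ensure the radial travel keeps $|\xx_k|$ comparable to $|\xx_1|$), the rectangle $\eta_0 r_1\{I_{\xx_1}(-1,1)\times I_{\vv_1}(-\epsilon,\epsilon)\} + \xx_1$ — which has radial side $2\eta_0 r_1$ and transverse side $2\eta_0\epsilon r_1 \le c\epsilon r_1$ — is contained in the domain of $u$. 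The normalization $u(\xx_1) = 0$ is immediate since $\xx_1\in\Sigma$ lies on its own tangent plane. The main obstacle is the bookkeeping in the middle step: one must verify that the chain of base points can be chosen so that consecutive local graphs genuinely overlap (which needs the gradient bound to convert the radial extrinsic displacement into a controlled displacement of the graph domains) and that embeddedness prevents a far-away sheet of $\Sigma$ from intruding into the ball at any stage — this last point is where properness and embeddedness of $\Sigma$, together with the smallness of the scale relative to $|\xx_1|$, are essential.
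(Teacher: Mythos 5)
Your opening step (a small graph at scale $\epsilon r_1=1/(\epsilon|\xx_1|)$ around $\xx_1$ from \eqref{eqn:oscnormal} and the curvature bound) matches the start of the paper's proof, but the core of your argument has a genuine gap: it cannot produce the radial extent $\eta_0 r_1$ with $\eta_0$ \emph{independent of} $\epsilon$, which is the whole content of the lemma. In the statement the constants $\epsilon_2,\eta_0,C_2$ are fixed before $\epsilon$, and the rectangle's radial side $\eta_0 r_1$ exceeds the oscillation scale $\epsilon r_1$ by the factor $\eta_0/\epsilon$, which is unbounded as $\epsilon\to 0$. Hence your key assertion that the geodesic distance $\sim 2\eta_0 r_1$ along the chain ``is within a universal constant of $\epsilon r_1$ once $\eta_0$ is chosen small enough,'' so that \eqref{eqn:oscnormal} bounds $\abs{\nn(\xx_k)-\nn(\xx_1)}$ in one shot, is false: that would require $\eta_0\lesssim\epsilon$, i.e.\ $\eta_0$ depending on $\epsilon$, which both contradicts the quantifier order and would destroy the bootstrap in the proof of Theorem \ref{thm:sheeting}, where the gain from scale $\epsilon r_1$ to a fixed fraction of $r_1$ is exactly what is iterated. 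If instead you chain \eqref{eqn:oscnormal} step by step, the number of steps is $\sim\eta_0/(c\epsilon)$ and each step allows a tilt up to $\epsilon_2$, so the accumulated tilt is $\sim\epsilon_2\eta_0/\epsilon$, unbounded as $\epsilon\to 0$; and unlike Lemma \ref{lem:iteration} there is no total curvature hypothesis in this lemma that could be used to stop the accumulation. So as written the construction only yields a graph over a radial length comparable to $\epsilon r_1$, not $\eta_0 r_1$.

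The missing idea is the parabolic one the paper uses: since ${\rm graph}(\tilde u)\subset\Sigma$ is a self-shrinker, $\sqrt{1-t}\,{\rm graph}(\tilde u)$ moves by mean curvature, and a radial displacement $\delta$ at radius $\abs{\xx_1}$ corresponds to a time interval $\sim\delta/\abs{\xx_1}$. Graphical control on a spatial ball of radius $\sim\epsilon r_1$ therefore propagates, via the Ecker--Huisken interior gradient estimate and the Colding--Minicozzi height estimate, over times $\sim(\epsilon r_1)^2$, i.e.\ over radial distances $\sim\abs{\xx_1}(\epsilon r_1)^2=r_1$ --- a gain of the factor $1/\epsilon$ that no purely elliptic chaining at scale $\epsilon r_1$ can give, and with a gradient bound $C_2\sim C_{EH}$ that does not degrade with the number of steps. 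The paper then runs a maximal-extension (continuity) argument in each radial direction, with an extra construction for the inward direction where the height estimate is applied along the ray toward the origin. Without this mechanism (or some substitute converting the normal-oscillation hypothesis at scale $\epsilon r_1$ into gradient control over radial length $\sim r_1$), your proposal does not prove the lemma.
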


\begin{proof}
Throughout, let $\epsilon_2$ be a sufficiently small constant to be determined. By the gradient estimate of Ecker-Huisken \cite[Theorem 2.3]{EH2} (see also \cite{CM1}) and the height estimate \cite[Lemma 3]{CM1}, there is a universal constant $C_{EH}>1$ such that, if a mean curvature flow $\{\Sigma_t\}_{t\in [0,1]}$ can be written as ${\rm graph}(w)$, where $w: D_3(\oo)\times [0,1]\to\RR$ with $|\grad w(\cdot,0)|<1$, then $|\grad w(\oo,t)|\leq C_{EH}$ for $0\leq t\leq 1$. 

By \eqref{eqn:split}, we can choose the unit normal $\NN_1$ to the plane spanned by $\xx_1$ and $\vv_1$ such that 
\begin{equation}
\left\la\NN_1,\nn(\xx_1)\right\ra>\sqrt{1-4\epsilon_2^2}.
\end{equation}
Furthermore, it follows from \eqref{eqn:oscnormal} that for $\xx\in\Sigma$ with $d_\Sigma(\xx,\xx_1)<\epsilon r_1$,
\begin{equation}
\left\la\NN_1,\nn(\xx)\right\ra>\sqrt{1-4\epsilon_2^2}-\epsilon_2.
\end{equation}
Thus, if $\epsilon_2$ is small such that $\sqrt{1-4\epsilon_2^2}-\epsilon_2>1/\sqrt{2}$, then $\Sigma$ contains the graph of a function $u$ on 
\begin{equation}
\Omega=\frac{\epsilon r_1}{2}\left\{I_{\xx_1}(-1,1)\times I_{\vv_1}(-1,1)\right\}+\xx_1
\end{equation}
with $u(\xx_1)=0$ and $|\grad u|<1$. Here the gradient bound of $u$ can be deduced from
\begin{equation}
1+\abs{\grad u}^2=\left\la\NN_1,\nn\right\ra^{-2}<2.
\end{equation}

Next we show that $u$ can be extended along the direction of $\xx_1$ with control of its gradient such that $\Sigma$ contains a larger graphical region. Define $\eta\in (0,1)$ by $2(1+4C_{EH}^2)\eta^2=1$. Suppose that $l$ is the maximum number such that there exists a function $\tilde{u}$ defined on
\begin{equation}
\tilde{\Omega}=r_1\left\{I_{\xx_1}(-\eta\epsilon,l^2)\times I_{\vv_1}(-\eta\epsilon,\eta\epsilon)\right\}+\xx_1
\end{equation}
such that ${\rm graph}(\tilde{u})\subset\Sigma$, $\tilde{u}(\xx_1)=0$ and $|\grad\tilde{u}|\leq 2C_{EH}$. Clearly $l$ exists and $2l^2>\epsilon$. And $\tilde{u}=u$ on $\Omega\cap\tilde{\Omega}$ as $\Sigma$ is embedded. We will show that the smallness of $l$ leads to a contradiction for $\epsilon_2$ sufficiently small. 

If $\epsilon_2<10^{-2}$ and $l<\eta/12$, we can define a function $\tilde{v}$ by
\begin{equation}
\tilde{v}(\pp,t)=\sqrt{1-t}\, \tilde{u}\left(\frac{\pp}{\sqrt{1-t}}\right)
\end{equation}
on the space-time domain
\begin{equation}
\left\{6l\epsilon r_1\left(I_{\xx_1}(-1,1)\times I_{\vv_1}(-1,1)\right)+\xx_1\right\}\times [0,T],
\end{equation}
where
\begin{equation}\label{eqn:time}
0<T=1-\left(\frac{\abs{\xx_1}+6l\epsilon r_1}{\abs{\xx_1}+l^2r_1}\right)^2<\left(2l\epsilon r_1\right)^2<10^{-2}.
\end{equation}
Here we use the assumption that $\epsilon |\xx_1|>4$ in \eqref{eqn:time}. Since ${\rm graph}(\tilde{u})\subset\Sigma$ is a self-shrinker, ${\rm graph}(\tilde{v}(\cdot,t))$ moves by mean curvature. 

Let
\begin{equation}
\pp_1=\frac{\xx_1}{\sqrt{1-T}}\quad\mbox{and}\quad\yy_1=\pp_1+\tilde{u}(\pp_1)\NN_1.
\end{equation}
Then the height estimate \cite[Lemma 3]{CM1} gives
\begin{equation}\label{eqn:height}
\abs{\tilde{u}(\pp_1)}=\frac{\abs{\tilde{v}(\xx_1,T)}}{\sqrt{1-T}}<10^2l\epsilon r_1.
\end{equation} 
Also, observe that $|\grad\tilde{v}(\pp,0)|=|\grad\tilde{u}(\pp)|<1$. Thus it follows from the gradient estimate \cite[Theorem 2.3]{EH2} and the definition of $C_{EH}$ that 
\begin{equation}\label{eqn:gradient}
\abs{\grad\tilde{u}(\pp_1)}=\abs{\grad\tilde{v}(\xx_1,T)}\leq C_{EH}. 
\end{equation}
Hence, combining \eqref{eqn:oscnormal}, \eqref{eqn:height} and \eqref{eqn:gradient}, there exists an $\epsilon_2>0$ small depending only on $C_{EH}$ such that $B_{\epsilon r_1}(\yy_1)\subset B_{r_1}(\xx_1)$ and for $\xx\in\Sigma$ with $d_\Sigma(\xx,\yy_1)<\epsilon r_1$,
\begin{equation}\label{eqn:normal}
\left\la\NN_1,\nn(\xx)\right\ra\geq\frac{1}{\sqrt{1+C_{EH}^2}}-\epsilon_2>\frac{1}{\sqrt{1+4C_{EH}^2}}.
\end{equation}

Note that 
\begin{equation}\label{eqn:radial}
\abs{\pp_1}=\abs{\xx_1}+l^2r_1-\frac{6l\epsilon r_1}{\sqrt{1-T}}>\abs{\xx_1}+l^2r_1-12l\epsilon r_1.
\end{equation}
Hence, if $l<\eta/24$, it follows from \eqref{eqn:normal} and \eqref{eqn:radial} that $\tilde{u}$ can be extended to  
\begin{equation}
r_1\left\{I_{\xx_1}\left(-\eta\epsilon,l^2+\frac{\eta\epsilon}{2}\right)\times I_{\vv_1}(-\eta\epsilon,\eta\epsilon)\right\}+\xx_1,
\end{equation}
satisfying that ${\rm graph}(\tilde{u})\subset\Sigma$ and $|\grad\tilde{u}|\leq 2C_{EH}$. This contradicts the maximality of $l$. Therefore there is an $\epsilon_2>0$ small depending only on $C_{EH}$ such that $l\geq\eta/24$.

Similarly one can extend ${\rm graph}(u)$ along the opposite direction of $\xx_1$ to a larger graphical region in $\Sigma$. Namely, we define $\hat{\eta}\in (0,1)$ by $2(1+16C_{EH}^2)\hat{\eta}^2=1$. Suppose that $\hat{l}$ is the maximum number such that there exists a function $\hat{u}$ on
\begin{equation}
\hat{\Omega}=r_1\left\{I_{\xx_1}(-\hat{l}^2,\hat{\eta}\epsilon)\times I_{\vv_1}(-\hat{\eta}\epsilon,\hat{\eta}\epsilon)\right\}+\xx_1
\end{equation}
such that ${\rm graph}(\hat{u})\subset\Sigma$, $\hat{u}(\xx_1)=0$ and $|\grad\hat{u}|\leq 4C_{EH}$. Clearly $\hat{l}$ exists and $2\hat{l}^2>\epsilon$. And $\hat{u}=u$ on $\Omega\cap\hat{\Omega}$. We will bound $\hat{l}$ from below by contradiction.

First we derive a $L^\infty$ bound of $\hat{u}$ on the ray from $\oo$ to $\xx_1$ by invoking that $\{\sqrt{1-t}\, \Sigma\}_{t<1}$ is a mean curvature flow. Assume that $\epsilon_2<10^{-2}$ and $\hat{l}<\hat{\eta}/12$. We can define
\begin{equation}
\hat{v}(\pp,t)=\sqrt{1-t}\, \hat{u}\left(\frac{\pp}{\sqrt{1-t}}\right)
\end{equation}
on the space-time domain
\begin{equation}
\left\{6\hat{l}\epsilon r_1\left(I_{\xx_1}(-1,1)\times I_{\vv_1}(-1,1)\right)+\hat{\pp}_1\right\}\times [0,\hat{T}],
\end{equation}
where 
\begin{equation}
\hat{\pp}_1=\left(\abs{\xx_1}-\hat{l}^2r_1+6\hat{l}\epsilon r_1\right)\frac{\xx_1}{\abs{\xx_1}},\quad 0<\hat{T}=1-\frac{\abs{\hat{\pp}_1}^2}{\abs{\xx_1}^2}<\left(2\hat{l}\epsilon r_1\right)^2<10^{-2}.
\end{equation}
Here we use the assumption that $\epsilon |\xx_1|>4$ to estimate $\hat{T}$. And ${\rm graph}(\hat{v}(\cdot,t))$ moves by mean curvature, as ${\rm graph}(\hat{u})\subset\Sigma$ is a self-shrinker. Observe that 
\begin{equation}
\hat{v}(\hat{\pp}_1,\hat{T})=\left(1-\hat{T}\right)^{\frac{1}{2}}\hat{u}\left(\xx_1\right)=0 \quad\mbox{and}\quad\abs{\hat{\pp}_1}>\frac{\abs{\xx_1}}{2}. 
\end{equation}
Thus the height estimate \cite[Lemma 3]{CM1} implies that, if $|\hat{\pp}_1|\leq p\leq |\xx_1|$, then
\begin{equation}\label{eqn:height1}
\abs{\hat{u}\left(\frac{p\xx_1}{\abs{\xx_1}}\right)}=\frac{p}{\abs{\hat{\pp}_1}}\abs{\hat{v}\left(\hat{\pp}_1,1-\frac{\abs{\hat{\pp}_1}^2}{p^2}\right)}<C\hat{l}\epsilon r_1
\end{equation}
for some $C$ depending only on $C_{EH}$. 

Next let $\hat{\yy}_1=\hat{\pp}_1+\hat{u}(\hat{\pp}_1)\NN_1$. By \eqref{eqn:height1} and the assumption that $\epsilon |\xx_1|>4$, we get that
\begin{equation}
\abs{\frac{\hat{\yy}_1}{\abs{\hat{\yy}_1}}-\frac{\xx_1}{\abs{\xx_1}}}=\abs{\frac{\hat{\yy}_1}{\abs{\hat{\yy}_1}}-\frac{\hat{\pp}_1}{\abs{\hat{\pp}_1}}}\leq\frac{2\abs{\hat{u}(\hat{\pp}_1)}}{\abs{\hat{\pp}_1}}<C^\prime \epsilon_2
\end{equation}
for some $C^\prime$ depending only on $C$. Together with \eqref{eqn:split}, this further gives that for $\epsilon_2$ sufficiently small,
\begin{equation}\label{eqn:split1}
\abs{\left\la\xx_1,\nn(\hat{\yy}_1)\right\ra}<\left(C^\prime+2\right)\epsilon_2\abs{\xx_1}<\frac{\abs{\xx_1}}{2}.
\end{equation}
Thus by \eqref{eqn:oscnormal}, \eqref{eqn:height1} and \eqref{eqn:split1}, there exists an $\epsilon_2>0$ depending only on $C^\prime$ such that $B_{\epsilon r_1}(\hat{\yy}_1)\subset B_{r_1}(\xx_1)$ and for $\xx\in\Sigma$ with $d_\Sigma (\xx,\hat{\yy}_1)<\epsilon r_1$,
\begin{equation}
\left\la\hat{\NN}_1,\nn(\xx)\right\ra>\sqrt{1-\left(C^\prime+2\right)^2\epsilon_2^2}-\epsilon_2>\frac{1}{\sqrt{2}},
\end{equation}
where $\hat{\NN}_1$ is the unit normal to the plane spanned by $\xx_1$ and $\hat{\vv}_1=\nn (\hat{\yy}_1)\times\xx_1\neq\oo$. Furthermore, if $\hat{l}<\eta/(4C)$, then $|\la\hat{\vv}_1,\hat{\yy}_1\ra|<\eta\epsilon r_1|\hat{\vv}_1|/4$ and the geodesic ball in $\Sigma$ centered at $\hat{\yy}_1$ with radius $\epsilon r_1$ contains the graph of a function $u^\prime$ defined on
\begin{equation}
\eta\epsilon r_1\left\{I_{\xx_1}(-1,1)\times I_{\hat{\vv}_1}(-1,1)\right\}+\hat{\pp}_1
\end{equation}
with $\hat{\yy}_1\in {\rm graph}(u^\prime)$ and $|\grad u^\prime|<1$. Apply the previous arguments\footnote{Note that $u^\prime (\hat{\pp}_1)$ is bounded by ${\sim}\epsilon r_1$, which, together with $|\grad u^\prime|<1$, is the only assumption needed previously.}, which we use to extend $u$ along the direction of $\xx_1$, to $u^\prime$. Hence, for $\epsilon_2$ sufficiently small, there exists a function $\hat{u}^\prime$, which is an extension of $u^\prime$, defined on 
\begin{equation}
\hat{\Omega}^\prime=r_1\left\{I_{\xx_1}(-\eta\epsilon,10^{-4}\eta^2)\times I_{\hat{\vv}_1}(-\eta\epsilon,\eta\epsilon)\right\}+\hat{\pp}_1
\end{equation}
such that ${\rm graph}(\hat{u}^\prime)\subset\Sigma$ with $\hat{\yy}_1\in {\rm graph}(\hat{u}^\prime)$ and $|\grad\hat{u}^\prime|\leq 2C_{EH}$.

Moreover, we show that if $\hat{l}$ is small, then $\hat{u}^\prime (\xx_1)=0$, i.e., $\xx_1\in {\rm graph}(\hat{u}^\prime)$. To see this, observe that, if $\hat{\pp}\in\hat{\Omega}^\prime$ satisfies $|\la\hat{\vv}_1,\hat{\pp}\ra|<\eta\epsilon r_1|\hat{\vv}_1|/4$, the geodesic ball in $\Sigma$ centered at $\hat{\pp}+\hat{u}^\prime (\hat{\pp})\hat{\NN}_1$ with radius $\eta\epsilon r_1/4$ is contained in ${\rm graph}(\hat{u}^\prime)$. Consider the curve
\begin{equation}
\gamma: [\abs{\hat{\pp}_1},\abs{\xx_1}]\to\RR^3,\quad
\gamma (p)= \frac{p\xx_1}{\abs{\xx_1}}+\hat{u}\left(\frac{p\xx_1}{\abs{\xx_1}}\right)\NN_1.
\end{equation}
If $\hat{l}<\min\{\eta/(4C),10^{-2}\eta\}$, by \eqref{eqn:height1}, we have that $|\la\gamma (p),\hat{\vv}_1\ra|<\eta\epsilon r_1|\hat{\vv}_1|/4$ and by a covering argument, $\gamma\subset{\rm graph}(\hat{u}^\prime)$ as $\hat{\yy}_1\in {\rm graph}(\hat{u})\cap {\rm graph}(\hat{u}^\prime)$ and $\xx_1\in\hat{\Omega}^\prime$. Thus $\xx_1=\gamma(|\xx_1|)\in {\rm graph}(\hat{u}^\prime)$.
 
Finally, by \eqref{eqn:split}, \eqref{eqn:split1} and the gradient bound of $\hat{u}^\prime$, we get that
\begin{equation}
\begin{split}
& \left\la\NN_1,\nn(\hat{\yy}_1)\right\ra\geq\left\la\hat{\NN}_1,\nn(\xx_1)\right\ra-\abs{\NN_1-\nn(\xx_1)}-\abs{\hat{\NN}_1-\nn(\hat{\yy}_1)}\\
& \quad\quad\, \geq\frac{1}{\sqrt{1+4C_{EH}^2}}-\left(2-2\sqrt{1-4\epsilon_2^2}\right)^{\frac{1}{2}}-\left(2-2\sqrt{1-\left(C^\prime+2\right)^2\epsilon_2^2}\right)^{\frac{1}{2}}.
\end{split}
\end{equation}
Hence there exists an $\epsilon_2>0$ depending only on $C_{EH}$ and $C^\prime$ such that 
\begin{equation}
\left\la\NN_1,\nn(\hat{\yy}_1)\right\ra\geq\frac{1}{\sqrt{1+9C_{EH}^2}}\, ,\quad\mbox{i.e.,}\quad\abs{\grad\hat{u}(\hat{\pp}_1)}\leq 3C_{EH}, 
\end{equation}
and invoking that $B_{\epsilon r_1}(\hat{\yy}_1)\subset B_{r_1}(\xx_1)$ and \eqref{eqn:oscnormal} again, $\hat{u}$ can be extended to 
\begin{equation}
r_1\left\{I_{\xx_1}\left(-\hat{l}^2-\frac{\hat{\eta}\epsilon}{2},\hat{\eta}\epsilon\right)\times I_{\vv_1}(-\hat{\eta}\epsilon,\hat{\eta}\epsilon)\right\}+\xx_1
\end{equation}
with ${\rm graph}(\hat{u})\subset\Sigma$ and $|\grad\hat{u}|\leq 4C_{EH}$. This contradicts the maximality of $\hat{l}$ and thus $\hat{l}\geq\min\{\hat{\eta}/12,\eta/(4C),10^{-2}\eta\}$ for $\epsilon_2$ sufficiently small. 

Therefore, choosing $\epsilon_2$ sufficiently small, $\eta_0=\min\{\eta,\hat{\eta},l^2,\hat{l}^2\}$ and $C_2=4C_{EH}$, the lemma follows immediately from that $u=\tilde{u}=\hat{u}$ on $\Omega\cap\tilde{\Omega}\cap\hat{\Omega}$.
\end{proof}

Furthermore, in the following lemma, we estimate the rate of change of the unit normal for graphical self-shrinkers.

\begin{lemma}\label{lem:imposcnormal}
There exists a $C_3>0$ depending only on $C_2$ such that for $\epsilon\in (0,1/4)$ and $p\in\RR$ with $p\epsilon>4$, if $\Sigma$ is a self-shrinker given by the graph of a function 
\begin{equation}
u(p_1,p_2): (p-2r,p+2r)\times (-4\epsilon r,4\epsilon r)\to\RR
\end{equation}
with $u(p,0)=0$ and $|\grad u|<C_2$ for $r=1/(p\epsilon^2)$, then on $(p-r,p+r)\times (-\epsilon r,\epsilon r)$,
\begin{equation}
\abs{\partial_1u}\leq C_3\epsilon\quad\mbox{and}\quad\epsilon\abs{\partial_2^2u}+\abs{\partial_1^2u}+\abs{\partial_1\partial_2u}\leq\frac{C_3}{r},
\end{equation}
where $\partial_i$ denotes taking partial derivative with respect to $p_i$.
\end{lemma}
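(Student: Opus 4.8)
The plan is to use the fact that $\{\lambda\Sigma\}_{\lambda>0}$ is, after the substitution $\lambda=\sqrt{1-t}$, a mean curvature flow, so that the radial ($p_1$) direction of the graph simultaneously plays the role of a time direction — this is the ``two scaling properties'' alluded to in the introduction. As in Lemma~\ref{lem:radext}, the graph of $u$ is taken over the plane through $\oo$ spanned by the radial unit vector (the $p_1$-axis) and a perpendicular unit vector (the $p_2$-axis), so that $\lambda\Sigma$ is the graph of $v(q,\lambda):=\lambda\,u(q/\lambda)$ over $\lambda\cdot\mathrm{dom}(u)$ and, writing $v(q,t)=\sqrt{1-t}\,u(q/\sqrt{1-t})$, the family $\{\mathrm{graph}(v(\cdot,t))\}_{t<1}$ moves by mean curvature with $v(\cdot,0)=u$. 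Differentiating the definition of $v$ in $t$ gives the purely algebraic identities, valid at every $q=(q_1,q_2)$ in the domain of $u$,
\[
\partial_tv(q,0)=\tfrac12\bigl(\grad u(q)\cdot q-u(q)\bigr),\qquad
\partial_k\partial_tv(q,0)=\tfrac12\,\grad(\partial_ku)(q)\cdot q\quad(k=1,2),
\]
so that $\grad u(q)\cdot q-u(q)$ and $\grad(\partial_ku)(q)\cdot q$ are governed by $\partial_tv$ and $\partial_k\partial_tv$ at time $0$. On the other hand, since $\mathrm{graph}(v(\cdot,t))$ moves by mean curvature, $\partial_tv=\bigl(\delta_{ij}-\tfrac{\partial_iv\,\partial_jv}{1+\abs{\grad v}^2}\bigr)\partial_i\partial_jv$, whence $\abs{\partial_tv}\le C(C_2)\abs{\grad^2v}$ and $\abs{\grad\partial_tv}\le C(C_2)\bigl(\abs{\grad^2v}^2+\abs{\grad^3v}\bigr)$ wherever $\abs{\grad v}\le C_2$.

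Next I would estimate $\grad^2v$ and $\grad^3v$ at a fixed target point. Let $q^*=(a,b)$ with $\abs{a-p}<r$ and $\abs{b}<\epsilon r$, and set $\lambda_0=1+\tfrac12(\epsilon r)^2$, $\lambda_1=1-\tfrac14(\epsilon r)^2$, with times $t_i=1-\lambda_i^2$. Using $\epsilon<1/4$ and that $p\epsilon>4$ forces $(\epsilon r)^2=r/p<1/16$, one checks that $D_{\epsilon r}(q^*)\subset\lambda\cdot\mathrm{dom}(u)$ for every $\lambda\in[\lambda_1,\lambda_0]$, that $t_1-t_0$ is comparable to $(\epsilon r)^2$, and that $t=0$ (that is $\lambda=1$) lies in the latter part of $[t_0,t_1]$; hence $\{\mathrm{graph}(v(\cdot,t))\}_{t\in[t_0,t_1]}$ is a graphical mean curvature flow over $D_{\epsilon r}(q^*)$ with $\abs{\grad v(\cdot,t_0)}=\abs{\grad u}<C_2$ there. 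Parabolically rescaling by $\epsilon r$ in space and $(\epsilon r)^2$ in time turns this into a graphical mean curvature flow over $D_1(\oo)\times[0,\hat T]$ with $\hat T$ comparable to $1$ and initial gradient $<C_2$, so the Ecker--Huisken gradient estimate \cite[Theorem~2.3]{EH2} together with the interior estimates for graphical mean curvature flow \cite{EH2} give, after unscaling, $\abs{\grad^2v(q^*,0)}\le C(C_2)/(\epsilon r)$ and $\abs{\grad^3v(q^*,0)}\le C(C_2)/(\epsilon r)^2$, hence $\abs{\partial_tv(q^*,0)}\le C(C_2)/(\epsilon r)$ and $\abs{\grad\partial_tv(q^*,0)}\le C(C_2)/(\epsilon r)^2$. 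In particular $\abs{\partial_2^2u(q^*)}\le\abs{\grad^2v(q^*,0)}\le C(C_2)/(\epsilon r)$, which is one of the four bounds.

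To finish, I would plug these estimates into the algebraic identities together with $\abs{a}\ge p/2$, $\abs b<\epsilon r$, and the elementary relations $1/\bigl(p(\epsilon r)^2\bigr)=1/r$, $1/(p\epsilon r)=\epsilon$ and $r/p<1/16$ forced by $r=1/(p\epsilon^2)$ and $p\epsilon>4$. The identity for $\grad(\partial_2u)(q^*)\cdot q^*=q_1\partial_1\partial_2u(q^*)+q_2\partial_2^2u(q^*)$ gives $\abs{\partial_1\partial_2u(q^*)}\le\tfrac2p\bigl(\epsilon r\cdot\tfrac{C(C_2)}{\epsilon r}+\tfrac{2C(C_2)}{(\epsilon r)^2}\bigr)\le C(C_2)/r$, and then the identity for $\grad(\partial_1u)(q^*)\cdot q^*$ gives $\abs{\partial_1^2u(q^*)}\le C(C_2)/r$ in the same way. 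For the gradient bound, let $M=\sup\{\abs{\partial_1u(q)}:q\in[p-r,p+r]\times[-\epsilon r,\epsilon r]\}<\infty$; since $u(p,0)=0$, integrating $\grad u$ along the broken segment $(p,0)\to(a,0)\to(a,b)$ gives $\abs{u(q^*)}\le rM+\epsilon rC_2$, so the identity for $\grad u(q^*)\cdot q^*-u(q^*)$ yields
\[
\abs{\partial_1u(q^*)}\le\frac2p\Bigl(rM+\epsilon rC_2+\frac{2C(C_2)}{\epsilon r}\Bigr)\le\frac M8+C(C_2)\,\epsilon .
\]
Taking the supremum over $q^*$ absorbs the term $M/8$ and gives $M\le C(C_2)\,\epsilon$. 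Choosing $C_3=C_3(C_2)$ larger than all the constants produced proves both asserted inequalities.

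The crux — and the step I expect to be the main obstacle — is the bookkeeping that makes $\partial_1u$ of order $\epsilon$ rather than merely bounded: it rests on pairing the self-shrinker structure, which contributes the factor $1/p$ through the term $\grad u(q)\cdot q$, with the parabolic interior bound $\abs{\partial_tv(q^*,0)}\lesssim1/(\epsilon r)$, via the identity $1/(p\epsilon r)=\epsilon$, and on the self-improving (absorption) argument in the last display, which needs $r/p<1/16$, i.e. exactly $p\epsilon>4$. The other point requiring care is verifying that $\{v(\cdot,t)\}$ is genuinely a graphical mean curvature flow on a parabolic neighborhood $D_{\epsilon r}(q^*)\times[t_0,t_1]$ of the correct parabolic aspect ratio with $\lambda=1$ interior to the time interval; this is routine but is precisely where $\epsilon<1/4$ and $p\epsilon>4$ are used.
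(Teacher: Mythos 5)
Your proposal is correct and follows essentially the same route as the paper: both exploit that $v(q,t)=\sqrt{1-t}\,u(q/\sqrt{1-t})$ is a graphical mean curvature flow, apply Ecker--Huisken interior estimates on a parabolic neighborhood of spatial scale $\epsilon r$, and then use the chain-rule/scaling identities in which the large radial coordinate ${\sim}p$ multiplies the radial derivatives of $u$, producing exactly the gains $1/(p\epsilon r)=\epsilon$ and $1/(p(\epsilon r)^2)=1/r$. The only differences are cosmetic: the paper evaluates the identities at varying $t$ over a forward-in-time window anchored at the inner edge and controls the zeroth-order term via the height estimate of \cite[Lemma 3]{CM1}, whereas you center the parabolic cylinder at the target point with $t=0$ interior and close the $\abs{\partial_1 u}$ bound by integrating from $(p,0)$ and absorbing, both of which are legitimate.
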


\begin{proof}
As $0<\epsilon<1/4$ and $p\epsilon>4$, we can define
\begin{equation}
v: (p-2r,p-2r+4\epsilon r)\times (-2\epsilon r,2\epsilon r)\times [0,T]\to\RR
\end{equation}
by
\begin{equation}
v(p_1,p_2,t)=\sqrt{1-t}\, u\left(\frac{p_1}{\sqrt{1-t}},\frac{p_2}{\sqrt{1-t}}\right),
\end{equation}
where 
\begin{equation}
0<T=1-\left(\frac{p-2r+2\epsilon r}{p+r}\right)^2<6\epsilon^2r^2<\frac{3}{8}.
\end{equation} 
Since ${\rm graph}(u)=\Sigma$ is a self-shrinker, ${\rm graph}(v(\cdot,t))$ moves by mean curvature.

By our assumptions, we have that 
\begin{equation}
T^\prime=1-\left(\frac{p-2r+2\epsilon r}{p-r}\right)^2\geq\frac{\epsilon^2r^2}{2}.
\end{equation}
Note that $v(p,0,t_0)=u(p,0)=0$ for some $T^\prime<t_0<T$ and
\begin{equation}
\abs{\grad v(p_1,p_2,t)}=\abs{\grad u \left(\frac{p_1}{\sqrt{1-t}},\frac{p_2}{\sqrt{1-t}}\right)}<C_2.
\end{equation}
Thus the height estimate \cite[Lemma 3]{CM1} and the higher regularity \cite[Theorem 3.4]{EH2} imply that for some $C$ depending only on $C_2$,
\begin{equation}\label{eqn:highder}
\abs{v}\leq C\epsilon r\quad\mbox{and}\quad\abs{\partial_t v}+\sum_{i,j=1}^2\abs{\partial_i\partial_j v}+\epsilon r\sum_{i=1}^2\abs{\partial_t\partial_i v}\leq\frac{C}{\epsilon r}
\end{equation}
on 
\begin{equation}
(p-2r+\epsilon r,p-2r+3\epsilon r)\times (-\epsilon r,\epsilon r)\times [T^\prime,T].
\end{equation}

Observe that given $|p_1-p|<r$ and $|p_2|<\epsilon r$, there exists $t\in (T^\prime,T)$ such that 
\begin{equation}
\left(\sqrt{1-t}\, p_1,\sqrt{1-t}\, p_2\right)\in (p-2r+\epsilon r,p-2r+3\epsilon r)\times (-\epsilon r,\epsilon r).
\end{equation}
And by the definition of $v$ and the chain rule of taking derivatives,
\begin{equation}
\begin{split}
\partial_t v(p_1,p_2,t) & =\frac{-v(p_1,p_2,t)}{2\left(1-t\right)}+\frac{p_1}{2\left(1-t\right)}\, \partial_1 u\left(\frac{p_1}{\sqrt{1-t}},\frac{p_2}{\sqrt{1-t}}\right)\\
& \quad +\frac{p_2}{2\left(1-t\right)}\, \partial_2 u\left(\frac{p_1}{\sqrt{1-t}},\frac{p_2}{\sqrt{1-t}}\right).
\end{split}
\end{equation}
Similarly,
\begin{align}
\partial_i\partial_j v(p_1,p_2,t) & =\frac{1}{\sqrt{1-t}}\, \partial_i\partial_j u\left(\frac{p_1}{\sqrt{1-t}},\frac{p_2}{\sqrt{1-t}}\right),\quad\mbox{and}\\
\begin{split}
\partial_t\partial_i v(p_1,p_2,t) & =\frac{p_1}{2\left(1-t\right)^{\frac{3}{2}}}\, \partial_1\partial_i u\left(\frac{p_1}{\sqrt{1-t}},\frac{p_2}{\sqrt{1-t}}\right)\\
&\quad+\frac{p_2}{2\left(1-t\right)^{\frac{3}{2}}}\, \partial_2\partial_i u\left(\frac{p_1}{\sqrt{1-t}},\frac{p_2}{\sqrt{1-t}}\right).
\end{split}
\end{align}
Therefore the lemma follows immediately from \eqref{eqn:highder}.
\end{proof}

Finally, combining Lemmas \ref{lem:radext} and \ref{lem:imposcnormal} with the total curvature bound, we establish the bootstrap machinery which is the key to the proof of Theorem \ref{thm:sheeting}. 

\begin{lemma}\label{lem:iteration}
There exists an $\epsilon_3\in (0,1)$ such that given $\kappa>0$ and $\delta\in (0,1)$, there exists an $\eta_1\in (0,1)$ such that for $0<\epsilon<\epsilon_3$ and $\xx_1\in\RR^3$ with $\epsilon^2|\xx_1|>1$, if $\Sigma$ is a properly embedded self-shrinker in $B_{r_1}(\xx_1)$ with $\xx_1\in\Sigma$ and $\partial\Sigma\subset\partial B_{r_1}(\xx_1)$ for $r_1=1/(\epsilon^2|\xx_1|)$, and if 
\begin{equation}\label{eqn:split2}
\abs{H(\xx)}<\epsilon_3\abs{\xx}\quad\mbox{for all $\xx\in\Sigma$},
\end{equation}
\begin{equation}\label{eqn:oscnormal1}
\sup_{\substack{\xx,\yy\in\Sigma \\ d_\Sigma(\xx,\yy)<\epsilon r_1}}\abs{\nn(\xx)-\nn(\yy)}<\delta\epsilon_3,\quad\mbox{and}
\end{equation}
\begin{equation}\label{eqn:totalcurv1}
\int_\Sigma\abs{A}^2d\mathcal{H}^2<\kappa,
\end{equation}
then 
\begin{equation}
\sup_{\substack{\xx\in\Sigma \\ d_\Sigma(\xx,\xx_1)<\eta_1r_1}}\abs{\nn(\xx)-\nn(\xx_1)}<\delta\epsilon_3.
\end{equation}
\end{lemma}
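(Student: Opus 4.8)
The plan is a continuity argument in the radius of the intrinsic ball $B_\rho^\Sigma(\xx_1):=\{\xx\in\Sigma:d_\Sigma(\xx,\xx_1)<\rho\}$ about $\xx_1$, built from three ingredients: Lemma \ref{lem:radext} to produce graphical pieces, Lemma \ref{lem:imposcnormal} to make $\nn$ rigid in the radial direction, and the total curvature bound \eqref{eqn:totalcurv1} (through a co-area estimate) to handle the transverse direction. Let $\NN_1$ be the unit normal to the plane $\NN_1^\perp:=\spa\{\xx_1,\nn(\xx_1)\times\xx_1\}$. By \eqref{eqn:shrinker} and \eqref{eqn:split2}, $\abs{\la\xx_1,\nn(\xx_1)\ra}=2\abs{H(\xx_1)}<2\epsilon_3\abs{\xx_1}$, so $\abs{\nn(\xx_1)-\NN_1}\le C\epsilon_3$. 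If $\epsilon\ge\eta_1$ then $\eta_1 r_1\le\epsilon r_1$ and the conclusion is immediate from \eqref{eqn:oscnormal1}, so assume $\epsilon<\eta_1$. Let $\rho_0$ be the supremum of those $\rho\in[\epsilon r_1,\eta_1 r_1]$ for which $\sup_{B_\rho^\Sigma(\xx_1)}\abs{\nn-\nn(\xx_1)}<\delta\epsilon_3$; by \eqref{eqn:oscnormal1} this set contains $\epsilon r_1$, and it suffices to show $\rho_0=\eta_1 r_1$. Suppose instead $\rho_0<\eta_1 r_1$; then, by continuity of $\nn$ and monotonicity in $\rho$, $\sup_{\overline{B^\Sigma_{\rho_0}(\xx_1)}}\abs{\nn-\nn(\xx_1)}=\delta\epsilon_3$, and I will reach a contradiction by bounding this quantity by a fixed fraction of $\delta\epsilon_3$.

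Since $\epsilon_3$ is small, on $B^\Sigma_{\rho_0}(\xx_1)$ we have $\abs{\nn-\NN_1}\le\delta\epsilon_3+C\epsilon_3<1$, so this region is the graph of a function over a (nearly round) convex domain $\Omega\subset\NN_1^\perp$ with gradient $\le C_2$, about the projection $\bar\xx_1$ of $\xx_1$. Work in coordinates $(p_1,p_2)$ on $\NN_1^\perp$ with $p_1$ along $\xx_1$ (the ``radial'' direction) and $p_2$ along $\nn(\xx_1)\times\xx_1$ (the ``transverse'' direction). As $\Sigma$ is a self-shrinker on the whole of $B_{r_1}(\xx_1)$, which dwarfs $B^\Sigma_{\rho_0}(\xx_1)$, Lemma \ref{lem:radext} at $\xx_1$ (applied with its $\epsilon_2$ equal to $\delta\epsilon_3$; its hypotheses \eqref{eqn:split} and \eqref{eqn:oscnormal} follow from \eqref{eqn:split2} and \eqref{eqn:oscnormal1}) already gives, over the plane $\NN_1^\perp$ itself, a graph of gradient $\le C_2$ on a rectangle of radial half-width $\sim\eta_0 r_1$ and transverse half-width $\sim\eta_0\epsilon r_1$. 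Applying Lemma \ref{lem:radext} also at the graph points over $(\abs{\xx_1},s_k)$, $s_k$ ranging over $\Omega$'s $p_2$-extent with spacing $\sim\eta_0\epsilon r_1$ and with scale parameter $\sim\epsilon$ (so the new balls stay inside $B_{r_1}(\xx_1)$; each new plane is within $\sim\delta\epsilon_3$ of $\NN_1^\perp$ because these points lie in $B^\Sigma_{\rho_0}(\xx_1)$), and gluing by embeddedness, yields a single graph $\widetilde U$ over $\NN_1^\perp$ with gradient $\le2C_2$ on a region of radial half-width $\sim\eta_0 r_1$ and transverse half-width $\gtrsim\rho_0$. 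Feeding $\widetilde U$ into Lemma \ref{lem:imposcnormal} at base points along these strips (with scale $\sim\epsilon/\eta_0$, so the required domain fits) gives $\abs{\partial_1\widetilde U}\le C\epsilon/\eta_0$ and $\abs{\partial_1^2\widetilde U}+\abs{\partial_1\partial_2\widetilde U}\le C/(\eta_0^2 r_1)$; since $\abs{A}\le C(C_2)\abs{D^2\widetilde U}$, and the $p_1$- and $p_2$-derivatives of $\nn$ are controlled respectively by $\abs{\partial_1^2\widetilde U}+\abs{\partial_1\partial_2\widetilde U}$ and by $\abs{D^2\widetilde U}$, we obtain on $\widetilde U$ the two estimates
\[
\abs{\partial_{p_1}\nn}\le\frac{C_\sharp}{r_1}\quad\text{(radial rigidity)},\qquad \abs{\partial_{p_2}\nn}\le C_\sharp\abs{A},
\]
with $C_\sharp=C_\sharp(C_2,\eta_0)$.

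Now bring in \eqref{eqn:totalcurv1}. Let $V\subset\widetilde U$ be the portion over the rectangle $\{\abs{p_1-\abs{\xx_1}}<2\rho_0\}\times\{\abs{p_2}<2\rho_0\}$, of area $\le C\rho_0^2$. Cauchy--Schwarz and Fubini over the transverse segments $L_t=\{p_1=t\}$ give
\[
\int\Bigl(\int_{L_t\cap V}\abs{A}\,dp_2\Bigr)\,dt\ \le\ \int_V\abs{A}\,d\mathcal H^2\ \le\ \abs{V}^{1/2}\Bigl(\int_V\abs{A}^2\Bigr)^{1/2}\ \le\ C\rho_0\,\kappa^{1/2},
\]
so the ``bad set'' $E=\{t:\int_{L_t\cap V}\abs{A}\,dp_2>M\}$ has $\abs{E}\le C\rho_0\kappa^{1/2}/M$. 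Fix $M$ with $C_\sharp M<\tfrac14\delta\epsilon_3$, and then fix $\eta_1$ small, depending only on $\kappa$, $\delta$ and the preceding constants, so that $C_\sharp\eta_1<\tfrac14\delta\epsilon_3$ and $\abs{E}\le C\eta_1 r_1\kappa^{1/2}/M<\tfrac{\delta\epsilon_3}{8C_\sharp}r_1=:\lambda$. Then every radial level $q_1$ admits a ``good'' level $q_1'$ with $\abs{q_1'-q_1}<\lambda$ and $\int_{L_{q_1'}\cap V}\abs{A}\le M$. Finally, given $\xx\in B^\Sigma_{\rho_0}(\xx_1)$ with coordinates $(q_1,q_2)$, choose such a $q_1'$ and transport $\nn$ from $\xx_1$ along the path $\bar\xx_1=(\abs{\xx_1},0)\to(q_1',0)\to(q_1',q_2)\to(q_1,q_2)$, whose vertices all lie in $\widetilde U$: the two radial segments have lengths $<\rho_0+\lambda\le\eta_1 r_1+\lambda$ and $<\lambda$, contributing $<\tfrac14\delta\epsilon_3$ each by radial rigidity (since $C_\sharp\eta_1,\,C_\sharp\lambda/r_1<\tfrac18\delta\epsilon_3$), and the transverse segment lies over a good level, contributing $\le C_\sharp M<\tfrac14\delta\epsilon_3$. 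Hence $\abs{\nn(\xx)-\nn(\xx_1)}<\delta\epsilon_3$ uniformly in $\xx$, contradicting the equality above; so $\rho_0=\eta_1 r_1$, and the same estimate at $\rho_0=\eta_1 r_1$ proves the lemma.

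The hardest part is the transverse direction: radially, Lemma \ref{lem:imposcnormal} delivers a scale-$r_1$ bound on $\abs{A}$ outright, but transversely $\abs{A}$ may be as large as $\sim1/(\epsilon r_1)$, and only the integrated bound \eqref{eqn:totalcurv1} is at hand; turning it into a pointwise oscillation estimate forces the rerouting device, and keeping $\eta_1$ independent of $\epsilon$ forces both the case split (trivial once $\epsilon\gtrsim\eta_1$) and the continual verification that the intrinsic balls entering Lemmas \ref{lem:radext} and \ref{lem:imposcnormal} remain inside $B_{r_1}(\xx_1)$---this is why Lemma \ref{lem:radext} must be invoked at scale $r_1$ (rather than at the scale of $B^\Sigma_{\rho_0}(\xx_1)$), to supply enough radial room for the rerouting and for Lemma \ref{lem:imposcnormal}. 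A secondary technical point is that the graphical pieces produced by the repeated use of Lemma \ref{lem:radext} must assemble into a single graph over the fixed plane $\NN_1^\perp$, and it is precisely the standing hypothesis of the continuity argument, $\sup_{B^\Sigma_{\rho_0}(\xx_1)}\abs{\nn-\nn(\xx_1)}\le\delta\epsilon_3$, that makes this possible; some additional care is needed near $\partial B^\Sigma_{\rho_0}(\xx_1)$, where the rerouting uses the extra radial extent of $\widetilde U$.
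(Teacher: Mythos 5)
Your overall scheme is viable and is organized differently from the paper: the paper fixes a thin rectangle of radial half-width ${\sim}\delta\epsilon_3 r_1$, extends a graph transversely to a maximal width $l$, and uses radial rigidity plus Cauchy--Schwarz to show that if $l$ were small the normal would be forced to turn by ${\sim}\delta\epsilon_3$ on \emph{every} transverse slice, contradicting \eqref{eqn:totalcurv1}; your continuity-in-$\rho$ argument with a Chebyshev selection of ``good'' transverse slices and a rerouted transport path is essentially the contrapositive of that step, built from the same three ingredients (Lemma \ref{lem:radext}, Lemma \ref{lem:imposcnormal}, and \eqref{eqn:totalcurv1} via slicing). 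The quantitative outcome ($\eta_1$ of size a power of $\delta\epsilon_3$ over $\kappa$) is comparable, and your preliminary reductions (the case $\epsilon\geq\eta_1$, the tilt bound $\abs{\nn(\xx_1)-\NN_1}\lesssim\epsilon_3$, applying Lemma \ref{lem:imposcnormal} at rescaled parameter ${\sim}\epsilon/\eta_0$) are all sound.

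The genuine gap is the coverage step: you need every point $\xx$ of $\overline{B^\Sigma_{\rho_0}(\xx_1)}$ to lie on the glued graph $\widetilde U$, and you need the rerouting path from $(\abs{\xx_1},0)$ to the coordinates of $\xx$ to stay inside $\widetilde U$. As written this rests on two unjustified assertions. First, ``$\abs{\nn-\NN_1}<1$ on $B^\Sigma_{\rho_0}(\xx_1)$, so this region is the graph of a function over a convex domain $\Omega$'': smallness of the normal oscillation on a geodesic ball makes the projection to $\NN_1^\perp$ a local diffeomorphism but does not by itself give injectivity (a flat spiral ramp has nearly constant normal and is not a graph), so $\Omega$ and its ``$p_2$-extent'' are not yet defined. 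Second, even granting graphality, your strips are centered only at points over $(\abs{\xx_1},s_k)$ obtained by lifting the central transverse line; since the transverse gradient bound is the fixed constant $C_2$ (not small), that lift of length $<\rho_0$ only reaches $\abs{s}\lesssim\rho_0/\sqrt{1+C_2^2}$, whereas the target point $\xx$ can have transverse coordinate $\abs{q_2}$ arbitrarily close to $\rho_0$; ``transverse half-width $\gtrsim\rho_0$'' with a constant $<1$ is exactly not enough for the transverse leg of your path, and this is not a matter of the radial slack you mention. The repair is within your framework: apply Lemma \ref{lem:radext} at a chain of points along the geodesic from $\xx_1$ to $\xx$ (every such point satisfies $\abs{\nn(\yy)-\nn(\xx_1)}<\delta\epsilon_3$, so it is an admissible center), so that $\xx\in\widetilde U$ by construction and, since each strip has radial reach ${\sim}\eta_0^2 r_1\gg\rho_0+\lambda$ and the $p_2$-values of the projected geodesic fill $[0,q_2]$ by the intermediate value theorem, every transverse segment $\{p_1=t\}\times[0,q_2]$ with $\abs{t-\abs{\xx_1}}\leq\rho_0+\lambda$ lies in $\widetilde U$; alternatively, follow the paper and build the graph by maximal extension, which makes single-valuedness and containment of the geodesic ball automatic. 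With that repair (and routine bookkeeping: the slab $V$ must be taken over the radial window of half-width $\rho_0+\lambda$, since $\lambda$ may exceed $\rho_0$), your argument closes.
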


\begin{proof}
First we claim that for $\epsilon_3$ sufficiently small, there exists an $\eta\in (0,1)$ depending only on $C_2$ such that given $\yy\in\Sigma\cap B_{r_1/2}(\xx_1)$ with $|\nn(\yy)-\nn(\xx_1)|<\delta\epsilon_3$, the self-shrinker $\Sigma$ contains the graph of a function $u$ defined on 
\begin{equation}
\eta\delta\epsilon_3r_1\left\{I_{\xx_1}(-1,1)\times I_{\vv_1}(-\epsilon,\epsilon)\right\}+\yy,
\end{equation}
where $\vv_1=\nn (\xx_1)\times\xx_1$, satisfying that $u(\yy)=0$ and 
\begin{equation}
\sup_{\xx\in {\rm graph}(u)}\abs{\nn(\xx)-\nn(\yy)}<\frac{\delta\epsilon_3}{8}.
\end{equation}

For $\yy\in\Sigma\cap B_{r_1/2}(\xx_1)$, as $\epsilon<1$ and $\epsilon^2|\xx_1|>1$, it is easy to see that $2|\yy|>|\xx_1|$ and $B_{\rho}(\yy)\subset B_{r_1}(\xx_1)$ for $\rho=1/(4\epsilon^2|\yy|)$. We choose $\epsilon_3<\min\{1/4,\epsilon_2/2,\eta_0/32\}$ so that Lemmas \ref{lem:radext} and \ref{lem:imposcnormal} are applicable to $\Sigma\cap B_{\rho}(\yy)$. Thus there exist constants $\tilde{\eta}\in (0,1)$ and $C>0$, depending on $\eta_0$, $C_2$ and $C_3$, and a function $\tilde{u}$ defined on 
\begin{equation}
\tilde{\eta}\delta\epsilon_3r_1\left\{I_{\yy}(-1,1)\times I_{\vv}(-\epsilon,\epsilon)\right\}+\yy,
\end{equation}
where $\vv=\nn (\yy)\times\yy$, such that ${\rm graph}(\tilde{u})\subset\Sigma$, $\tilde{u}(\yy)=0$, 
\begin{equation}\label{eqn:height2}
\sup_{\xx\in {\rm graph}(\tilde{u})}\abs{\tilde{u}(\xx)}<C\delta\epsilon_3\epsilon r_1, \quad\mbox{and}
\end{equation} 
\begin{equation}\label{eqn:oscnormal2}
\sup_{\xx\in {\rm graph}(\tilde{u})}\abs{\nn(\xx)-\nn(\yy)}<\frac{\delta\epsilon_3}{8}.
\end{equation}

To conclude the proof of the claim, we need to rewrite ${\rm graph}(\tilde{u})$ as the graph over the plane $\PP_1$ through $\yy$ spanned by $\xx_1$ and $\vv_1$. Consider the curve 
\begin{equation}
\gamma: (-\tilde{\eta}\delta\epsilon_3r_1,\tilde{\eta}\delta\epsilon_3r_1)\to\RR^3, \quad\gamma(p)=\yy+\frac{p\yy}{\abs{\yy}}+\tilde{u}\left(\yy+\frac{p\yy}{\abs{\yy}}\right)\NN,
\end{equation}
where $\NN$ is the unit normal to the plane spanned by $\yy$ and $\vv$ so that $\la\NN,\nn(\yy)\ra>0$. First we show that the projection of $\gamma$ onto $\PP_1$ stays close to the axis parallel to $\xx_1$. Note that
\begin{equation}
\abs{\frac{\yy}{\abs{\yy}}-\frac{\xx_1}{\abs{\xx_1}}}<2\epsilon^2r_1^2<2\epsilon^2,
\end{equation}
as $\yy\in B_{r_1/2}(\xx_1)$ and $r_1<1$, and \eqref{eqn:split2} gives
\begin{equation}\label{eqn:diffnormal}
\begin{split}
\abs{\NN-\NN_1} & \leq\abs{\NN-\nn(\yy)}+\abs{\nn(\yy)-\nn(\xx_1)}+\abs{\NN_1-\nn(\xx_1)}\\
& <2\left(2-2\sqrt{1-4\epsilon_3^2}\right)^{\frac{1}{2}}+\delta\epsilon_3,
\end{split}
\end{equation}
where $\NN_1$ is the unit normal to $\PP_1$. Thus, together with \eqref{eqn:height2}, it follows that
\begin{equation}\label{eqn:width}
\abs{\left\la\frac{\vv_1}{\abs{\vv_1}},\gamma(p)-\yy\right\ra}\leq\abs{p}\abs{\frac{\yy}{\abs{\yy}}-\frac{\xx_1}{\abs{\xx_1}}}+\sup\abs{\tilde{u}}\abs{\NN-\NN_1}<C^\prime\delta\epsilon_3^2\epsilon r_1
\end{equation}
for some $C^\prime$ depending only on $C$. Similarly,
\begin{equation}\label{eqn:length}
\abs{\left\la\frac{\xx_1}{\abs{\xx_1}},\gamma(p)-\yy\right\ra-p}<C^\prime\delta\epsilon_3^2\epsilon r_1.
\end{equation}
Next it follows from \eqref{eqn:split}, \eqref{eqn:oscnormal2} and \eqref{eqn:diffnormal} that for $\xx\in {\rm graph}(\tilde{u})$,
\begin{equation}\label{eqn:gradient1}
\begin{split}
\left\la\NN_1,\nn(\xx)\right\ra & \geq\left\la\NN,\nn(\yy)\right\ra-\abs{\nn(\xx)-\nn(\yy)}-\abs{\NN-\NN_1}\\
& >\sqrt{1-4\epsilon_3^2}-2\left(2-2\sqrt{1-4\epsilon_3^2}\right)^{\frac{1}{2}}-2\delta\epsilon_3.
\end{split}
\end{equation}
Note that ${\rm graph}(\tilde{u})$ contains the tubular neighborhood of $\gamma$ in $\Sigma$ with radius $\tilde{\eta}\delta\epsilon_3\epsilon r_1$. Hence, combining \eqref{eqn:width}, \eqref{eqn:length} and \eqref{eqn:gradient1}, there exists an $\epsilon_3>0$ depending only on $\tilde{\eta}$ and $C^\prime$ such that ${\rm graph}(\tilde{u})$ contains the graph of a function $u$ on
\begin{equation}
\frac{1}{2}\tilde{\eta}\delta\epsilon_3r_1\left\{I_{\xx_1}(-1,1)\times I_{\vv_1}(-\epsilon,\epsilon)\right\}+\yy,
\end{equation}
proving the claim.

Now let $l>0$ be the maximal number such that there exists a function $u^\prime$ on 
\begin{equation}
\Omega=\eta\delta\epsilon_3r_1\left\{I_{\xx_1}(-1,1)\times I_{\vv_1}(-l,l)\right\}+\xx_1
\end{equation}
satisfying that ${\rm graph}(u^\prime)\subset\Sigma$, $u^\prime(\xx_1)=0$ and 
\begin{equation}
\sup_{\xx\in {\rm graph}(u^\prime)}\abs{\nn(\xx)-\nn(\xx_1)}<\delta\epsilon_3. 
\end{equation}
Then the claim implies that $l\geq\epsilon$ and by \eqref{eqn:split2}, $|\grad u^\prime|<1$ for $\epsilon_3$ sufficiently small.

Via the function $u^\prime$, we identify $(p_1,p_2)\in\Omega$ with
\begin{equation}
\xx=\frac{p_1\xx_1}{\abs{\xx_1}}+\frac{p_2\vv_1}{\abs{\vv_1}}+u^\prime(p_1,p_2)\NN_1\in {\rm graph}(u^\prime),
\end{equation}
and $\nn(p_1,p_2)=\nn(\xx)$. Let $\alpha=\eta\delta\epsilon_3r_1$ and $l^\prime=l-\epsilon/2$. If $l<1/4$, then $|u^\prime(\abs{\xx_1},\alpha l^\prime)|<r_1/4$ and so $(|\xx_1|,\alpha l^\prime)\in\Sigma\cap B_{r_1/2}(\xx_1)$. Thus by the claim and the maximality of $l$, we may assume that 
\begin{equation}
\abs{\nn(|\xx_1|,\alpha l^\prime)-\nn(|\xx_1|,0)}>\frac{\delta\epsilon_3}{2},
\end{equation}
which further implies that 
\begin{equation}
\abs{\nn(p_1,\alpha l^\prime)-\nn(p_1,0)}>\frac{\delta\epsilon_3}{4},\quad\mbox{if }\abs{p_1-\abs{\xx_1}}<\alpha. 
\end{equation}
Hence we get that
\begin{equation}
\begin{split}
\frac{1}{2}\alpha\delta\epsilon_3< & \int_{\abs{\xx_1}-\alpha}^{\abs{\xx_1}+\alpha}\abs{\nn(p_1,\alpha l^\prime)-\nn(p_1,0)}dp_1\leq\sqrt{2}\int_{{\rm graph}(u^\prime)}\abs{A} d\mathcal{H}^2\\
\leq &\, \sqrt{2}\left(\int_{{\rm graph}(u^\prime)}\abs{A}^2 d\mathcal{H}^2\right)^{\frac{1}{2}}\left(\int_{{\rm graph}(u^\prime)}1\, d\mathcal{H}^2\right)^{\frac{1}{2}}\leq 4\alpha\left(l\kappa\right)^{\frac{1}{2}}.
\end{split}
\end{equation}
Here we use the Cauchy-Schwarz inequality in the penultimate inequality and the total curvature bound \eqref{eqn:totalcurv1} and $|\grad u^\prime|<1$ in the last one. This implies that $l\geq(\delta\epsilon_3)^2/(2^6\kappa)$. 

Therefore, for $\epsilon_3$ sufficiently small, choosing $\eta_1=\min\{\eta\delta\epsilon_3,\eta\delta^2\epsilon_3^3/(2^6\kappa)\}$, the lemma follows from that the geodesic ball in $\Sigma$ centered at $\xx_1$ with radius $\eta_1r_1$ is contained in ${\rm graph}(u^\prime)$.
\end{proof}

Now we present the proof of Theorem \ref{thm:sheeting} by applying Lemma \ref{lem:iteration} repeatedly.

\begin{proof}[Proof of Theorem \ref{thm:sheeting}]
We define $\delta>0$ by 
\begin{equation}\label{eqn:delta}
\left(\delta\epsilon_3\right)^2=2-\frac{2}{\sqrt{1+\delta_0^2}}.
\end{equation}
Since it suffices to prove the theorem for $\delta_0$ small, we may assume that $\delta<1$. Let $\eta_1$ be the constant in Lemma \ref{lem:iteration} for $\kappa=\kappa_0$ and $\delta$ be given by \eqref{eqn:delta}. And let $\epsilon=2C_1\epsilon_0/(\delta\epsilon_3)$ and $K$ be the maximum positive integer such that $4^{K+1}<\epsilon^2|\xx_0|$. Finally we set $\rho_0=1/2$ and if $K>1$,
\begin{equation}
\rho_k=\rho_{k-1}-\frac{2^{2k-3}}{\epsilon^2\abs{\xx_0}}\quad\mbox{for $1\leq k\leq K-1$}.
\end{equation}

Now we choose 
\begin{equation}
\epsilon_0=\min\left\{\epsilon_1,\epsilon_3,\frac{\eta_1\delta\epsilon_3}{16C_1},\frac{\delta\epsilon_3^2}{4C_1}\right\}\quad\mbox{and}\quad R_0=\max\left\{\frac{2}{\epsilon_0},\frac{64}{\epsilon^2}\right\}
\end{equation}
so to apply Lemmas \ref{lem:slcurv} and \ref{lem:iteration}. Then $K>1$ and as $4^{K+1}<\epsilon^2|\xx_0|$, 
\begin{equation}
\rho_{K-1}=\frac{1}{2}-\sum_{k=1}^{K-1}\frac{2^{2k-3}}{\epsilon^2\abs{\xx_0}}>\frac{1}{2}-\frac{2^{2K-3}}{3\epsilon^2\abs{\xx_0}}>\frac{1}{4}.
\end{equation}
We claim that for $0\leq k\leq K-1$, given $\xx_1\in\Sigma\cap B_{\rho_k}(\xx_0)$, 
\begin{equation}\label{eqn:oscnormal3}
\sup_{\substack{\xx\in\Sigma\cap B_{\rho_k}(\xx_0) \\ d_\Sigma(\xx,\xx_1)<2^k/\left(\epsilon\abs{\xx_1}\right)}}\abs{\nn(\xx)-\nn(\xx_1)}<\delta\epsilon_3.
\end{equation}

To show the claim, we use an induction argument. For $k=0$, \eqref{eqn:oscnormal3} follows from our assumption \eqref{eqn:slmc} and Lemma \ref{lem:slcurv}. Inductively, suppose that \eqref{eqn:oscnormal3} holds true for $k-1$. Given $\xx_1\in\Sigma\cap B_{\rho_k}(\xx_0)$, as $|\xx|\leq 2|\yy|$ for any $\xx,\yy\in B_1(\xx_0)$, $B_{r_1}(\xx_1)\subset B_{\rho_{k-1}}(\xx_0)$ for $r_1=4^{k-2}/(\epsilon^2|\xx_1|)$ and
\begin{equation}
\sup_{\substack{\xx,\yy\in\Sigma\cap B_{r_1}(\xx_1) \\ d_\Sigma(\xx,\yy)<2^{2-k}\epsilon r_1}}\abs{\nn(\xx)-\nn(\yy)}<\delta\epsilon_3.
\end{equation}
Together with \eqref{eqn:slmc}, it follows from Lemma \ref{lem:iteration} that 
\begin{equation}
\sup_{\substack{\xx\in\Sigma \\ d_\Sigma(\xx,\xx_1)<\eta_1r_1}}\abs{\nn(\xx)-\nn(\xx_1)}<\delta\epsilon_3.
\end{equation}
Thus \eqref{eqn:oscnormal3} for $k$ follows immediately from that 
\begin{equation}
\eta_1r_1\geq\left(\frac{\eta_1\delta\epsilon_3}{16C_1\epsilon_0}\right)\left(\frac{2^k}{\epsilon\abs{\xx_1}}\right)\geq\frac{2^k}{\epsilon\abs{\xx_1}}.
\end{equation}

Finally let $r_0^\prime=4^{K-2}/(\epsilon^2|\xx_0|)$. Thus by the definition of $K$, we have that $2^{-8}<r_0^\prime<2^{-6}$. Hence, applying Lemma \ref{lem:iteration} to $\Sigma\cap B_{r^\prime_0}(\xx_0)$, it follows from \eqref{eqn:slmc} and \eqref{eqn:oscnormal3} for $k=K-1$ that
\begin{equation}
\sup_{\substack{\xx\in\Sigma \\ d_\Sigma(\xx,\xx_0)<\eta_1r^\prime_0}}\abs{\nn(\xx)-\nn(\xx_0)}<\delta\epsilon_3.
\end{equation}
Therefore, by the definition of $\delta$, it follows that $\Sigma$ contains the graph of a function $u$ on the disk $D_{r_0}(\xx_0)\subset T_{\xx_0}\Sigma$ with $u(\xx_0)=0$ and $|\grad u|<\delta_0$ for $r_0=\eta_1r_0^\prime/2$ and the theorem follows immediately from that $B_{r_0}(\xx_0)\subset D_{r_0}(\xx_0)\times\RR$. 
\end{proof}

\section{Applications of Theorem \ref{thm:sheeting}}
There are several interesting consequences of Theorem \ref{thm:sheeting} on the geometry and asymptotic behavior of $2$-dimensional self-shrinkers; see Theorems \ref{thm:seqsheeting} and \ref{thm:bmc} and Corollary \ref{cor:cylinder} in the introduction. This section is devoted to their proofs.

First, using the monotonicity formula and the local Gauss-Bonnet estimate, we can apply Theorem \ref{thm:sheeting} to prove Theorem \ref{thm:seqsheeting} on the asymptotic behavior of properly embedded self-shrinkers with finite genus.

\begin{proof}[Proof of Theorem \ref{thm:seqsheeting}]
First we show that there is an increasing sequence $\lambda_i\to\infty$ with $\lambda_{i+1}/\lambda_i\to 1$ such that $\Sigma-\lambda_i\vv_0$ asymptotically splits off a line in the $L^2$ sense. To achieve this, we define $\Sigma_s$ by
\begin{equation}
\Sigma_s=\Sigma-\e^{\frac{s}{2}}\vv_0.
\end{equation}
Then the self-shrinker equation \eqref{eqn:shrinker} gives that for $s\geq 0$ and $\xx\in\Sigma_s$,
\begin{equation}\label{eqn:rmcf}
\left\la\partial_s\xx,\nn\right\ra=-H+\frac{1}{2}\left\la\xx,\nn\right\ra.
\end{equation}
Thus
\begin{equation}\label{eqn:monotone}
\frac{d}{ds}\int_{\Sigma_s}\e^{-\frac{\abs{\xx}^2}{4}}d\mathcal{H}^2=-\int_{\Sigma_s}\abs{H-\frac{1}{2}\left\la\xx,\nn\right\ra}^2\e^{-\frac{\abs{\xx}^2}{4}}d\mathcal{H}^2.
\end{equation}
Integrating \eqref{eqn:monotone}, it follows from \cite[Theorem 1.1]{DX} that
\begin{equation}
\int_0^\infty\int_{\Sigma_s}\abs{H-\frac{1}{2}\left\la\xx,\nn\right\ra}^2\e^{-\frac{\abs{\xx}^2}{4}}d\mathcal{H}^2ds<\int_{\Sigma_0}\e^{-\frac{\abs{\xx}^2}{4}}d\mathcal{H}^2<\infty.
\end{equation}
Hence there exists an increasing sequence $s_i$ of positive numbers such that 
\begin{equation}
\int_{s_i}^\infty\int_{\Sigma_s}\abs{H-\frac{1}{2}\left\la\xx,\nn\right\ra}^2\e^{-\frac{\abs{\xx}^2}{4}}d\mathcal{H}^2ds<\frac{1}{i^2}.
\end{equation} 
Therefore, for each $i$, there exists a sequence $s_{i,j}$ of numbers in $[s_i,s_{i+1})$ such that 
\begin{equation}
\int_{\Sigma_{s_{i,j}}}\abs{H-\frac{1}{2}\left\la\xx,\nn\right\ra}^2\e^{-\frac{\abs{\xx}^2}{4}}d\mathcal{H}^2<\frac{1}{i}, \quad s_i+\frac{j-1}{i}\leq s_{i,j}\leq s_i+\frac{j}{i}.
\end{equation}
Relabeling $s_{i,j}$, we get an increasing sequence $\lambda_i\to\infty$ with $\lambda_{i+1}/\lambda_i\to 1$ such that 
\begin{equation}\label{eqn:intsplit}
\frac{\lambda_i^2}{4}\int_{\Sigma-\lambda_i\vv_0}\left\la\vv_0,\nn\right\ra^2\e^{-\frac{\abs{\xx}^2}{4}}d\mathcal{H}^2
=\int_{\Sigma-\lambda_i\vv_0}\abs{H-\frac{1}{2}\left\la\xx,\nn\right\ra}^2\e^{-\frac{\abs{\xx}^2}{4}}d\mathcal{H}^2\to 0.
\end{equation}
Here we use the self-shrinker equation \eqref{eqn:shrinker} in the above equality.

In order to apply Theorem \ref{thm:sheeting}, we derive the total curvature bound and improved estimate of the mean curvature. First the local Gauss-Bonnet estimate of Ilmanen \cite[Theorem 3]{I1} together with \eqref{eqn:intsplit} and \cite[Theorem 1.1]{DX} implies that 
\begin{equation}\label{eqn:totalcurvl}
\int_{\Sigma\cap B_k(\lambda_i\vv_0)}\abs{A}^2d\mathcal{H}^2<C
\end{equation}
for some $C$ depending only on $\mathbf{F}[\Sigma]$, the genus bound of $\Sigma$ and $k$. Second, by a recent result of Song \cite[Theorem 19]{So}, there exists a $C^\prime>0$ such that
\begin{equation}\label{eqn:lcurv}
\abs{A(\xx)}\leq C^\prime\left(1+\abs{\xx}\right)\quad\mbox{for all $\xx\in\Sigma$}.
\end{equation}
To be self-contained, we include Song's proof with minor changes in Appendix B. Now given $\xx_0\in\RR^3\setminus B_2$, we define $\tilde{\Sigma}=|\xx_0|(\Sigma-\xx_0)$. Then 
\begin{equation}
\lap\left\la\vv_0,\nn\right\ra-\frac{1}{2\abs{\xx_0}^2}\left\la\abs{\xx_0}\xx_0+\xx,\grad\left\la\vv_0,\nn\right\ra\right\ra+\abs{A}^2\left\la\vv_0,\nn\right\ra=0 \quad\mbox{on $\tilde{\Sigma}$}.
\end{equation}
Note that by \eqref{eqn:lcurv}, $|A| \leq 2C^\prime$ on $\tilde{\Sigma}\cap B_1$. It follows from \cite[Theorem 8.17]{GT} that 
\begin{equation}\label{eqn:meanvalue}
\sup_{\tilde{\Sigma}\cap B_{\frac{1}{2}}}\abs{\left\la\vv_0,\nn\right\ra}^2<C^{\prime\prime}\int_{\tilde{\Sigma}\cap B_1}\left\la\vv_0,\nn\right\ra^2d\mathcal{H}^2
\end{equation}
for some $C^{\prime\prime}$ depending only on $C^\prime$. Hence \eqref{eqn:intsplit} and \eqref{eqn:meanvalue} imply that
\begin{equation}\label{eqn:splitl}
\sup_{\xx\in\Sigma\cap B_k(\lambda_i\vv_0)}\abs{\xx}^{-1}\abs{H(\xx)}\to 0, \quad\mbox{as $i\to\infty$}.
\end{equation}

Hence, together with \eqref{eqn:totalcurvl} and \eqref{eqn:splitl}, Theorem \ref{thm:sheeting} implies that for each $k>2$, there exist a positive integer $i_k$ and $0<r_k<1$ such that, if $i\geq i_k$, then for all $\xx_0\in\Sigma\cap B_{k/2}(\lambda_i\vv_0)$, the connected component of $\Sigma\cap B_{r_k}(\xx_0)$ containing $\xx_0$ is given by the graph of a function over a subset of $T_{\xx_0}\Sigma$ with gradient bounded by $1/k$. Therefore, letting $R_i=k/4$ for $i_k\leq i<i_{k+1}$, the theorem follows immediately from the Arzela-Ascoli theorem, the $L^2$ asymptotic splitting estimate \eqref{eqn:intsplit} and the classification \cite{AL} of $1$-dimensional self-shrinkers (see also \cite[Corollary 10.45]{CM2}).
\end{proof}

\begin{remark}
Alternatively, invoking the properness and finiteness of genus, one may use \cite[Lemma 4]{Si} of Simon and maximum principle (in a similar manner as in Appendix B) to prove a weaker version of Theorem \ref{thm:seqsheeting}, which implies that each component of $(\Sigma-\lambda_i\vv_0)\cap B_{R_i}$ converges with multiplicity one in the sense of varifolds to a plane or a self-shrinking cylinder. Nonetheless, notice that our proof of Theorems \ref{thm:sheeting} and \ref{thm:seqsheeting} is free from \cite[Lemma 4]{Si}.
\end{remark}

In the asymptotically cylindrical case, we use a topological argument to show that Theorem \ref{thm:seqsheeting} holds true for every sequence $\lambda_i\to\infty$. Furthermore, as self-shrinking cylinders are $\mathbf{F}$-unstable, it follows that the multiplicity of the limiting self-shrinking cylinder is equal to one, proving Corollary \ref{cor:cylinder}.

\begin{proof}[Proof of Corollary \ref{cor:cylinder}]
Without loss of generality, we assume that $\vv_0=\ee=(1,0,0)$. First Theorem \ref{thm:seqsheeting} implies that there exists an integer $N$ and two sequences $R_i\to\infty$ and $\lambda_i\to\infty$ with $\lambda_{i+1}/\lambda_i\to 1$ such that for each $i$, 
\begin{equation}
\Sigma\cap B_{R_i}(\lambda_i\ee)=\bigcup_{j=1}^N\Sigma_{i,j},
\end{equation}
where each $\Sigma_{i,j}$ is connected and $\Sigma_{i,j}\cap\Sigma_{i,k}=\emptyset$ if $j\neq k$. Moreover, for each $j$, $\Sigma_{i,j}-\lambda_i\ee$ converges in the locally $C^1$ topology to the self-shrinking cylinder $\mathcal{C}$ with axis parallel to $\ee$. Moreover, we may assume that $\lambda_{i+1}-\lambda_i\to\infty$ as $i\to\infty$.

Next we define 
\begin{equation}
V_r=\left\{\xx=(x_1,x_2,x_3)\in\RR^3: x_2^2+x_3^2<r^2\right\}\quad\mbox{for $r>0$}.
\end{equation}
Then by the Clearing-out Lemma of Brakke \cite{Ba}, given $R>2$, there exists a $\mathcal{X}$ sufficiently large such that the surface
\begin{equation}
\Sigma^\prime=\Sigma\cap V_R\cap\left\{x_1>\mathcal{X}\right\}
\end{equation}
satisfies $\partial\Sigma^\prime\subset\{x_1=\mathcal{X}\}$. We show that $\Sigma^\prime$ has a finite number of ends. 
 
For each $i$ and $j$, we define 
\begin{equation}
\gamma_{i,j}^{\mathcal{R}}=\Sigma_{i,j}\cap\left\{x_1=\lambda_i+R\right\}\quad\mbox{and}\quad\gamma_{i,j}^{\mathcal{L}}=\Sigma_{i,j}\cap\left\{x_1=\lambda_i-R\right\}.
\end{equation}
For all $i$ large, $\gamma_{i,j}^{\mathcal{R}}$ and $\gamma_{i,j}^{\mathcal{L}}$ are simple closed curves. Let us consider the surface
\begin{equation}
\Sigma_i=\Sigma^\prime\cap\left\{\lambda_i+R<x_1<\lambda_{i+1}-R\right\}.
\end{equation}
Then
\begin{equation}
\partial\Sigma_i=\bigcup_{j=1}^N\left(\gamma_{i,j}^{\mathcal{R}}\cup\gamma_{i+1,j}^{\mathcal{L}}\right).
\end{equation}
Given $\Sigma_{i,0}$ a connected component of $\Sigma_i$, assume that 
\begin{equation}
\partial\Sigma_{i,0}=\left(\bigcup_{k=1}^{K}\gamma_{i,j^\prime_k}^{\mathcal{R}}\right)\cup\left(\bigcup_{l=1}^{L}\gamma_{i+1,j_l}^{\mathcal{L}}\right).
\end{equation}
Thus by the local monotonicity formula \cite[Section 10]{Wh}, $K\geq L$ for all $i$ large. On the other hand, by \cite[Lemma 3.20]{CM2}, 
\begin{equation}
\lap x_1-\frac{1}{2}\left\la\xx,\grad x_1\right\ra+\frac{1}{2}x_1=0 \quad\mbox{on $\Sigma$}.
\end{equation}
Thus by the maximum principle, $\partial\Sigma_{i,0}$ cannot be contained in $\{x_1=\lambda_{i+1}-R\}$. Hence $K=L$ and this further implies that for $\mathcal{X}$ sufficiently large,
\begin{equation}
\Sigma^\prime=\bigcup_{j=1}^{N^\prime}\Sigma_j^\prime,\quad\Sigma_j^\prime\cap\Sigma_k^\prime=\emptyset\quad\mbox{if $j\neq k$},
\end{equation}
where for each $j$, both $\Sigma_j^\prime$ and $\Sigma_j^\prime\cap\{x_1>\lambda_i\}$ for all $\lambda_i>\mathcal{X}$ are connected.

For simplicity, let us assume that $N^\prime=1$ for this paragraph, i.e., $\Sigma^\prime=\Sigma_1^\prime$. Otherwise, apply the argument below to each $\Sigma_j^\prime$. If $N>1$, there exists an infinite subsequence $i_p$ of positive integers such that for each $p$, there is a connected component $\Sigma_{i_p,0}$ of $\Sigma_{i_p}$ satisfying that 
\begin{equation}
\partial\Sigma_{i_p,0}=\left(\bigcup_{k=1}^{K_p}\gamma^{\mathcal{R}}_{i_p,j^\prime_k}\right)\cup\left(\bigcup_{l=1}^{L_p}\gamma^{\mathcal{L}}_{i_p+1,j_l}\right)
\end{equation}
with $K_p=L_p\geq 2$. For $l=1,2$, take a point $\xx_l$ on $\gamma^{\mathcal{L}}_{i_p+1,j_l}$. Then $\xx_1$ and $\xx_2$ can be joined by an embedded curve $\gamma_p$ in $\Sigma_{i_p}$. On the other hand, as $\Sigma^\prime\cap\{x_1>\lambda_{i_p+1}\}$ is connected, points $\yy_1\in\gamma^{\mathcal{R}}_{i_p+1,j_1}$ and $\yy_2\in\gamma^{\mathcal{R}}_{i_p+1,j_2}$ can be joined by another embedded curve $\tilde{\gamma}_p$ in $\{x_1>\lambda_{i_p+1}+R\}$. Finally $\xx_l$ and $\yy_l$ for $l=1,2$ can be joined by an embedded curve 
\begin{equation}
\gamma_p^l\subset\Sigma_{i_p+1,j_l}\cap\left\{\lambda_{i_p+1}-R<x_1<\lambda_{i_p+1}+R\right\}.
\end{equation} 
Thus $\gamma_p\cup\gamma_p^1\cup\gamma_p^2\cup\tilde{\gamma}_p$ is a simple closed curve which does not separate $\Sigma$. This immediately implies that $\Sigma$ has infinite genus, giving a contradiction. 

Hence the local regularity theorem \cite{WhReg} implies that for $1\leq j\leq N^\prime$, as $\lambda\to\infty$, $(\Sigma_j^\prime-\lambda\ee)\cap B_R$ converges locally smoothly to $\mathcal{C}\cap B_R$ of multiplicity one.

To conclude the proof, we show that $N^\prime=1$. For each $j$, we define a family of surfaces by
\begin{equation}
\Sigma^\prime_{j,s}=\Sigma^\prime_j-\e^{\frac{s}{2}}\ee.
\end{equation}
Then $\{\Sigma^\prime_{j,s}\}_{s>0}$ is a normalized mean curvature flow, i.e., for $\xx\in\Sigma^\prime_{j,s}$,
\begin{equation}
\left\la\partial_s\xx,\nn\right\ra=-H+\frac{1}{2}\left\la\xx,\nn\right\ra
\end{equation}
Moreover there exists an $S$ sufficiently large such that $\{\Sigma^\prime_{j,s}\}_{s>S}$ in $B_{R/2}$ is given by the ${\rm graph}(u_j(\cdot,s))$ over $\mathcal{C}$ and $u_j(\cdot,s)\to 0$ in the locally $C^\infty$ topology as $s\to\infty$. 

Assume that $N^\prime>1$. Since $\Sigma$ is embedded, we order $u_j$ such that $u_1<\cdots<u_{N^\prime}$. Thus, letting $v=u_{N^\prime}-u_1$, $v>0$ satisfies 
\begin{equation}\label{eqn:linermcf}
\partial_sv=\lap v-\frac{1}{2}\left\la\xx,\grad v\right\ra+v+Q,
\end{equation}
where 
\begin{equation}
Q=\sum_{i,j=1}^2a_{ij}\partial_i\partial_jv+\sum_{i=1}^2b_i\partial_iv+cv,
\end{equation}
$a_{ij}, b_i, c$ are functions depending on $u_1$ and $u_{N^\prime}$, and $\partial_1$ and $\partial_2$ denote taking partial derivative with respect to $x_1$ and to the spherical coordinate $\theta$ on $\mathcal{C}$, respectively. We leave the derivation of \eqref{eqn:linermcf}, which is a small perturbation of the linearization of the normalized mean curvature flow on $\mathcal{C}$, to Appendix A. 

Let $\phi:\RR^3\to [0,1]$ be a cut-off function such that $\phi=1$ in $B_{R/4}$, $\phi=0$ outside $B_{R/2}$, and $|\grad\phi|<8/R$. For simplicity, we will omit $d\mathcal{H}^2$ in the estimates below. Multiplying \eqref{eqn:linermcf} by $v^{-1}\phi^2\e^{-|\xx|^2/4}$ and integrating over $\mathcal{C}$, it follows from integration by part and the Cauchy-Schwarz inequality that
\begin{equation}\label{eqn:timederive}
\begin{split}
\frac{d}{ds}\int_{\mathcal{C}}\log v\,\phi^2\e^{-\frac{\abs{\xx}^2}{4}}
%= & \int_{\mathcal{C}}\dive\left(\grad\log v \e^{-\frac{\abs{\xx}^2}{4}}\right)\phi^2+\int_{\mathcal{C}}v^{-1}Q\, \phi^2\e^{-\frac{\abs{\xx}^2}{4}}\\
%& +\int_{\mathcal{C}}\left(\abs{\grad\log v}^2+1\right)\phi^2\e^{-\frac{\abs{\xx}^2}{4}}\\
\geq & \int_{\mathcal{C}}\left(\phi^2-2\abs{\grad\phi}^2\right)\e^{-\frac{\abs{\xx}^2}{4}}+\int_{\mathcal{C}}v^{-1}Q\, \phi^2\e^{-\frac{\abs{\xx}^2}{4}}\\
& +\frac{1}{2}\int_{\mathcal{C}}\abs{\grad\log v}^2\phi^2\e^{-\frac{\abs{\xx}^2}{4}}.
\end{split}
\end{equation}
By a similar argument, we have that
\begin{equation}
\int_{\mathcal{C}}v^{-1}Q\, \phi^2\e^{-\frac{\abs{\xx}^2}{4}}\geq-\int_{\mathcal{C}}\left(B_0\abs{\grad\log v}^2\phi^2+B_1\phi^2+B_2\abs{\grad\phi}^2\right)\e^{-\frac{\abs{\xx}^2}{4}},
\end{equation}
where $B_0, B_1, B_2$ are positive functions with $B_0,B_1,B_2\to 0$ as $s\to\infty$, depending on $a_{ij}, \grad a_{ij}, b_i,c$ and $R$. Thus for $R,S$ sufficiently large, \eqref{eqn:timederive} gives 
\begin{equation}
\frac{d}{ds}\int_{\mathcal{C}}\log v\,\phi^2\e^{-\frac{\abs{\xx}^2}{4}}>0\quad\mbox{for all $s>S$},
\end{equation}
which contradicts that $v\to 0$ as $s\to\infty$. Therefore, by the arbitrariness of $R$, this justifies the multiplicity one as claimed in the corollary.
\end{proof}

In the end, invoking Ilmanen's local Gauss-Bonnet estimate again, the boundedness of the second fundamental form of the self-shrinkers in Theorem \ref{thm:bmc} is deduced from first applying Theorem \ref{thm:sheeting} followed by Lemmas \ref{lem:radext} and \ref{lem:imposcnormal}.

\begin{proof}[Proof of Theorem \ref{thm:bmc}]
Since $\Sigma$ is properly embedded with locally finite genus \eqref{eqn:genus} and bounded mean curvature \eqref{eqn:bmc}, it follows from Ilmanen's local Gauss-Bonnet estimate \cite[Theorem 3]{I1} and \cite[Theorem 1.1]{DX} that there exists a $C>0$ depending only on $g_0$, $\mathcal{H}_0$ and $\mathbf{F}[\Sigma]$ such that for all $\xx_0\in\RR^3$,
\begin{equation}\label{eqn:totalcurv2}
\int_{\Sigma\cap B_1(\xx_0)}\abs{A}^2d\mathcal{H}^2<C.
\end{equation}
Thus, applying Theorem \ref{thm:sheeting} for $\kappa_0=C$ and $\delta_0\in (0,1)$ defined by
\begin{equation}
\delta_0^2=\frac{4}{\left(2-\epsilon_2^2\right)^2}-1,
\end{equation}
there exist constants $r_0\in (0,1)$ and $R_0>0$, depending only on $C$, $\epsilon_2$ and $\mathcal{H}_0$, such that for all $\xx_0\in\Sigma$ with $|\xx_0|>R_0$, the connected component of $\Sigma\cap B_{r_0}(\xx_0)$ containing $\xx_0$ is given by the graph of a function over $T_{\xx_0}\Sigma$ with gradient bounded by $\delta_0$. By the definition of $\delta_0$, this implies 
\begin{equation}
\sup_{\substack{\xx\in\Sigma \\ d_\Sigma(\xx,\xx_0)<r_0}}\abs{\nn(\xx)-\nn(\xx_0)}<\epsilon_2.
\end{equation}
Hence, if $\xx_0\in\Sigma$ with 
\begin{equation}
\abs{\xx_0}>\max\left\{2R_0,\frac{2\mathcal{H}_0}{\epsilon_2},\frac{4}{\epsilon_2r_0},\frac{4}{\eta_0r_0}\right\}, 
\end{equation}
applying Lemmas \ref{lem:radext} and \ref{lem:imposcnormal} to $\Sigma\cap B_{r_1}(\xx_0)$ for $r_1=r_0^2|\xx_0|/16$, it follows that $|A(\xx_0)|\leq C^\prime$ for some $C^\prime$ depending only on $\eta_0$, $r_0$, $C_2$ and $C_3$. Therefore, in view of the smooth compactness theorem for self-shrinkers \cite[Theorem 0.2]{CM3} (see also \cite[Proposition 3.4]{BW}), we conclude that $|A|$ is uniformly bounded by a constant depending only on $g_0$, $\mathcal{H}_0$ and $\mathbf{F}[\Sigma]$.
\end{proof}

\appendix

\section{}
We give details of the derivation of the evolution equation \eqref{eqn:linermcf} for $v$. Let $\mathcal{C}$ be the self-shrinking cylinder with axis parallel to $(1,0,0)$. And let $(x,\theta)$ be the cylindrical coordinates on $\mathcal{C}$. Suppose that $u(x,\theta,s)$ is a function on a subset of $\mathcal{C}\times\RR$. We define the normal graph of $u(\cdot,s)$ over $\mathcal{C}$ by
\begin{equation}
\Psi(x,\theta,s)=\left(x,\left(u(x,\theta,s)+\sqrt{2}\right)\cos\theta,\left(u(x,\theta,s)+\sqrt{2}\right)\sin\theta\right).
\end{equation}

First we calculate the first derivatives of $\Psi$:
\begin{align}
\partial_s\Psi & =\partial_su\left(0,\cos\theta,\sin\theta\right),\\
\partial_x\Psi & =\left(1,\partial_xu\cos\theta,\partial_xu\sin\theta\right),\\
\partial_\theta\Psi & =\left(u+\sqrt{2}\right)\left(0,-\sin\theta,\cos\theta\right)+\partial_\theta u\left(0,\cos\theta,\sin\theta\right).
\end{align}
Thus the unit normal $\nn(x,\theta,s)$ is parallel to
\begin{equation}
\NN=\partial_x\Psi\times\partial_\theta\Psi=\left(u+\sqrt{2}\right)\left(\partial_xu,-\cos\theta,-\sin\theta\right)+\partial_\theta u\left(0,-\sin\theta,\cos\theta\right).
\end{equation}
And the inverse of the induced metric by $\Psi(\cdot,s)$ is given by
\begin{equation}
\mathbf{g}^{-1}=\frac{1}{G}\left(
\begin{array}{cc}
\abs{u+\sqrt{2}}^2+\abs{\partial_\theta u}^2 & -\partial_xu\, \partial_\theta u\\
-\partial_xu\, \partial_\theta u & 1+\abs{\partial_x u}^2
\end{array}
\right),
\end{equation}
where
\begin{equation}
G=\abs{\NN}^2=\left(u+\sqrt{2}\right)^2\left(1+\abs{\partial_xu}^2\right)+\abs{\partial_\theta u}^2.
\end{equation}

Furthermore we compute the second derivatives of $\Psi$:
\begin{align}
\partial_x^2\Psi& =\partial_x^2u\left(0,\cos\theta,\sin\theta\right),\\
\partial_x\partial_\theta\Psi & =\partial_x\partial_\theta u\left(0,\cos\theta,\sin\theta\right)+\partial_xu\left(0,-\sin\theta,\cos\theta\right),\\
\partial_\theta^2\Psi & =\left(\partial_\theta^2u-u-\sqrt{2}\right)\left(0,\cos\theta,\sin\theta\right)+2\partial_\theta u\left(0,-\sin\theta,\cos\theta\right).
\end{align}

If the family ${\rm graph}(u(\cdot,s))$ is a normalized mean curvature flow, then
\begin{equation}
\left\la\partial_s\Psi,\NN\right\ra=-H\abs{\NN}+\frac{1}{2}\left\la\Psi,\NN\right\ra.
\end{equation}
We compute and simplify each term in the above equation. Note that
\begin{equation}
-H\abs{\NN}={\rm trace}(\mathbf{g}^{-1}\mathbf{A}),
\end{equation}
where the matrix $\mathbf{A}$ is given by
\begin{equation}
\begin{split}
& \mathbf{A}=\left(
\begin{array}{cc}
\left\la\partial_x^2\Psi,\NN\right\ra & \left\la\partial_x\partial_\theta\Psi,\NN\right\ra \\
\left\la\partial_x\partial_\theta\Psi,\NN\right\ra & \left\la\partial_\theta^2\Psi,\NN\right\ra
\end{array}
\right)\\
= & \left(
\begin{array}{cc}
-(u+\sqrt{2})\partial_x^2u & -(u+\sqrt{2})\partial_x\partial_\theta u+\partial_xu\, \partial_\theta u\\
-(u+\sqrt{2})\partial_x\partial_\theta u+\partial_xu\, \partial_\theta u & (u+\sqrt{2})(u+\sqrt{2}-\partial_\theta^2u)+2\abs{\partial_\theta u}^2
\end{array}
\right).
\end{split}
\end{equation}
Thus a straightforward simplification gives that
\begin{equation}
-H\abs{\NN}=-\left(u+\sqrt{2}\right)\left(\partial_x^2u+\frac{\partial_\theta^2u}{2}+\frac{u}{2}-\frac{1}{\sqrt{2}}\right)+\frac{P}{G},
\end{equation}
where $P$ is an at least quadratic polynomial of $u$ and its derivatives up to second order. On the other hand, we have that
\begin{equation}
\left\la\partial_s\Psi,\NN\right\ra=-\left(u+\sqrt{2}\right)\partial_su,\quad\left\la\Psi,\NN\right\ra=-\left(u+\sqrt{2}\right)\left(u+\sqrt{2}-x\partial_xu\right).
\end{equation}
Hence the equation for $u$ is given by
\begin{equation}
\partial_su=\partial_x^2u+\frac{\partial_\theta^2u}{2}-\frac{x\partial_xu}{2}+u-\frac{P}{\left(u+\sqrt{2}\right)G},
\end{equation}
where the leading part is the linearization of normalized mean curvature flow on $\mathcal{C}$.

Therefore, substituting $u$ by $u_1$ and $u_{N^\prime}$, their difference gives the equation for $v$, that is,
\begin{equation}
\partial_sv=\lap v-\frac{1}{2}\left\la\xx,\grad v\right\ra+v+Q,
\end{equation}
where $\grad$ and $\lap$ are the gradient and Laplacian on $\mathcal{C}$ respectively,
\begin{equation}
Q=a_{11}\partial_x^2v+2a_{12}\partial_x\partial_\theta v+a_{22}\partial_{\theta}^2 v+b_1\partial_xv+b_2\partial_\theta v+cv,
\end{equation}
$a_{ij},b_i,c$ are functions depending on $u_1$, $u_{N^\prime}$ and their derivatives up to second order, and $a_{ij},b_i,c\to 0$ in the locally $C^1$ sense as $s\to\infty$.

\section{}
We follow closely the strategy of Song \cite{So} to show the linear growth of the second fundamental form for properly embedded self-shrinkers with finite genus. 

\begin{theorem}\label{thm:lcurv}
Suppose that $\Sigma\subset\RR^3$ is a complete noncompact properly embedded self-shrinker with finite genus. Then there exists a $C_0>0$ such that
\begin{equation}
\abs{A(\xx)}\leq C_0\left(1+\abs{\xx}\right)\quad\mbox{for all $\xx\in\Sigma$}.
\end{equation}
\end{theorem}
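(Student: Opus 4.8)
The plan is to prove Theorem~\ref{thm:lcurv} by a blow-up (point-picking) argument combined with Ilmanen's local Gauss-Bonnet estimate. Suppose the conclusion fails. Then there is a sequence of points $\xx_k\in\Sigma$ with
\begin{equation}
\abs{A(\xx_k)}\,(1+\abs{\xx_k})^{-1}\to\infty.
\end{equation}
Since $\Sigma$ is smooth and proper, $\abs{A}$ is bounded on every compact set, so necessarily $\abs{\xx_k}\to\infty$; in particular $\abs{A(\xx_k)}\to\infty$. We now run a Choi-Schoen-type point selection \emph{on the scale dictated by the soliton equation}. The key point is that for a self-shrinker, the geometrically natural radius at $\xx$ is comparable to $1/\abs{\xx}$: on $B_{1/\abs{\xx}}(\xx)$ the rescaled surface is (after translating and dilating by $\abs{\xx}$) essentially a minimal surface up to controlled lower-order terms, because the soliton term $\frac12\la\xx,\nn\ra$ contributes a mean curvature of size $O(1)$ after rescaling. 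So I would first use the linear-growth-free input — that $\Sigma$ is proper with $\mathbf F[\Sigma]<\infty$ — to get, via \cite[Theorem 1.1]{DX}, finite Gaussian area, and then via Ilmanen's estimate \cite[Theorem 3]{I1} together with the finite genus hypothesis that
\begin{equation}
\int_{\Sigma\cap B_1(\xx_0)}\abs{A}^2\,d\mathcal H^2 < C
\end{equation}
uniformly in $\xx_0\in\RR^3$, where $C$ depends only on $\mathbf F[\Sigma]$ and the genus bound.

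Next I would do the blow-up. Set $\lambda_k=\abs{A(\xx_k)}$ and consider $\Sigma_k=\lambda_k(\Sigma-\xx_k)$. These are self-shrinkers for the rescaled equation
\begin{equation}
H=\frac{1}{2\lambda_k^2}\la\lambda_k\xx_k+\yy,\nn\ra \qquad\text{on }\Sigma_k,
\end{equation}
with $\abs{A_{\Sigma_k}(\oo)}=1$. To promote this to a genuine curvature bound near $\oo$ on $\Sigma_k$, I would first replace $\xx_k$ by a point where a suitable weighted quantity like $(r_0-\dist(\cdot,\xx_k))^2\abs{A}^2$ (with $r_0\sim 1/\abs{\xx_k}$) is maximized, exactly as in the proof of Lemma~\ref{lem:slcurv}; by the contradiction hypothesis this maximum is large, which after rescaling forces $\abs{A_{\Sigma_k}}\le 2$ on $B_1(\oo)\subset\Sigma_k$ while $\abs{A_{\Sigma_k}(\oo)}\ge 1$. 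Crucially, because $\abs{A(\xx_k)}/\abs{\xx_k}\to\infty$, the rescaling is \emph{finer} than $1/\abs{\xx_k}$, so the coefficient $\frac1{2\lambda_k^2}\abs{\lambda_k\xx_k+\yy}$ in the rescaled soliton equation is $\sim\abs{\xx_k}/\lambda_k\to 0$ on $B_1(\oo)$; i.e.\ the rescaled surfaces converge to a solution of $H=0$, an honest minimal surface. Meanwhile the uniform total-curvature bound is scale-invariant, so $\int_{\Sigma_k\cap B_1(\oo)}\abs{A}^2<C$ with $C$ independent of $k$.

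By standard compactness for minimal surfaces with bounded $\abs{A}$ on $B_1$ and locally bounded area (area bounds come from the Gaussian area finiteness and the monotonicity formula for shrinkers, or directly from the total curvature bound via the Gauss equation plus Gauss-Bonnet), a subsequence of $\Sigma_k$ converges smoothly on $B_{1/2}(\oo)$ to a minimal surface $\Sigma_\infty$ with $\abs{A_{\Sigma_\infty}(\oo)}=1$ and $\int_{\Sigma_\infty\cap B_{1/2}}\abs{A}^2=0$. The vanishing of the total curvature forces $A_{\Sigma_\infty}\equiv 0$ on $B_{1/2}(\oo)$, contradicting $\abs{A_{\Sigma_\infty}(\oo)}=1$. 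This contradiction proves the theorem. The main obstacle I anticipate is making the point-selection and the ``parabolic'' rescaling interact correctly: one must choose the auxiliary cutoff radius $r_0$ comparable to $1/\abs{\xx_k}$ (so that the soliton equation is tame on $B_{r_0}(\xx_k)$ in the first place, à la Lemma~\ref{lem:slcurv}) and then verify that the contradiction hypothesis $\lambda_k\abs{\xx_k}^{-1}\to\infty$ really does make the rescaled soliton terms vanish \emph{uniformly} on $B_1(\oo)\subset\Sigma_k$ — this uses $\abs{\lambda_k\xx_k+\yy}\le\lambda_k\abs{\xx_k}+1$ for $\yy\in B_1(\oo)$ together with $\lambda_k^{-2}(\lambda_k\abs{\xx_k}+1)\le 2\abs{\xx_k}/\lambda_k\to 0$ — and that the limit surface is nonflat. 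Everything else (area bounds, the Choi-Schoen weighted maximum, smooth compactness) is routine once this scaling bookkeeping is set up, and Song's actual argument handles the genus/Gauss-Bonnet input in essentially this way.
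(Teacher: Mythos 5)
Your setup through the blow-up is sound and matches the paper's argument: reduce to bounding $(r_0-|\xx-\xx_0|)^2|A|^2$ on balls of radius $r_0=1/|\xx_0|$, do a Choi--Schoen point selection, rescale by $\lambda_k=|A(\yy_k)|$, note that the contradiction hypothesis makes the rescaled mean curvature tend to zero uniformly, and use the scale-invariant total curvature bound from Ilmanen's local Gauss--Bonnet estimate plus the area ratio bound to extract a smooth subsequential limit which is minimal with $|A(\oo)|=1$. The genuine gap is your final step: you claim $\int_{\Sigma_\infty\cap B_{1/2}}|A|^2=0$ and contradict $|A_{\Sigma_\infty}(\oo)|=1$. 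Nothing you have proved gives this vanishing. Scale invariance only yields $\int_{\Sigma_k\cap B_R(\oo)}|A|^2=\int_{\Sigma\cap B_{R/\lambda_k}(\yy_k)}|A|^2\leq\kappa$, a fixed finite constant; since the centers $\yy_k$ escape to infinity there is no absolute-continuity argument forcing these integrals to be small, and indeed they cannot be small in the contradiction scenario, because smooth convergence with $|A(\oo)|=1$ forces the limit to carry a definite amount of total curvature. A catenoid has total curvature $8\pi$, perfectly compatible with the bound $\kappa$, so no contradiction arises at this stage; the possibility you must exclude is exactly that the blow-up limit is a catenoid, and ruling that out is the actual content of the theorem.

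This is where the paper (following Song) does the real work: the blow-up limit is shown to be a complete properly embedded nonflat minimal surface of genus zero and finite total curvature, hence a catenoid by the L\'{o}pez--Ros classification, and the catenoid is then excluded by self-shrinker maximum principles combined with a topological counting argument. Concretely, the neck curves $\gamma_i$ produced at infinitely many blow-up scales would separate a genus-zero end $\bar{\Sigma}$ into $L+1$ components; Lemma \ref{lem:maximum} (a sliding-barrier comparison with the self-shrinking sphere inside the solid half-cylinders $\mathcal{C}(\vv_0)$), in the case where the blow-up points go to infinity parallel to the catenoid axis, or the maximum principle applied to $\lap|\xx|^2-\frac{1}{2}\la\xx,\grad|\xx|^2\ra+|\xx|^2=4$ in the remaining case, forces each such component to meet a fixed ball $B_{2R}$, contradicting that $\bar{\Sigma}\cap B_{2R}$ has only $L$ components. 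Without an argument of this kind your proof cannot be completed; everything before it is correct but is also the routine part.
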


\begin{remark}
In \cite[Theorem 19]{So} some curvature concentration bound of $\Sigma$ is assumed. However, this can be removed by the assumptions on $\Sigma$ and Ilmanen's local Gauss-Bonnet estimate. 
\end{remark}

The rest of this appendix is devoted to proving Theorem \ref{thm:lcurv}. First we show a maximum principle for self-shrinkers, which is a special case of \cite[Proposition 6]{So}. 

Given a unit vector $\vv_0$, we define
\begin{equation}
\mathcal{C}\left(\vv_0\right)=\left\{\xx\in\RR^3: \dist(\xx,\spa(\vv_0))\leq 2,\left\la\xx,\vv_0\right\ra\geq 0\right\},
\end{equation}
and $\PP(\vv_0)$ the plane through $\oo$ with normal $\vv_0$. And we denote by $\mathcal{S}$ the self-shrinking sphere in $\RR^3$.

\begin{lemma}\label{lem:maximum}
Given $\vv_0\in\partial B_1$, suppose that $\Sigma$ is a properly embedded self-shrinker in $\mathcal{C}(\vv_0)$ with $\partial\Sigma\subset\partial\mathcal{C}(\vv_0)$ which satisfies that $\Sigma$ intersects $\partial\mathcal{C}(\vv_0)\setminus\PP(\vv_0)$ transversally, then $\Sigma\cap\bar{B}_2\neq\emptyset$.
\end{lemma}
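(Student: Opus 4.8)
The plan is to assume $\Sigma\cap\bar B_2=\emptyset$ and derive a contradiction. The first thing one tries is the maximum principle for $\abs{\xx}^2$: on any self-shrinker $\lap\abs{\xx}^2-\tfrac12\la\xx,\grad\abs{\xx}^2\ra=4-\abs{\xx}^2$ (a direct computation; cf.\ \cite[Lemma 3.20]{CM2}), so at an \emph{interior} minimum of $\abs{\xx}$ on $\Sigma$ the left side is $\ge0$ and hence $\abs{\xx}\le2$; the obstruction is that this minimum may sit on $\partial\Sigma\subset\partial\mathcal{C}(\vv_0)$. To treat all cases uniformly I would instead run a moving-plane argument, using that every plane through the origin is itself a self-shrinker.

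First I would record that, since $\Sigma\subset\mathcal{C}(\vv_0)$ forces $\dist(\xx,\spa(\vv_0))\le2$, on $\Sigma$ one has $\abs{\xx}^2=\dist(\xx,\spa(\vv_0))^2+\la\xx,\vv_0\ra^2\le4+\la\xx,\vv_0\ra^2$; combined with $\abs{\xx}>2$ this forces $\la\xx,\vv_0\ra>0$ everywhere on $\Sigma$, so $\PP(\vv_0)$ is disjoint from $\Sigma$ and, by properness, $\inf_\Sigma\la\xx,\vv_0\ra>0$. For a unit vector $\mathbf{w}$ set $P_{\mathbf{w}}=\{\xx:\la\xx,\mathbf{w}\ra=0\}$, a self-shrinker. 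When $\la\mathbf{w},\vv_0\ra>0$ the region $P_{\mathbf{w}}\cap\mathcal{C}(\vv_0)$ is bounded (contained in $\bar B_{2/\la\mathbf{w},\vv_0\ra}$), so properness of $\Sigma$ makes $G=\{\mathbf{w}:\la\mathbf{w},\vv_0\ra>0,\ P_{\mathbf{w}}\cap\Sigma=\emptyset\}$ open; let $\mathcal{A}$ be the connected component of $G$ containing $\vv_0$ (which lies in $G$, with a whole neighborhood, by the above). Along $\mathcal{A}$ the sign of $\la\cdot,\mathbf{w}\ra$ on each component of $\Sigma$ stays constant, and it is positive at $\vv_0$, so $\Sigma\subset\{\la\xx,\mathbf{w}\ra>0\}$ for every $\mathbf{w}\in\mathcal{A}$.

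Then I would observe that $\mathcal{A}$ is not the whole hemisphere $\{\la\mathbf{w},\vv_0\ra>0\}$: otherwise $\Sigma$ would avoid $P_{\mathbf{w}}$ for every such $\mathbf{w}$, i.e.\ $\Sigma\subset\spa(\vv_0)$, impossible for a surface. Hence $\mathcal{A}$ is a proper nonempty open subset of the connected hemisphere and has a boundary point $\mathbf{w}^\ast$ there; then $\la\mathbf{w}^\ast,\vv_0\ra\in(0,1)$, $\Sigma\subset\{\la\xx,\mathbf{w}^\ast\ra\ge0\}$, and $\la\mathbf{z},\mathbf{w}^\ast\ra=0$ for some $\mathbf{z}\in\Sigma$. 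Since $\mathbf{z}$ lies on the open surface $\Sigma$ it is an interior point, and $\la\cdot,\mathbf{w}^\ast\ra$ has an interior minimum there, so $T_{\mathbf{z}}\Sigma=P_{\mathbf{w}^\ast}$; thus the self-shrinkers $\Sigma$ and $P_{\mathbf{w}^\ast}$ are tangent at $\mathbf{z}$ with $\Sigma$ on one side. By the strong maximum principle for the self-shrinker equation they coincide near $\mathbf{z}$, and since self-shrinkers are real-analytic the connected component $\Sigma_0$ of $\Sigma$ containing $\mathbf{z}$ lies in the plane $P_{\mathbf{w}^\ast}$. By the transversality hypothesis, together with $\la\xx,\vv_0\ra>0$ (which keeps $\Sigma$ off the flat face $\PP(\vv_0)$ of $\partial\mathcal{C}(\vv_0)$), the surface $\Sigma$ lies in the interior of $\mathcal{C}(\vv_0)$; hence $\Sigma_0$ is open in $P_{\mathbf{w}^\ast}$, closed in the interior of $\mathcal{C}(\vv_0)$ (its remaining limit points lie on $\partial\Sigma_0\subset\partial\mathcal{C}(\vv_0)$), and contained in the convex, hence connected, set $P_{\mathbf{w}^\ast}\cap\operatorname{int}\mathcal{C}(\vv_0)$; so $\Sigma_0$ equals that set. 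But that set contains points arbitrarily close to $\oo$, so $\Sigma_0$ meets $B_2$, contradicting $\Sigma\cap\bar B_2=\emptyset$.

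The step I expect to be the main obstacle is arranging that the first contact of the sliding plane occurs at an interior point of $\Sigma$ rather than on $\partial\Sigma$; this is exactly why $\mathcal{A}$ is built only from disjointness with the open surface $\Sigma$ (disjointness from $\partial\Sigma$ is never required). The transversality assumption is used to make $\Sigma$ an honest manifold-with-boundary meeting $\partial\mathcal{C}(\vv_0)$ cleanly — so that $\Sigma\subset\operatorname{int}\mathcal{C}(\vv_0)$ and the closure and connected-component arguments are legitimate — while the strong maximum principle together with unique continuation for self-shrinkers is the one substantial analytic ingredient.
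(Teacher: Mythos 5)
Your rotating-planes scheme is genuinely different from the paper's argument (which slides the self-shrinking sphere in the $\vv_0$-direction and applies the maximum principle to the height separation over $\PP(\vv_0)$), but as written it has a real gap: you never exclude the possibility that the first contact of the rotating plane with $\Sigma$ occurs only along $\partial\Sigma$. The openness of $G=\{\mathbf{w}:\la\mathbf{w},\vv_0\ra>0,\ P_{\mathbf{w}}\cap\Sigma=\emptyset\}$ does not follow from properness: properness makes $\Sigma\cup\partial\Sigma$ closed, not the open surface $\Sigma$ alone, so a plane that misses $\Sigma$ but touches $\partial\Sigma$ (which lies on $\partial\mathcal{C}(\vv_0)$ and is certainly within reach of planes through $\oo$ with $0<\la\mathbf{w},\vv_0\ra<1$) may belong to $G$ without being an interior point of $G$. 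Consequently the boundary point $\mathbf{w}^\ast$ of $\mathcal{A}$ may itself lie in $G$, and then there is no interior contact point $\mathbf{z}$ at all. Redefining $G$ using $\Sigma\cup\partial\Sigma$ restores openness but only relocates the problem: at $\mathbf{w}^\ast$ the contact point may be a point of $\partial\Sigma$, where the interior strong maximum principle is unavailable and the Hopf boundary point lemma yields no contradiction, because at a boundary minimum of $f=\la\xx,\mathbf{w}^\ast\ra\geq 0$ the inward derivative is allowed to be strictly positive --- the plane simply rests against the boundary curve and the rotation stalls with no conclusion. Your parenthetical remark that ``disjointness from $\partial\Sigma$ is never required'' is exactly the step that is not justified.

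Note also that the transversality hypothesis cannot repair this within your framework: it controls the angle between $\Sigma$ and the lateral part of $\partial\mathcal{C}(\vv_0)$, not the position of $\partial\Sigma$ relative to planes through the origin, so it does not prevent first contact along $\partial\Sigma$. In the paper's proof, by contrast, transversality is used precisely at the analogous point: the comparison surface is the self-shrinking sphere $\mathcal{S}$, the quantity minimized is the $\vv_0$-height separation $h(\xx)=\min\{\la\yy-\xx,\vv_0\ra:\yy\in\Sigma,\ \Pi(\yy)=\Pi(\xx)\}$ over $\mathcal{S}\cap\mathcal{C}(\vv_0)$, and transversality together with properness forces the minimum of $h$ to be attained at an interior point, after which the interior maximum principle for the graphical self-shrinker equation gives the contradiction. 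The remaining ingredients of your write-up (the positivity of $\la\xx,\vv_0\ra$ on $\Sigma$, constancy of the sign along $\mathcal{A}$, the fact that $\mathcal{A}$ is not the whole hemisphere, and the unique-continuation step producing a planar component passing near $\oo$) are fine, but without excluding boundary contact the proof is incomplete; either supply an argument ruling out contact at $\partial\Sigma$ or switch to a comparison, as in the paper, in which transversality genuinely forces interior contact.
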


\begin{proof}
We argue by contraction. Assume that $\Sigma\cap\bar{B}_2=\emptyset$. First we define the function $h:\mathcal{S}\cap\mathcal{C}(\vv_0)\to\RR$ by
\begin{equation}
h\left(\xx\right)=\min\left\{\left\la\yy-\xx,\vv_0\right\ra: \yy\in\Sigma, \Pi(\yy)=\Pi(\xx)\right\},
\end{equation}
where $\Pi$ is the orthogonal projection onto $\PP(\vv_0)$. In our convention, $h(\xx)=\infty$ if there does not exist $\yy\in\Sigma$ with $\Pi(\yy)=\Pi(\xx)$. Thus $h>0$ is well-defined by the properness of $\Sigma$ and our assumption.

Since $\Sigma$ intersects $\partial\mathcal{C}(\vv_0)\setminus\PP(\vv_0)$ transversally, it follows from the properness of $\Sigma$ that $h$ achieves its minimum at some interior point $\xx_0$ with $h(\xx_0)=\la\yy_0-\xx_0,\vv_0\ra$ for some $\yy_0\in\Sigma$ and $\Pi(\yy_0)=\Pi(\xx_0)$. Thus, in intrinsic neighborhoods of $\xx_0$ and $\yy_0$, $\mathcal{S}$ and $\Sigma$ can be written as graphs of functions $f_{-}$ and $f_{+}$, respectively, over $\PP(\vv_0)$, and the difference $f_{+}-f_{-}$ has a local minimum $h(\xx_0)$ at $\Pi(\xx_0)$. However, both $f_{+}$ and $f_{-}$ satisfy the graphical self-shrinker equation and thus by the maximum principle for elliptic equations, the difference $f_{+}-f_{-}$ does not have any positive interior local minimum. This is a contradiction, proving the lemma. 
\end{proof}

In what follows, we use blow-up techniques and maximum principles to prove Theorem \ref{thm:lcurv} by contradiction.

\begin{proof}[Proof of Theorem \ref{thm:lcurv}]
It suffices to prove that there exists a $C>0$ such that for every $\xx_0\in\RR^3\setminus B_1$, letting $r_0=1/|\xx_0|$,
\begin{equation}
\sup_{\xx\in\Sigma\cap B_{r_0}(\xx_0)}\left(r_0-\abs{\xx-\xx_0}\right)^2\abs{A(\xx)}^2\leq C.
\end{equation}

We will argue by contradiction. Namely, if not, by a similar argument as the proof of Lemma \ref{lem:slcurv}, there exist two sequences $\xx_i,\yy_i$ of points in $\RR^3$ with $|\xx_i|\to\infty$ and $\yy_i\in\Sigma\cap B_{r_i}(\xx_i)$ for $r_i=1/|\xx_i|$ such that for $\sigma_i=(r_i-|\xx_i-\yy_i|)/2$,
\begin{equation}
\sigma_i\abs{A(\yy_i)}\to\infty\quad\mbox{and}\quad\sup_{\xx\in\Sigma\cap B_{\sigma_i}(\yy_i)}\abs{A(\xx)}\leq 2\abs{A(\yy_i)}.
\end{equation}
Let us define $\tilde{\Sigma}_i=|A(\yy_i)|(\Sigma-\yy_i)$. Then 
\begin{equation}\label{eqn:curv}
\abs{A(\oo)}=1\quad\mbox{and}\quad\abs{A}\leq 2\quad\mbox{on $\tilde{\Sigma}_i\cap B_{\tilde{R}_i}$},
\end{equation} 
where $\tilde{R}_i=\sigma_i|A(\yy_i)|\to\infty$. On the other hand, as $\sigma_i|\xx_i|<1$, $|\xx_i|/|A(\yy_i)|\to 0$, and it follows from the self-shrinker equation \eqref{eqn:shrinker} that
\begin{equation}\label{eqn:mczero}
\sup_{\xx\in\tilde{\Sigma}_i\cap B_{\tilde{R}_i}}\abs{H(\xx)}\to 0.
\end{equation}

Since $\Sigma$ is properly embedded, by \cite[Theorem 1.1]{DX} and \cite[Lemma 7.10]{CM2}, there exists a $C^\prime>0$ depending only on $\mathbf{F}[\Sigma]$ such that
\begin{equation}\label{eqn:arearatio}
\sup_{\xx\in\RR^3,r>0}r^{-2}{\rm Area}(\Sigma\cap B_r(\xx))\leq C^\prime
\end{equation}
Moreover, together with the genus bound of $\Sigma$ and the self-shrinker equation \eqref{eqn:shrinker}, the local Gauss-Bonnet estimate \cite[Theorem 3]{I1} implies that for all $\xx_0\in\RR^3\setminus B_1$ and $r_0=1/|\xx_0|$,
\begin{equation}
\int_{\Sigma\cap B_{r_0}(\xx_0)}\abs{A}^2d\mathcal{H}^2\leq\kappa,
\end{equation}
where constant $\kappa$ depends only on $C^\prime$ and the genus bound of $\Sigma$. Thus by the scaling invariance of the total curvature of surfaces, we have that
\begin{equation}\label{eqn:totalcurv3}
\int_{\tilde{\Sigma}_i\cap B_{\tilde{R}_i}}\abs{A}^2d\mathcal{H}^2\leq\kappa\quad\mbox{for all $i$}.
\end{equation}

Hence, combining \eqref{eqn:curv}, \eqref{eqn:mczero}, \eqref{eqn:arearatio} and \eqref{eqn:totalcurv3}, it follows from the elliptic regularity theory and the Arzela-Ascoli theorem that, passing to a subsequence and relabeling, the connected component $\tilde{\Sigma}_{i,0}$ of $\tilde{\Sigma}_{i}\cap B_{\tilde{R}_i}$ containing $\oo$ converges locally smoothly to a complete properly embedded nonflat minimal surface $\mathcal{M}$ of genus $0$ and finite total curvature. Therefore, appealing to the classification theorem \cite{LR}, we conclude that $\mathcal{M}$ must be a Catenoid which, without loss of generality, rotates around the straight line of direction $\ee=(1,0,0)$.

To conclude the proof, we use maximum principles for self-shrinkers to rule out the Catenoid. Let $\bar{\Sigma}$ be a connected component of $\Sigma\setminus B_{R}$ such that $\bar{\Sigma}$ has zero genus and, passing to a subsequence and relabeling, the $\yy_i$ are in $\bar{\Sigma}$ and $\yy_i/|\yy_i|\to\vv$ for some $\vv\in\partial B_1$. And let $\gamma_i$ be the simple closed curve in $\Sigma$ such that the nearest points projection of the curve $\tilde{\gamma}_i=|A(\yy_i)|(\gamma_i-\yy_i)$ onto $\mathcal{M}$ is the geodesic simple loop which encircles the neck of $\mathcal{M}$. Since $\Sigma$ is proper, so is $\bar{\Sigma}$ and thus $\bar{\Sigma}\cap B_{2R}$ has a finite number $L$ connected components.  

Let us first consider the case that $|\la\vv,\ee\ra|=1$. Note that $\gamma_i$ for each $i$ large is a convex curve in a plane normal to $\ee$ which separates $\bar{\Sigma}$. And for $i$ large, $\yy_i$ is almost parallel to $\ee$ and $\tilde{\Sigma}_{i,0}$ can be written as the small normal graph over the Catenoid $\mathcal{M}$. Thus by elementary geometry and the Sard theorem, there exist $N,\tilde{R}$ sufficiently large such that if $i>N$, there exist $L$ distinct unit vectors $\vv_{i,1},\dots,\vv_{i,L}$ such that for each $1\leq j\leq L$, $\Sigma$ intersects $\partial\mathcal{C}(\vv_{i,j})\setminus\PP(\vv_{i,j})$ transversally, $\mathcal{C}(\vv_{i,j})\cap\gamma_i=\emptyset$, and $\partial\mathcal{C}(\vv_{i,j})$ intersects both connected components $\Sigma_{i}^{+},\Sigma_{i}^{-}$ of $\Sigma_i\setminus\gamma_i$, where
\begin{equation}
\Sigma_i=\abs{A(\yy_i)}^{-1}\left(\tilde{\Sigma}_{i,0}\cap B_{\tilde{R}}\right)+\yy_i.
\end{equation}
Moreover, for each $i>N$, there exists $R_i>|\yy_i|$ such that if $1\leq j<k\leq L$, the distance between $\mathcal{C}(\vv_{i,j})\setminus B_{R_i}$ and $\mathcal{C}(\vv_{i,k})\setminus B_{R_i}$ is at least $1$. 

Now we choose $i_l>N$ for $1\leq l\leq L$ such that $|\yy_{i_{l+1}}|>R_{i_l}$. Observe that for each $l$, there is an integer $j_l\in [1,L]$ such that $\mathcal{C}(\vv_{i_l,j_l})\cap\gamma_{i_k}=\emptyset$ for all $k\geq l$. As $\bar{\Sigma}$ has zero genus, $\bar{\Sigma}\setminus(\bigcup_{l=1}^L\gamma_{i_l})$ has $L+1$ connected components, each of which must intersect $B_{2R}$ by Lemma \ref{lem:maximum}. This implies that $\bar{\Sigma}\cap B_{2R}$ has at least $L+1$ connected components, giving a contradiction. 

Next we deal with the case that $|\la\vv,\ee\ra|<1$. By our assumptions, there exist $N^\prime,\tilde{R}^\prime$ sufficiently large such that if $i>N^\prime$, there is a half-space $\HH_i$ with outward unit normal $\yy_i/|\yy_i|$ to $\partial\HH_i$ which satisfies that $\HH_i\cap\tilde{\gamma}_i=\emptyset$ and $\HH_i$ intersects both connected components of $(\tilde{\Sigma}_{i,0}\cap B_{\tilde{R}^\prime})\setminus\tilde{\gamma}_i$. Here we use that any Catenoid has two parallel planar ends. Note that if $i>N^\prime$ for $N^\prime$ sufficiently large, $\partial B_r\cap B_{1/|\yy_i|}(\yy_i)$ for $|r-|\yy_i||<1/|\yy_i|$ are well approximated by planes normal to $\yy_i/|\yy_i|$. Thus, as $\bar{\Sigma}$ has zero genus, $\bar{\Sigma}\setminus (\bigcup_{l=1}^L\gamma_{N^\prime+l})$ has $L+1$ connected components, on each of which $|\xx|^2$ achieves its minimum away from $\bigcup_{l=1}^L\gamma_{N^\prime+l}$. On the other hand, 
\begin{equation}
\lap\abs{\xx}^2-\frac{1}{2}\left\la\xx,\grad\abs{\xx}^2\right\ra+\abs{\xx}^2=4\quad\mbox{on $\Sigma$}.
\end{equation}
Thus the maximum principle implies that $|\xx|^2$ has no local minimum in the interior of $\bar{\Sigma}$. Hence each connected component of $\bar{\Sigma}\setminus (\bigcup_{l=1}^L\gamma_{N^\prime+l})$ must intersect $B_{2R}$, from which it follows that $\bar{\Sigma}\cap B_{2R}$ has at least $L+1$ connected components. This is a contradiction. 
\end{proof}


\begin{thebibliography}{99}

\bibitem{AL} U. Abresch and J. Langer, \emph{The normalized curve shortening flow and homothetic solutions}, J. Differential Geom. 23 (1986), 175--196.

\bibitem{BW} J. Bernstein and L. Wang, \emph{A Remark on a uniqueness property of high multiplicity tangent flows in dimension $3$}, Int. Math. Res. Notices. (2014), to appear, arXiv: 1402.6687.

\bibitem{Ba} K. Brakke, The motion of a surface by its mean curvature, Mathematical Notes, 20, Princeton University Press, Princeton, N.J., 1978. 

\bibitem{Be} S. Brendle, \emph{Embedded self-similar shrinkers of genus $0$}, Preprint (2014), arXiv: 1411.4640.

\bibitem{CZ} X. Cheng and D. Zhou, \emph{Volume estimate about shrinkers}, Proc. Amer. Math. Soc. 141 (2013), no. 2, 687--696.

\bibitem{CS} H.I. Choi and R. Schoen, \emph{The space of minimal embeddings of a surface into a three-dimensional manifold of positive Ricci curvature}, Invent. Math. 81 (1985), no. 3, 387--394.

\bibitem{Ch} D. Chopp, \emph{Computation of self-similar solutions for mean curvature flow}, Experiment. Math. 3 (1994), no. 1, 1--15.

\bibitem{CIM} T.H. Colding, T. Ilmanen, and W.P. Minicozzi II, \emph{Rigidity of generic singularities of mean curvature flow}, Publications IHES (2015), to appear, arXiv: 1304.6356.

\bibitem{CM1} T.H. Colding and W.P. Minicozzi II, \emph{Sharp estimates for mean curvature flow of graphs}, J. Reine Angew. Math. 574 (2004), 187--195.

\bibitem{CM2} T.H. Colding and W.P. Minicozzi II, \emph{Generic mean curvature flow I: generic singularities}, Ann. of Math. (2) 175 (2012), no. 2, 755--833. 

\bibitem{CM3} T.H. Colding and W.P. Minicozzi II, \emph{Smooth compactness of self-shrinkers}, Comment. Math. Helv. 87 (2012), no. 2, 463--475. 

\bibitem{DX} Q. Ding and Y.L. Xin, \emph{Volume growth, eigenvalue and compactness for self-shrinkers}, Asian J. Math. 17 (2013), no. 3, 443--456.

\bibitem{EH1} K. Ecker and G. Huisken, \emph{Mean curvature evolution of entire graphs}, Ann. of Math. (2) 130 (1989), no. 3, 453--471. 

\bibitem{EH2} K. Esker and G. Huisken, \emph{Interior estimates for hypersurfaces moving by mean curvature}, Invent. Math. 105 (1991), no. 3, 547--569.

\bibitem{GT} D. Gilbarg and N.S. Trudinger, Elliptic partial differential equations of second order, Reprint of the 1998 edition, Classics in Mathematics, Springer-Verlag, Berlin, 2001.

\bibitem{H} G. Huisken, \emph{Local and global behaviour of hypersurfaces moving by mean curvature}, Differential geometry: partial differential equations on manifolds (Los Angeles, CA, 1990), 175--191, Proc. Sympos. Pure Math., 54, Part 1, Amer. Math. Soc., Providence, RI, 1993.

\bibitem{I1} T. Ilmanen, \emph{Singularities of mean curvature flow of surfaces}, Preprint (1995).

\bibitem{I2} T. Ilmanen, \emph{Lectures on mean curvature flow and related equations}, Preprint (1998).

\bibitem{KKM} N. Kapouleas, S.J. Kleene, and N.M. M{\o}ller, \emph{Mean curvature self-shrinkers of high genus: Non-compact examples}, J. Reine Angew. Math. (2012), to appear, arXiv: 1106.5454.

\bibitem{KM} S.J. Kleene and N.M. M{\o}ller, \emph{Self-shrinkers with a rotational symmetry}, Trans. Amer. Math. Soc. 366 (2014), no. 8, 3943--3963.

\bibitem{LR} F.J. L\'{o}pez and A. Ros, \emph{On embedded minimal surfaces of genus zero}, J. Differential Geom. 33 (1991), no. 1, 293--300.

\bibitem{MW} O. Munteanu and J. Wang, \emph{Geometry of shrinking Ricci solitons}, Compos. Math. (2015), to appear, arXiv: 1410.3813.
 
\bibitem{Ng} X.H. Nguyen, \emph{Construction of complete embedded self-similar surfaces under mean curvature flow, part III}, Duke Math. J. 163 (2014), no. 11, 2023--2056.

\bibitem{Si} L. Simon, \emph{Existence of surfaces minimizing the Willmore functional}, Comm. Anal. Geom. 1 (1993), no. 2, 281--326.

\bibitem{So} A. Song, \emph{A maximum principle for self-shrinkers and some consequences}, Preprint (2014), arXiv: 1412.4755.

\bibitem{Wa1} L. Wang, \emph{A Bernstein type theorem for self-similar shrinkers}, Geom. Dedicata 151 (2011), 297--303.

\bibitem{Wa2} L. Wang, \emph{Uniqueness of self-similar shrinkers with asymptotically conical ends}, J. Amer. Math. Soc. 27 (2014), no. 3, 613--638. 

\bibitem{Wa3} L. Wang,\emph{Uniqueness of self-similar shrinkers with asymptotically cylindrical ends}, J. Reine Angew. Math. (2014), to appear.

\bibitem{Wh} B. White, \emph{Stratification of minimal surfaces, mean curvature flows, and harmonic maps}, J. Reine Angew. Math. 488 (1997), 1--35.

\bibitem{WhReg} B. White, \emph{A local regularity theorem for mean curvature flow}, Ann. of Math. (2) 161 (2005), no. 3, 1487--1519.

\end{thebibliography}
\end{document}